\newcommand{\Rmnum}[1]{\uppercase\expandafter{\romannumeral #1}} 
\numberwithin{equation}{section}
\newtheorem{Lemma}{Lemma}[section]
\newtheorem{Theorem}{Theorem}[section]
\newtheorem{Remark}{Remark}[section]
\newcounter{saveeqn}
\def\@maketitle{%
	\newpage
	\null
	\vskip 2em%
	\begin{center}%
		\let \footnote \thanks
		{\LARGE \@title \par}%
		\vskip 1.5em%
		{\large
			\lineskip .5em%
			\begin{tabular}[t]{c}%
				\@author
			\end{tabular}\par}%
	\end{center}%
	\par
	\vskip 1.5em}
\title{\bf  Convergence analysis of decoupled mixed FEM for the Cahn-Hilliard-Navier-Stokes equations\thanks{This research is supported by the National Natural Science Foundation of China (No.11971337) and Natural Science Foundation of Sichuan Province (No. 2025ZNSFSC0070).}}
\author{Haijun Gao$^{1}$, Xi Li$^{2}$, and Minfu Feng\footnote{Corresponding author. e-mail:~gaohijun@163.com,~lixi@cdut.edu.cn,~fmf@scu.edu.cn.}}
\affil[1]{\small School of Mathematics, Sichuan University, Chengdu, Sichuan 610064, China}
\affil[2]{School of Mathematics Sciences, Chengdu University of Technology, Chengdu, Sichuan 610059, China}
\begin{document}
	\maketitle
	\newcommand\blfootnote[1]{%
		\begingroup
		\renewcommand\thefootnote{}\footnote{#1}%
		\par\setlength\parindent{2em}
		\endgroup
	}
	\captionsetup[figure]{labelfont={bf},labelformat={default},labelsep=period,name={Figure}}
	\captionsetup[table]{labelfont={bf},labelformat={default},labelsep=period,name={Table}}
	\begin{abstract}
		We develop a decoupled, first-order, fully discrete, energy-stable scheme for the Cahn-Hilliard-Navier-Stokes equations. This scheme calculates the Cahn-Hilliard and Navier-Stokes equations separately, thus effectively decoupling the entire system. To further separate the velocity and pressure components in the Navier-Stokes equations, we use the pressure-correction projection method. We demonstrate that the scheme is primitively energy stable and prove the optimal $L^2$ error estimate of the fully discrete scheme in the $P_r\times P_r\times P_r\times P_{r-1}$ finite element spaces, where the phase field, chemical potential, velocity and pressure satisfy the first-order accuracy in time and the $\left(r+1,r+1,r+1,r\right)th$-order accuracy in space, respectively.  Furthermore, numerical experiments are conducted to support these theoretical findings. Notably, compared to other numerical schemes, our algorithm is more time-efficient and numerically shown to be unconditionally stable.
		\\	
		
		\noindent {\bf Keywords: }{Phase field models; Cahn-Hilliard-Navier-Stokes; convex-splitting; energy stability; optimal error estimates.}\\
	\end{abstract}
	\baselineskip 15pt
	\parskip 10pt
	\setcounter{page}{1}
	\vspace{-0.5cm}
	\section{Introduction}
	The Cahn-Hilliard-Navier-Stokes (CHNS) model features a nonlinear interaction between the incompressible Navier-Stokes (NS) equations \cite{Temam_Roger_Navier_Stokes_equations_Theory_and_numerical_analysis} and the Cahn-Hilliard (CH) equations \cite{1958_Cahn_Hilliard_Free_Energy_of_a_Nonuniform_System_I_Interfacial_Free_Energy}. This model captures the interfacial dynamics of two-phase, incompressible, and macroscopically immiscible Newtonian fluids that exhibit matched densities \cite{2007_Kay_David_and_Welford_Richard_Efficient_numerical_solution_of_Cahn_Hilliard_Navier_Stokes_fluids_in_2D,
		2008_Kay_David_and_Welford_Richard_Finite_element_approximation_of_a_Cahn_Hilliard_Navier_Stokes_system} as follows:
	\begin{subequations}\label{eq_chns_equations}
		\begin{align}
			\label{eq_phi_con}
			\frac{\partial \phi }{\partial t}+(\mathbf{u}\cdot\nabla)\phi- M\Delta \mu=0,\quad \text{in} \quad\Omega\times (0,T],&\\
			\label{eq_mu_con}
			\mu+\lambda\Delta \phi-\lambda F'(\phi)=0,\quad \text{in} \quad\Omega\times (0,T],&\\
			\label{eq_ns_con}
			\frac{\partial \mathbf{u}}{\partial t}+(\mathbf{u}\cdot\nabla)\mathbf{u}-\nu\Delta\mathbf{u}+\nabla p-\mu\nabla\phi=0,\quad\text{in} \quad\Omega\times (0,T],&\\
			\label{eq_incompressi}
			\nabla\cdot\mathbf{u}=0,\quad\text{in} \quad\Omega\times (0,T],&
		\end{align}
	\end{subequations}
	with the following boundary and initial conditions of \eqref{eq_chns_equations}:
	\begin{subequations}
		\label{eq_boundary_initial_conditions_equations}
		\begin{align}
			\label{eq_boundary_conditions_equation}
			\frac{\partial \phi}{\partial \mathbf{n}}=\frac{\partial\mu}{\partial \mathbf{n}}=0,\quad\mathbf{u}=0,&\quad\text{on}\quad \partial\Omega\times (0,T],\\
			\label{eq_initial_conditions_equation}
			\phi(\mathbf{x},0)=\phi^0,\quad\mathbf{u}(\mathbf{x},0)=\mathbf{u}^0,&\quad\text{in}\quad \Omega.
		\end{align}
	\end{subequations}
	Here $\Omega\subset\mathbb{R}^d,~(d=2,3)$ is a bounded convex polygonal or polyhedral domain, and for $t\in (0,T]$,
	and $F(\phi)=\frac{1}{4\epsilon^2}\left(\phi^2-1\right)^2$.
	In the model, $\phi$ represents the phase field variable, and $\mu$ denotes the chemical potential, and the $\mathbf{u}$ and $p$ are the velocity and pressure of the fluid, respectively.
	$\epsilon$ is the interfacial width between the two phases field, and the parameters $M$, $\lambda$ and $\nu$ represent the mobility constant, 
	the mixing coefficient and the fluid viscosity, respectively. 
	It is well know that the energy of CHNS model is defined by 
	\begin{equation}
		E(\phi,\mathbf{u})=\int_{\Omega}\left(\frac{1}{2}|\mathbf{u}|^2+\frac{\lambda}{2}|\nabla\phi|^2+\lambda F(\phi)\right) dx.
	\end{equation}
	The system \eqref{eq_chns_equations}-\eqref{eq_boundary_initial_conditions_equations} satisfies the following energy dissipation law at $\forall t\in (0,T]$,
	\begin{equation}
		\frac{\partial E(\phi(t),\mathbf{u}(t))}{\partial t}=-M\|\nabla \mu\|^2-\nu\|\nabla\mathbf{u}\|^2.
	\end{equation}

	Over the past decade or so, one of the primary challenges for the CH  equations have been the efficient treatment of nonlinear terms to ensure that the resulting discretized system is solved efficiently while maintaining energy stability. Some of the techniques for dealing with the nonlinear terms in the numerical format of the CH equation  include the convex splitting \cite{1998_Eyre_David_J_Unconditionally_gradient_stable_time_marching_the_Cahn_Hilliard_equation}, stabilized semi-implicit methods \cite{2011_ShenJie_Energy_stable_schemes_for_Cahn_Hilliard_phase_field_model_of_two_phase_incompressible_flows}, invariant energy quadratization (IEQ) \cite{2017_YangXiaofeng_Numerical_approximations_for_the_molecular_beam_epitaxial_growth_model_based_on_the_invariant_energy_quadratization_method}, and the scalar auxiliary variable (SAV) approach \cite{2018_Shenjie_Xujie_SAV,2018_Shenjie_Xujie_CAEAFTSAVSYGF}. 
	On the other hand, the main challenges in solving the NS equations lie in the treatment of nonlinear terms and the coupling between velocity and pressure under incompressible conditions. For the coupling between velocity and pressure, a widely adopted strategy is the projection method  \cite{2006_Guermond_ShenJie_An_overview_of_projection_methods_for_incompressible_flows}, originally proposed by Chorin and Temam \cite{1968_Chorin_Alexandre_Numerical_solution_of_the_Navier_Stokes_equations}. The nonlinear terms in the NS equations can be handled either in an fully implicit, semi-implicit, or through SAV full explicatization. Wang et al. \cite{2017_Diegel_Convergence_analysis_and_error_estimates_for_a_second_order_accurate_finite_element_method_for_the_Cahn_Hilliard_Navier_Stokes_system} presented a novel second order in time mixed finite element scheme for the Cahn–Hilliard–Navier–Stokes equations with matched densities, and they \cite{2024_Wangcheng_Convergence_analysis_of_a_temporally_second_order_accurate_finite_element_scheme_for_the_Cahn_Hilliard_magnetohydrodynamics_system_of_equations} proposed and analyze a time second-order accurate numerical
	scheme for the Cahn–Hilliard-Magnetohydrodynamics equations.
	As a result, based on the several challenges mentioned earlier, a large number of scholars have worked on proposing energy-stable and efficient numerical schemes for CHNS
	 \cite{2015_HanDaozhi_A_second_order_in_time_uniquely_solvable_unconditionally_stable_numerical_scheme_for_Cahn_Hilliard_Navier_Stokes_equation,2015_ShenJie_Decoupled_energy_stable_schemes_for_phase_field_models_of_two_phase_incompressible_flows,2018_CaiYongyong_Error_estimates_for_a_fully_discretized_scheme_to_a_Cahn_Hilliard_phase_field_model_for_two_phase_incompressible_flows,2018_GaoYali_Decoupled_linear_and_energy_stable_finite_element_method_for_the_Cahn_Hilliard_Navier_Stokes_Darcy_phase_field_model,2019_LinLianlei_Numerical_approximation_of_incompressible_Navier_Stokes_equations_based_on_an_auxiliary_energy_variable,2020_JiaHongen_A_novel_linear_unconditional_energy_stable_scheme_for_the_incompressible_Cahn_Hilliard_Navier_Stokes_phase_field_model,2020_YangXiaofeng_Error_Analysis_of_a_Decoupled_Linear_Stabilization_Scheme_for_the_Cahn_Hilliard_Model_of_Two_Phase_Incompressible_Flows,2021_Yang_Xiaofeng_Decoupled_linear_and_unconditionally_energy_stable_fully_discrete_finite_element_numerical_scheme_for_a_two_phase_ferrohydrodynamics_model,2022JieShen_LiXiaoliMSAVCHNStwo_phase_incompressible_flows,2017_Diegel_Convergence_analysis_and_error_estimates_for_a_second_order_accurate_finite_element_method_for_the_Cahn_Hilliard_Navier_Stokes_system,2024_ChenWenbin_Convergence_analysis_of_a_second_order_numerical_scheme_for_the_Flory_Huggins_Cahn_Hilliard_Navier_Stokes_system}.
	 
	Recently, Xu et al. \cite{2020_YangXiaofeng_Error_Analysis_of_a_Decoupled_Linear_Stabilization_Scheme_for_the_Cahn_Hilliard_Model_of_Two_Phase_Incompressible_Flows} proposed a first-order in time, linear, fully decoupled, and energy stable scheme for the CHNS model. They introduced a velocity intermediate quantity in the discretization of the CH equation, thus explicitly dealing with the coupling term between the velocity and phase functions.
	The introduction of this intermediate variable makes error estimation involved, especially if one also wants to obtain optimal $L^2$ error estimates for the spatial variables.
	Then, Wen et al. \cite{2022_Wenjuan_Semi_implicit_unconditionally_energy_stable_stabilized_finite_element_method_based_on_multiscale_enrichment_for_the_Cahn_Hilliard_Navier_Stokes_phase_field_model} proposed a novel fully discrete stabilized finite element method using the lowest equal-order finite elements, and they analyzed the optimal $H^1$ error estimates for the phase field function, chemical potential and velocity.
	Cai et al.  \cite{2023_CaiWentao_Optimal_L2_error_estimates_of_unconditionally_stable_finite_element_schemes_for_the_Cahn_Hilliard_Navier_Stokes_system}
	first presented optimal $L^2$ error estimates of  the convex splitting finite element method for the CHNS system. However, these schemes are a coupled system, whose numerical results in relatively long computation time.
	Soon after, Yang et al. \cite{2024_YiNianyu_Convergence_analysis_of_a_decoupled_pressure_correction_SAV_FEM_for_the_Cahn_Hilliard_Navier_Stokes_model} proposed a step-by-step decoupling method that utilizes the SAV approach to split the CHNS model into CH and NS components, enabling the solution of these two equations separately, and also analyzed the optimal $L^2$ error estimates for the numerical solution, but their scheme did not achieve original energy stabilization.
	
	To the best of our knowledge, there is no literature on the optimal $L^2$ error estimates for the fully decoupled, original energy stable numerical scheme of CHNS model.  To achieve this purpose, we firstly decouple the CH and NS equations by handling the convective term implicitly/explicitly in the CH equation, thus resulting in a fully decoupled and original energy stable semi-discrete scheme. Secondly, we discretize the spatial variables using the mixed finite element method to obtain a fully decoupled, fully discretized scheme with original energy stable, which is analyzed to obtain the optimal $L^2$ error estimate. We notice that Kay et al.  \cite{2008_Kay_David_and_Welford_Richard_Finite_element_approximation_of_a_Cahn_Hilliard_Navier_Stokes_system} also constructed a fully decoupled numerical scheme with original energy stabilization, they, however, did not provide the optimal $L^2$ error estimates. Additionally, Cai et al. \cite{2023_CaiWentao_Optimal_L2_error_estimates_of_unconditionally_stable_finite_element_schemes_for_the_Cahn_Hilliard_Navier_Stokes_system} analytically derived the optimal $L^2$ error estimates for the scheme with original energy stabilization, but the coupled nature of their scheme made computation expensive. We emphasize that despite a mild time-step restriction is imposed in our energy stability, our numerical experiment demonstrates that our fully decoupled scheme is unconditionally energy stable.
	
	This manuscript is organized as follows: The Section \ref{section_preliminaries} introduces fundamental notations and lemmas. In Section \ref{section_the_numerical_schemes}, we propose a fully decoupled, energy-stable scheme utilizing fully discrete mixed finite element methods based on pressure-correction and convex splitting techniques for the CHNS model; we then prove the energy stability of our scheme. In Section \ref{section_error_analysis}, we provide the corresponding error estimates and establish optimal $L^2$ error estimates for the fully discrete scheme. Numerical experiments are conducted in Section \ref{section_numerical_experiments} to validate the theoretical results of our scheme.
	
	\section{The mathematical prelimilaries}\label{section_preliminaries}
		In the section, we introduce some mathematical notations. 
	For any integer $k\geq0$ and $1\leq p\leq \infty$, let $W^{k,p}(\Omega)$ 
	be the Sobolev space,
	we denote $H^k(\Omega)=W^{k,2}(\Omega)$ and $L^p(\Omega)=W^{0,p}(\Omega)$.
	The norm of $L^2$ denote by $\|\cdot\|$ and the closure of $C_0^{\infty}(\Omega)$
	in $W^{k,p}$ space denotes by $W_0^{k,p}$ and $H_0^k(\Omega)=W_0^{k,p}$.
	Let $\mathbf{L}^p(\Omega)=[L^p(\Omega)]^d$, $\mathbf{H}_0^1(\Omega)=[H_0^1(\Omega)]^d$,  $\mathbf{W}^{k,p}(\Omega)=[W^{k,p}(\Omega)]^d$ the corresponding vector valued Sobolev spaces
	with the norms $\|\cdot\|_{H^k}$ and $\|\cdot\|_{L^p}$. 
	For $\boldsymbol{v}\in \mathbf{H}^1(\Omega)$ and $\phi, ~w \in H^1(\Omega)$, we define 
	the trilinear form $b(\cdot,\cdot,\cdot)$ by
	\begin{equation}
		\label{eq_trilinear_forms}
		b(\phi,\boldsymbol{v},w)=\left(\nabla\phi \cdot\boldsymbol{v},w\right),
	\end{equation}
	and for $\boldsymbol{u},\boldsymbol{v},\boldsymbol{w}\in \mathbf{H}^1(\Omega)$, we define the trilinear form
 	$\boldsymbol{B}(\cdot,\cdot,\cdot)$ by
 	\begin{equation}
 		\label{eq_trilinear_form_b_puls}
 		\boldsymbol{B}(\boldsymbol{u},\boldsymbol{v},\boldsymbol{w})
 		=\frac{1}{2}\left(\left(\boldsymbol{u}\cdot\nabla\right)\boldsymbol{v},\boldsymbol{w}\right)-
 		\frac{1}{2}\left((\boldsymbol{u}\cdot\nabla)\boldsymbol{w},\boldsymbol{v}\right).
 	\end{equation}
	It is well-know result that 
	\begin{equation}
		\label{eq_trilinear_forms_bound}
		\bigg|b(\phi,\boldsymbol{v},w)\bigg|=\bigg|\int_{\Omega}w\nabla\phi\cdot\boldsymbol{v}dx\bigg|
		\leq \|\phi\boldsymbol{v}\|\|\nabla w\|
		\leq \|\phi\|_{L^4}\|\boldsymbol{v}\|_{L^4}\|\nabla w\|
		\leq \|\phi\|_{H^1}\|\boldsymbol{v}\|_{H^1}\|\nabla w\|.
	\end{equation}
	We next introduce the Ritz and Stokes projections. 
	Let $\mathcal{T}_h$ denotes a quasi-uniform partition of $\Omega$ into triangles $\mathcal{K}_j,~j=1,\cdots,M$, in $\mathbb{R}^2$ with mesh size $h=\max_{1\leq j\leq M}\{\dim\mathcal{K}_j\}$. For a given positive integer $r \geq 2$, we set the phase field-velocity-pressure finite element spaces 
	\begin{equation}
		\begin{aligned}
			& S_h^r=\left\{v_h \in C(\Omega):\left.v_h\right|_{\mathcal{K}_j} \in P_r\left(\mathcal{K}_j\right), \forall \mathcal{K}_j \in \mathcal{T}_h\right\}, \\
			& \mathring{S}_h^{r-1}=S_h^{r-1} \cap L_0^2(\Omega), \\
			& \mathbf{X}_h^{r}=\left\{\mathbf{v}_h \in \mathbf{H}_0^1(\Omega)^d:\left.\mathbf{v}_h\right|_{\mathcal{K}_j} \in \mathbf{P}_{r}\left(\mathcal{K}_j\right)^d, \forall \mathcal{K}_j \in \mathcal{T}_h\right\},
		\end{aligned}
	\end{equation}
	where $P_r\left(\mathcal{K}_j\right)$ is the space of polynomials of degree $r$ on $\mathcal{K}_j$. 
	For some constant $C>0$,
	it is well-known that both the Taylor-Hood elements satisfy the discrete inf-sup condition \cite{1984_Brezzi_FortinA_stable_finite_element_for_the_Stokes_equations}
	\begin{equation}\label{eq_inf_sup_condition}
		\left\|q_h\right\|_{L^2} \leq C \sup _{\mathbf{0} \neq \mathbf{v}_h \in \mathbf{X}_h^{r} } 
		\frac{\left(q_h, \nabla \cdot \mathbf{v}_h\right)}{\left\|\nabla\mathbf{v}_h\right\|}, \quad \forall q_h \in \mathring{S}_h^{r-1}.
	\end{equation}
	For the simplicity of notations and $r\geq 2$, we define
	\begin{equation}
		\mathbf{\mathcal{X}}_h^r= S_h^r \times S_h^r \times \mathbf{X}_h^{r}  \times \mathring{S}_h^{r-1},
	\end{equation}
	and denote $\boldsymbol{g}^{n+1}=\boldsymbol{g}\left(x, t^{n+1}\right)$, 
	and
	\begin{equation}\label{eq_d_t}
		\delta_\tau\boldsymbol{g}^{n+1}=\frac{\boldsymbol{g}^{n+1}-\boldsymbol{g}^n}{\tau}.
	\end{equation}
	\begin{Remark}
		In an abuse of notation, we use  $\boldsymbol{g}^{n+1}$  hereafter to denote the value of the exact solution $\boldsymbol{g}$ at $t^{n+1}$. 
	\end{Remark}
	Next, 
	we denote $R_h:H^1(\Omega)\rightarrow S_h^{r}$ 
	as the classic Ritz projection \cite{1973_Wheeler_Mary_Fanett_A_priori_L2_error_estimates_for_Galerkin_approximations_to_parabolic_partial_differential_equations},
	\begin{equation}\label{eq_Ritz_projection}
		\left(\nabla\left(\psi- R_h\psi\right),\nabla\varphi_h\right)=0,\quad\forall~\psi\in H^1(\Omega),~\varphi_h\in S_h^{r}.
	\end{equation}
	Now, for $s\in[2,\infty]$ and $n=1,2,\cdots,N$,  from the finite element approximation theory \cite{2008_Brenner_Susanne_C_The_mathematical_theory_of_finite_element_methods}, 
	it holds that
	\begin{flalign}
		\label{eq_psi_Rhpsi_Ls_norm_inequation}
		&\|\psi-R_h\psi\|_{L^s}+h\|\psi-R_h\psi\|_{W^{1,s}}\leq Ch^{r+1}\|\psi\|_{W^{r+1,s}},\\
		\label{eq_psi_Rhpsi_Hneg1_norm_inequation}
		&\|\psi-R_h\psi\|_{H^{-1}}\leq Ch^{r+2}\|\psi\|_{H^{r+1}},\\
		\label{eq_Dtau_psi_Rhpsi_L2_norm_inequation}
		&\|\delta_\tau\left(\psi^n-R_h\psi^n\right)\|+h\|\delta_\tau\left(\psi^n-R_h\psi^n\right)\|_{H^1}\leq Ch^{r+1}\|\delta_\tau\psi^{n}\|_{H^{r+1}},\\
		\label{eq_Dtau_psi_Rhpsi_Hneg1_norm_inequation}
		&\|\delta_\tau\left(\psi^n-R_h\psi^n\right)\|_{H^{-1}}\leq Ch^{r+2}\|\delta_\tau\psi^n\|_{H^{r+1}},
	\end{flalign}
	where $\delta_\tau$ is defined in \eqref{eq_d_t}.
	Let $I_h:L^2(\Omega)\rightarrow S_h^r $  and $\boldsymbol{I}_h:\mathbf{L}^2(\Omega)\rightarrow\mathbf{X}_h^{r}$
	are defined the $L^2$ projection operators \cite{2008_Hou_An_efficient_semi_implicit_immersed_boundary_method_for_the_Navier_Stokes_equations,2015_HeYinnian_Unconditional_convergence_of_the_Euler_semi_implicit_scheme_for_the_three_dimensional_incompressible_MHD_equations} by
	\begin{flalign}
		\label{eq_L2_projection_operators}
		&\left(v-I_hv,w_h\right)=0,~\forall w_h\in S_h^r,\\
		&\left(\mathbf{v}-\boldsymbol{I}_h\mathbf{v},\mathbf{w}_h\right)=0,~\forall \mathbf{w}_h\in \mathbf{X}_h^{r}.
	\end{flalign}
	It is known to all that the $L^2$ projection 
	satisfies the following estimates,
	\begin{flalign}
		& \left\|v-I_h v\right\|+h\left\|\nabla\left(v-I_h v\right)\right\| \leq C h^{r+1}\|v\|_{H^{r+1}}, \\
		& \left\|\mathbf{v}-\boldsymbol{I}_h \mathbf{v}\right\|+h\left\|\nabla\left(\mathbf{v}-\boldsymbol{I}_h \mathbf{v}\right)\right\|\leq C h^{r+1}\|\mathbf{v}\|_{H^{r+1}}.
	\end{flalign}
	For $\left(\mathbf{v}_h,q_h\right)\in\mathbf{X}_h^{r}\times\mathring{S}_h^{r-1}$, the Stokes projection \cite{1986_Girault_Vivette_Finite_element_methods_for_Navier_Stokes_equations} $P_h$ 
	or $\boldsymbol{P}_h:\mathbf{H}_0^1(\Omega)\times L_0^2(\Omega)\rightarrow \mathbf{X}_h^{r}\times\mathring{S}_h^{r-1}$ 
	are defined by
	\begin{flalign}
		\label{eq_stokes_quasi_proojection_equation0001}
		&\left(\nabla\left(\mathbf{u}-\boldsymbol{P}_h(\mathbf{u},p)\right),\nabla \mathbf{v}_h\right)-\left(p-P_h(\mathbf{u},p),\nabla\cdot\mathbf{v}_h\right)=0,\\
		\label{eq_stokes_quasi_proojection_equation0002}
		&\left(\nabla\cdot\left(\mathbf{u}-\boldsymbol{P}_h(\mathbf{u},p)\right),q_h\right)=0.
	\end{flalign}
	To be the simplicity of notations, we denote $P_hp=P_h(\mathbf{u},p)$ and $\boldsymbol{P}_h\mathbf{u}=\boldsymbol{P}_h(\mathbf{u},p)$.
	\begin{Lemma}[\cite{2006_Thomee_Galerkin_finite_element_methods_for_parabolic_problems,2023_CaiWentao_Optimal_L2_error_estimates_of_unconditionally_stable_finite_element_schemes_for_the_Cahn_Hilliard_Navier_Stokes_system}]
		\label{lemma_stokes_projection001}
		Under the assumption of regularity \eqref{eq_varibles_satisfied_regularities}, it holds that
		\begin{equation}
			\|\mathbf{u}-\boldsymbol{P}_h\mathbf{u}\|+h\left(\|\nabla\left(\mathbf{u}-\boldsymbol{P}_h\mathbf{u}\right)\|+\|p-P_hp\|\right)
			\leq Ch^{r+1}\left(\|\mathbf{u}\|_{H^{r+1}}+\|p\|_{H^r}\right),
		\end{equation}
		and
		\begin{equation}
			\begin{aligned}
				\|\delta_{\tau}\left(\mathbf{u}^n-\boldsymbol{P}_h\mathbf{u}^n\right)\|\leq&~ Ch^{r+1}\left(\|\delta_{\tau}\mathbf{u}^n\|_{H^{r+1}}+\|\delta_{\tau}p^n\|_{H^r}\right).
			\end{aligned}
		\end{equation}
		where $C$ is the constant the only depends on $\Omega$.
	\end{Lemma}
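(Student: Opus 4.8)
The plan is to treat this as the standard finite element error analysis for a generalized Stokes (saddle-point) problem: first establish the energy-norm estimate for the velocity together with the $L^2$ estimate for the pressure by combining the discrete inf-sup condition \eqref{eq_inf_sup_condition} with a best-approximation (C\'ea-type) argument, then upgrade to the optimal $L^2$ velocity estimate by an Aubin--Nitsche duality argument, and finally deduce the time-difference bound from linearity of the projection. Throughout I would set $\mathbf{e}_{\mathbf{u}}=\mathbf{u}-\boldsymbol{P}_h\mathbf{u}$ and $e_p=p-P_hp$.

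For the first step, I would subtract an arbitrary finite element pair $(\mathbf{v}_h,q_h)\in\mathbf{X}_h^r\times\mathring{S}_h^{r-1}$ from the defining relations \eqref{eq_stokes_quasi_proojection_equation0001}--\eqref{eq_stokes_quasi_proojection_equation0002}, producing a Galerkin-orthogonality identity for $(\mathbf{e}_{\mathbf{u}},e_p)$. Testing the momentum relation with a suitable difference of interpolants and using the divergence relation \eqref{eq_stokes_quasi_proojection_equation0002}, one controls $\|\nabla\mathbf{e}_{\mathbf{u}}\|$ by the best-approximation errors, while $\|e_p\|$ is bounded through the discrete inf-sup condition \eqref{eq_inf_sup_condition} applied to the momentum residual. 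By the Brezzi saddle-point theory this yields the quasi-optimality
\[
\|\nabla\mathbf{e}_{\mathbf{u}}\|+\|e_p\|\le C\Big(\inf_{\mathbf{v}_h}\|\nabla(\mathbf{u}-\mathbf{v}_h)\|+\inf_{q_h}\|p-q_h\|\Big),
\]
and inserting the interpolation bounds (the $W^{1,s}$ estimate analogous to \eqref{eq_psi_Rhpsi_Ls_norm_inequation} for the velocity space and the $L^2$ approximation for pressure) gives the $O(h^r)$ right-hand side, which after multiplication by $h$ produces the $h$-weighted terms of the claim.

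For the optimal $L^2$ velocity bound I would run a duality argument: introduce the dual Stokes problem $-\Delta\mathbf{w}+\nabla\xi=\mathbf{e}_{\mathbf{u}}$, $\nabla\cdot\mathbf{w}=0$ in $\Omega$ with $\mathbf{w}=0$ on $\partial\Omega$. Because $\Omega$ is convex, elliptic regularity gives $\|\mathbf{w}\|_{H^2}+\|\xi\|_{H^1}\le C\|\mathbf{e}_{\mathbf{u}}\|$. Pairing the dual equations with $\mathbf{e}_{\mathbf{u}}$, integrating by parts, and subtracting the finite element approximations of $(\mathbf{w},\xi)$ so as to invoke the Galerkin orthogonality of the first step, the right-hand side collapses to a product of the already-controlled energy errors and the $O(h)$ approximation errors of $(\mathbf{w},\xi)$; hence $\|\mathbf{e}_{\mathbf{u}}\|^2\le Ch\big(\|\nabla\mathbf{e}_{\mathbf{u}}\|+\|e_p\|\big)\|\mathbf{e}_{\mathbf{u}}\|$, and dividing through yields $\|\mathbf{e}_{\mathbf{u}}\|\le Ch^{r+1}(\|\mathbf{u}\|_{H^{r+1}}+\|p\|_{H^r})$.

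The time-difference estimate requires no new work: since the map $(\mathbf{u},p)\mapsto(\boldsymbol{P}_h\mathbf{u},P_hp)$ is linear, $\delta_\tau(\mathbf{u}^n-\boldsymbol{P}_h\mathbf{u}^n)$ is exactly the velocity projection error associated with the data $(\delta_\tau\mathbf{u}^n,\delta_\tau p^n)$, so applying the $L^2$ estimate just proved to this data gives the stated bound. I expect the main obstacle to be the duality step: one must choose the test functions so that the non-orthogonal contributions vanish against the Stokes-projection relations, and the sharpness of the result hinges on the full $H^2\times H^1$ regularity of the dual problem, which is precisely why the convexity of $\Omega$ is assumed.
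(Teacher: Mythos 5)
Your proposal is correct: the quasi-optimal energy/pressure estimate from Brezzi saddle-point theory with the inf-sup condition \eqref{eq_inf_sup_condition}, the Aubin--Nitsche duality argument (using convexity of $\Omega$ for $H^2\times H^1$ dual regularity) to reach the optimal $\mathcal{O}(h^{r+1})$ velocity bound, and the linearity-of-the-projection observation for the $\delta_\tau$ estimate are all sound. The paper itself omits the proof and simply defers to the cited references (Thom\'ee; Cai et al., Lemma 3.2), whose argument is precisely this standard one, so your write-up supplies the same approach rather than a genuinely different route.
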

	The proof of this lemma is similar to Lemma 3.2 of \cite{2023_CaiWentao_Optimal_L2_error_estimates_of_unconditionally_stable_finite_element_schemes_for_the_Cahn_Hilliard_Navier_Stokes_system}. Here we omit the detailed procedure and refer the interested reader to \cite{2023_CaiWentao_Optimal_L2_error_estimates_of_unconditionally_stable_finite_element_schemes_for_the_Cahn_Hilliard_Navier_Stokes_system}.
	\begin{Lemma}[\cite{2019_YangXiaofeng_Convergence_analysis_of_an_unconditionally_energy_stable_projection_scheme_for_magneto_hydrodynamic_equations}]
		\label{lemma_stokes_projection002}
		The Stokes projection $\boldsymbol{P}_h(\mathbf{u},p)$ is $\mathbf{H}^1$ stable in the case such that 
		\begin{equation}
			\|\boldsymbol{P}_h(\mathbf{u},p)\|_{H^1}\leq C\left(\|\mathbf{u}\|_{H^2}+\|p\|_{H^1}\right).
		\end{equation}
	\end{Lemma}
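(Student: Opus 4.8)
The plan is to recognize the pair $(\boldsymbol{P}_h\mathbf{u},P_hp)\in\mathbf{X}_h^{r}\times\mathring{S}_h^{r-1}$ defined by \eqref{eq_stokes_quasi_proojection_equation0001}--\eqref{eq_stokes_quasi_proojection_equation0002} as the unique solution of a discrete generalized Stokes saddle-point problem, and then to invoke the Babu\v{s}ka--Brezzi stability estimate. Rewriting the two defining identities, for every $(\mathbf{v}_h,q_h)\in\mathbf{X}_h^{r}\times\mathring{S}_h^{r-1}$ one has
\begin{equation}
(\nabla\boldsymbol{P}_h\mathbf{u},\nabla\mathbf{v}_h)-(P_hp,\nabla\cdot\mathbf{v}_h)=\mathcal{F}(\mathbf{v}_h),\qquad (\nabla\cdot\boldsymbol{P}_h\mathbf{u},q_h)=\mathcal{G}(q_h),
\end{equation}
where the data functionals are $\mathcal{F}(\mathbf{v}_h)=(\nabla\mathbf{u},\nabla\mathbf{v}_h)-(p,\nabla\cdot\mathbf{v}_h)$ and $\mathcal{G}(q_h)=(\nabla\cdot\mathbf{u},q_h)$.

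Second, I would bound these functionals in the dual norms dictated by the abstract theory. Since $\mathbf{u}\in\mathbf{H}^2(\Omega)$, $p\in H^1(\Omega)$ and $\mathbf{v}_h\in\mathbf{H}_0^1(\Omega)$, integrating by parts moves every derivative off the test function,
\[
\mathcal{F}(\mathbf{v}_h)=-(\Delta\mathbf{u},\mathbf{v}_h)+(\nabla p,\mathbf{v}_h),
\]
the boundary contributions vanishing because $\mathbf{v}_h|_{\partial\Omega}=0$. Cauchy--Schwarz together with the Poincar\'e inequality $\|\mathbf{v}_h\|\le C\|\nabla\mathbf{v}_h\|$ then yields $|\mathcal{F}(\mathbf{v}_h)|\le C(\|\mathbf{u}\|_{H^2}+\|p\|_{H^1})\|\nabla\mathbf{v}_h\|$, while trivially $|\mathcal{G}(q_h)|\le\|\nabla\cdot\mathbf{u}\|\,\|q_h\|\le C\|\mathbf{u}\|_{H^1}\|q_h\|$.

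Third, the Dirichlet form $(\nabla\mathbf{w}_h,\nabla\mathbf{v}_h)$ is continuous and, by Poincar\'e, coercive on $\mathbf{X}_h^{r}$, while the divergence coupling $(q_h,\nabla\cdot\mathbf{v}_h)$ satisfies the discrete inf-sup condition \eqref{eq_inf_sup_condition}. The classical saddle-point stability estimate therefore gives
\[
\|\nabla\boldsymbol{P}_h\mathbf{u}\|+\|P_hp\|\le C\bigl(\|\mathcal{F}\|_{(\mathbf{X}_h^{r})'}+\|\mathcal{G}\|_{(\mathring{S}_h^{r-1})'}\bigr)\le C(\|\mathbf{u}\|_{H^2}+\|p\|_{H^1}),
\]
and a final application of Poincar\'e to pass from $\|\nabla\boldsymbol{P}_h\mathbf{u}\|$ to $\|\boldsymbol{P}_h\mathbf{u}\|_{H^1}$ closes the argument.

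I expect the only delicate point to be the clean invocation of the Babu\v{s}ka--Brezzi estimate: one must control the data $\mathcal{F},\mathcal{G}$ in precisely the dual norms the abstract theory requires, which is exactly what the integration by parts in the second step achieves, since it trades the $H^{-1}$-type pairing against $\nabla\mathbf{v}_h$ for an $L^2$ pairing against $\mathbf{v}_h$ that Poincar\'e can absorb. In the present CHNS setting the exact velocity is divergence free, so $\mathcal{G}\equiv0$ and $\boldsymbol{P}_h\mathbf{u}$ lies in the discretely solenoidal subspace; in that case one may bypass the inf-sup machinery altogether by testing the first identity with $\mathbf{v}_h=\boldsymbol{P}_h\mathbf{u}$, using $(\nabla\cdot\boldsymbol{P}_h\mathbf{u},P_hp)=0$, and estimating the residual pressure term through the $L^2$-projection of $p$ onto $\mathring{S}_h^{r-1}$, which already produces the stated bound.
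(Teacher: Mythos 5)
Your proposal is correct, but note that the paper itself offers no proof of this lemma at all: it is quoted verbatim from the cited reference on MHD projection schemes (the same way Lemma \ref{lemma_stokes_projection001} is deferred to Cai et al.), so what you have written is a self-contained argument where the paper has none. Your route is the standard one behind such citations: view $(\boldsymbol{P}_h\mathbf{u},P_hp)$ as the solution of a discrete Stokes saddle-point problem with data $\mathcal{F},\mathcal{G}$, integrate by parts against the vanishing trace of $\mathbf{v}_h$ to trade $(\nabla\mathbf{u},\nabla\mathbf{v}_h)-(p,\nabla\cdot\mathbf{v}_h)$ for $L^2$ pairings controlled by $\|\mathbf{u}\|_{H^2}+\|p\|_{H^1}$, and close with the Babu\v{s}ka--Brezzi stability estimate, whose hypotheses (coercivity of the Dirichlet form via Poincar\'e, the $h$-uniform inf-sup condition \eqref{eq_inf_sup_condition}) you verify correctly. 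Two remarks on what your argument buys. First, it requires only the regularity $\mathbf{u}\in\mathbf{H}^2$, $p\in H^1$ appearing on the right-hand side of the stated bound, whereas the lazy alternative of combining the triangle inequality with Lemma \ref{lemma_stokes_projection001} would need $\mathbf{u}\in\mathbf{H}^{r+1}$, $p\in H^r$; your proof is therefore sharper and matches the lemma as stated. Second, your closing shortcut for the divergence-free case is also sound: since $\nabla\cdot\mathbf{u}=0$ forces $(\nabla\cdot\boldsymbol{P}_h\mathbf{u},q_h)=0$ for all $q_h\in\mathring{S}_h^{r-1}$ and $P_hp$ lies in that space, the pressure coupling drops out when testing with $\mathbf{v}_h=\boldsymbol{P}_h\mathbf{u}$, and the residual term $(p,\nabla\cdot\boldsymbol{P}_h\mathbf{u})$ is handled either by subtracting the $L^2$-projection of $p$ as you suggest or, even more directly, by integrating it by parts to $-(\nabla p,\boldsymbol{P}_h\mathbf{u})$; both yield the claimed bound on the velocity component, which is all the lemma asserts (the full saddle-point machinery is needed only if one also wants the accompanying bound on $\|P_hp\|$).
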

	Next we introduce the discrete Laplacian operator
	\cite{2020_Chen_Hongtao_Optimal_error_estimates_for_the_scalar_auxiliary_variable_finite_element_schemes_for_gradient_flows} $\Delta_h:\mathring{S}_h^{r-1}\rightarrow \mathring{S}_h^{r-1}$ such that
	\begin{flalign}
		\label{eq_discrete_Laplacian_operator}
		&\left(-\Delta_hv_h,\xi_h \right)=\left(\nabla v_h,\nabla\xi_h\right),\\
		\label{eq_discrete_Laplacian_operator11}
		&\left(\nabla(-\Delta_h^{-1})v_h,\nabla\xi_h\right)=\left(v_h,\xi_h\right).
	\end{flalign}
	By defining the discrete Stokes operator $A_h=-\boldsymbol{I}_h\Delta_h$ \cite{2024_YiNianyu_Convergence_analysis_of_a_decoupled_pressure_correction_SAV_FEM_for_the_Cahn_Hilliard_Navier_Stokes_model}, it holds
	\begin{flalign}
		\label{eq_discrete_stokes_operators001}
		\|v_h\|_{H^2}=\|A_hv_h\|,~\|v_h\|_{H^{-1}}=\|A_h^{-1/2}v_h\|,~v_h\in \mathbf{X}_h^{r},\\
		\label{eq_discrete_stokes_operators002}
		\|A_h^{1/2}v_h\|=\|\nabla v_h\|,~\|\nabla A_h^{-1/2}v_h\|=\|v_h\|,~v_h\in\mathbf{X}_h^{r}.
	\end{flalign}
	If $v_h$ is a constant, 
	we denote $(-\Delta_h)^{\frac{1}{2}}v_h=0$ and $(-\Delta_h)^{-\frac{1}{2}}v_h=0$.
	\begin{Lemma}[\cite{2023_CaiWentao_Optimal_L2_error_estimates_of_unconditionally_stable_finite_element_schemes_for_the_Cahn_Hilliard_Navier_Stokes_system}]\label{Lemma_operators_H10203}
		For $v_h\in\mathring{S}_h^{r-1}$, and the operators $\left(-\Delta_h\right)^{1/2}:\mathring{S}_h^{r-1}\rightarrow\mathring{S}_h^{r-1}$ and
		$\left(-\Delta_h\right)^{-1/2}:\mathring{S}_h^{r-1}\rightarrow\mathring{S}_h^{r-1}$, we have the estimates
		\begin{flalign}
			\label{eq_operators_estimates_positive_one}
			&\|v_h\|_{H^{1}}\leq C\|\left(-\Delta_h\right)^{1/2}v_h\|\leq c\|v_h\|_{H^1},\\
			\label{eq_operators_estimates_neg_one}
			&\|v_h\|_{H^{-1}}\leq C\|\left(-\Delta_h\right)^{-1/2}v_h\|\leq c\|v_h\|_{H^{-1}},
		\end{flalign}
		and
		\begin{equation}\label{eq_operators_estimates_neg_3}
			\|v_h\|^2\leq\varepsilon\|\nabla v_h\|^2+C\varepsilon^{-1}\|\left(-\Delta_h\right)^{-1/2}v_h\|^2,
		\end{equation}
		where $\varepsilon$ is an arbitrary positive constant independent of $h$.
	\end{Lemma}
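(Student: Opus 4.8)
The plan is to reduce every assertion to a single exact identity for the discrete Laplacian and then invoke the Poincar\'e inequality, the stability of an $L^2$ projection, and Young's inequality. Since the bilinear form $(\nabla v_h,\nabla\xi_h)$ defining $-\Delta_h$ in \eqref{eq_discrete_Laplacian_operator} is symmetric and positive definite on $\mathring{S}_h^{r-1}$ (its only degenerate direction, the constants, has been removed by the zero-mean constraint in $L_0^2(\Omega)$), the operator $-\Delta_h$ is self-adjoint with a positive spectrum, so its fractional powers $(-\Delta_h)^{\pm1/2}$ are well defined and self-adjoint. The cornerstone I would record first is the identity
\[
\|(-\Delta_h)^{1/2}v_h\|^2=\left(-\Delta_h v_h,v_h\right)=\left(\nabla v_h,\nabla v_h\right)=\|\nabla v_h\|^2,
\]
obtained by choosing $\xi_h=v_h$ in \eqref{eq_discrete_Laplacian_operator}; hence $\|(-\Delta_h)^{1/2}v_h\|=\|\nabla v_h\|$ exactly. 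With this, \eqref{eq_operators_estimates_positive_one} is immediate: the right inequality is just $\|\nabla v_h\|\le\|v_h\|_{H^1}$ with $c=1$, while the left inequality follows from the Poincar\'e--Wirtinger inequality $\|v_h\|\le C_P\|\nabla v_h\|$, valid because every $v_h\in\mathring{S}_h^{r-1}$ has zero mean on the connected domain $\Omega$, giving $\|v_h\|_{H^1}^2=\|v_h\|^2+\|\nabla v_h\|^2\le(1+C_P^2)\|\nabla v_h\|^2$.

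For \eqref{eq_operators_estimates_neg_one} I would introduce $w_h=(-\Delta_h)^{-1}v_h\in\mathring{S}_h^{r-1}$, so that $(\nabla w_h,\nabla\xi_h)=(v_h,\xi_h)$ for all $\xi_h\in\mathring{S}_h^{r-1}$, and testing with $\xi_h=w_h$ yields $\|(-\Delta_h)^{-1/2}v_h\|^2=(v_h,w_h)=\|\nabla w_h\|^2$. Since $w_h\in H^1(\Omega)\cap L_0^2(\Omega)$, using $w_h$ as a competitor in the duality characterization of $\|\cdot\|_{H^{-1}}$ (the dual norm taken over zero-mean $H^1$ test functions with the $\|\nabla\cdot\|$ seminorm) gives at once $\|(-\Delta_h)^{-1/2}v_h\|\le\|v_h\|_{H^{-1}}$, the right inequality with $c=1$. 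The reverse inequality is the delicate point: for an arbitrary $\phi\in H^1(\Omega)\cap L_0^2(\Omega)$ I would let $\phi_h$ be the $L^2$-orthogonal projection of $\phi$ onto $S_h^{r-1}$, which preserves the mean and therefore lies in $\mathring{S}_h^{r-1}$; because $v_h\in S_h^{r-1}$, orthogonality gives $(v_h,\phi)=(v_h,\phi_h)=(\nabla w_h,\nabla\phi_h)\le\|\nabla w_h\|\,\|\nabla\phi_h\|$. The crux of the whole lemma is the bound $\|\nabla\phi_h\|\le C\|\nabla\phi\|$, which is exactly the $H^1$-stability of the $L^2$ projection on the quasi-uniform partition $\mathcal{T}_h$; it converts the estimate into $(v_h,\phi)\le C\|(-\Delta_h)^{-1/2}v_h\|\,\|\nabla\phi\|$, and taking the supremum over $\phi$ produces $\|v_h\|_{H^{-1}}\le C\|(-\Delta_h)^{-1/2}v_h\|$.

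Finally, \eqref{eq_operators_estimates_neg_3} is a one-line consequence. By self-adjointness of the fractional powers I would write
\[
\|v_h\|^2=\left((-\Delta_h)^{1/2}v_h,(-\Delta_h)^{-1/2}v_h\right)\le\|(-\Delta_h)^{1/2}v_h\|\,\|(-\Delta_h)^{-1/2}v_h\|=\|\nabla v_h\|\,\|(-\Delta_h)^{-1/2}v_h\|,
\]
using the identity of the first paragraph together with the Cauchy--Schwarz inequality, and then Young's inequality $ab\le\varepsilon a^2+(4\varepsilon)^{-1}b^2$ with $a=\|\nabla v_h\|$ and $b=\|(-\Delta_h)^{-1/2}v_h\|$ delivers \eqref{eq_operators_estimates_neg_3} with $C=1/4$. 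I expect the only genuinely technical ingredient in the argument to be the $H^1$-stability of the $L^2$ projection invoked for the lower bound in \eqref{eq_operators_estimates_neg_one}; the exact spectral identity $\|(-\Delta_h)^{1/2}v_h\|=\|\nabla v_h\|$ and the Poincar\'e and Young inequalities dispatch everything else.
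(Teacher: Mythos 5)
Your proof is correct. Note that the paper itself offers no proof of this lemma: it is quoted verbatim from the cited reference \cite{2023_CaiWentao_Optimal_L2_error_estimates_of_unconditionally_stable_finite_element_schemes_for_the_Cahn_Hilliard_Navier_Stokes_system}, so there is no in-paper argument to compare against. Your route --- the exact spectral identity $\|(-\Delta_h)^{1/2}v_h\|=\|\nabla v_h\|$, the Poincar\'e inequality on the zero-mean space for \eqref{eq_operators_estimates_positive_one}, the duality argument via $w_h=(-\Delta_h)^{-1}v_h$ for \eqref{eq_operators_estimates_neg_one}, and Cauchy--Schwarz plus Young for \eqref{eq_operators_estimates_neg_3} --- is the standard argument used in that literature, and every step checks out. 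The two hinges you correctly identify are legitimate in this setting: the $L^2$ projection preserves the zero-mean constraint because constants belong to $S_h^{r-1}$, and its $H^1$-stability holds because $\mathcal{T}_h$ is assumed quasi-uniform; without the latter the lower bound in \eqref{eq_operators_estimates_neg_one} would indeed be the one genuinely nontrivial point.
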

	Finally, we note the well-known inverse inequalities \cite{2023_CaiWentao_Optimal_L2_error_estimates_of_unconditionally_stable_finite_element_schemes_for_the_Cahn_Hilliard_Navier_Stokes_system}
	\begin{flalign}
		\label{eq_inverse_inequalities001}
		&\|v_h\|_{L^4}\leq Ch^{-1/2}\|v_h\|, \quad \forall v_h\in S_h^r,\\
		\label{eq_inverse_inequalities002}
		&\|v_h\|_{L^\infty}\leq C\sqrt{\ln (1/h)}\|v_h\|_{H^1}, \quad \forall v_h\in S_h^r.
	\end{flalign}
	
	\begin{Lemma}[\cite{2006_FengXiaobing_FullydiscretefiniteelementapproximationsoftheNavierStokesCahnHilliarddiffuseinterfacemodelfortwophasefluidflows,2015_Diegel_Analysis_of_a_mixed_finite_element_method_for_a_Cahn_Hilliard_Darcy_Stokes_system}]
		\label{Lemma_0402}
		Suppose $g\in H^1(\Omega)$, and $v\in \mathring{S}_h^{r-1}$,
		\begin{equation}
			|(g,v)|\leq C\|\nabla g\|\|v\|_{H^{-1}},
		\end{equation}
		and for all $g\in L_0^2(\Omega)$, we have 
		\begin{equation}
			\|g\|_{H^{-1}}\leq C\|g\|.
		\end{equation}
		where $C$ is positive constant independent of $\tau$ and $h$.
	\end{Lemma}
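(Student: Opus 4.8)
The plan is to prove both bounds by reducing the $H^{-1}$ pairing to an $L^2$ inner product of gradients through an inverse Laplacian. The mean-zero function in each statement---namely $v \in \mathring{S}_h^{r-1} \subset L_0^2(\Omega)$ in the first inequality and $g \in L_0^2(\Omega)$ in the second---admits a Neumann--Poisson representation: for a datum $f \in L_0^2(\Omega)$, let $\psi_f \in H^1(\Omega) \cap L_0^2(\Omega)$ solve $-\Delta\psi_f = f$ in $\Omega$ with $\partial\psi_f/\partial\mathbf{n} = 0$ on $\partial\Omega$, so that $\|f\|_{H^{-1}} = \|\nabla\psi_f\|$. This is the natural dual norm in the Cahn--Hilliard setting, since the phase field and chemical potential carry homogeneous Neumann data, and this choice is precisely what makes the boundary term vanish in the integration by parts below.

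For the first inequality I would take $\psi = \psi_v$ and write $(g,v) = (g, -\Delta\psi)$. Integration by parts gives $(g,v) = (\nabla g, \nabla\psi) - \int_{\partial\Omega} g\,(\partial\psi/\partial\mathbf{n})\,ds$, and the boundary integral vanishes because $\partial\psi/\partial\mathbf{n} = 0$; only the Neumann condition on $\psi$ is used, so $g \in H^1(\Omega)$ (rather than $H_0^1$) suffices. A single application of Cauchy--Schwarz then yields $|(g,v)| = |(\nabla g, \nabla\psi)| \leq \|\nabla g\|\,\|\nabla\psi\| = \|\nabla g\|\,\|v\|_{H^{-1}}$, which is the claim.

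For the second inequality I would use the same representation with $f = g$ and test the defining equation against $\psi = \psi_g$ itself: $\|\nabla\psi\|^2 = (-\Delta\psi, \psi) = (g, \psi) \leq \|g\|\,\|\psi\|$. Since $\psi$ has zero mean, the Poincar\'e--Wirtinger inequality gives $\|\psi\| \leq C\|\nabla\psi\|$, and dividing through by $\|\nabla\psi\|$ produces $\|g\|_{H^{-1}} = \|\nabla\psi\| \leq C\|g\|$. The only genuine point requiring care is the bookkeeping of the mean-zero hypotheses, which guarantees that the Neumann--Poisson problem is well posed and that $\|\cdot\|_{H^{-1}}$ is the correct dual norm; once the representation $\|f\|_{H^{-1}} = \|\nabla\psi_f\|$ is established, both estimates follow immediately from integration by parts, Cauchy--Schwarz, and Poincar\'e, so I anticipate no serious obstacle.
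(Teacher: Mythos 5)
Your proof is correct and coincides with the standard argument in the sources the paper itself defers to (the paper omits the proof, citing Lemma 3.2 of Feng 2006 and the Diegel--Feng--Wise 2015 paper): both inequalities follow from the inverse Neumann Laplacian representation $\|f\|_{H^{-1}}=\|\nabla\psi_f\|$ for mean-zero $f$, integration by parts (or, avoiding the $H^2$-regularity technicality, the weak formulation of the Neumann problem directly), Cauchy--Schwarz, and Poincar\'e--Wirtinger. Your observation that $\|\cdot\|_{H^{-1}}$ must here be the mean-zero (Neumann) dual norm rather than the dual of $H_0^1$ is exactly right --- with the Dirichlet-dual norm the first inequality would fail for functions concentrated near $\partial\Omega$ --- and this is precisely the definition under which the cited lemma is stated and proved.
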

	The proof of the lemma \ref{Lemma_0402} is similar to Lemma 3.2 of  
	\cite{2006_FengXiaobing_FullydiscretefiniteelementapproximationsoftheNavierStokesCahnHilliarddiffuseinterfacemodelfortwophasefluidflows}, here we omit it. 
	The following inequalities hold \cite{2007_HeYinnian_SunWeiwei_Stability_and_convergence_of_the_Crank_Nicolson_Adams_Bashforth_scheme_for_the_time_dependent_Navier_Stokes_equations}
	\begin{flalign}
		\label{eq_basic_inequalities_001}
		&\|\phi\|_{L^r}\leq C\|\phi\|_{H^1},~\forall \phi\in H^1(\Omega),~ r\in [2,\infty],\\
		\label{eq_basic_inequalities_002}
		&\|\phi\|_{L^{\infty}}\leq C\|\phi\|^{\frac{1}{2}}\left(\|\phi\|^2+\|\Delta \phi\|^2\right)^{\frac{1}{2}}, \forall \phi \in H^2(\Omega),\\
		\label{eq_basic_inequalities_003}
		&\|\boldsymbol{v}\|_{L^r}\leq C\|\nabla\boldsymbol{v}\|^2, ~(2\leq r\leq 6),
		\quad\|\boldsymbol{v}\|_{L^4}\leq C\|\boldsymbol{v}\|^{\frac{1}{2}}\|\nabla\boldsymbol{v}\|^{\frac{1}{2}}, ~\forall \boldsymbol{v}\in \mathbf{H}_0^1(\Omega),\\
		\label{eq_basic_inequalities_004}
		&\|\boldsymbol{v}\|_{H^2}\leq C\|A\boldsymbol{v}\|^2, \quad
		\|\boldsymbol{v}\|_{L^\infty}\leq C\|\boldsymbol{v}\|^{\frac{1}{2}}\|A\boldsymbol{v}\|^{\frac{1}{2}},
		\forall \boldsymbol{v}\in \mathbf{H}^2(\Omega)\cap\mathbf{H}_0^1(\Omega),
	\end{flalign}
	where $A$ is the continuous Stokes operator \cite{1986_Girault_Vivette_Finite_element_methods_for_Navier_Stokes_equations}.
	
	We will frequently use the following discrete version of the Gr\"{o}nwall lemma
	\cite{1990_Heywood_Finite_element_approximation_of_the_nonstationary_Navier_Stokes_problem_IV_Error_analysis_for_second_order_time_discretization}:
	\begin{Lemma}[discrete Gr\"{o}nwall inequality \cite{1990_Heywood_Finite_element_approximation_of_the_nonstationary_Navier_Stokes_problem_IV_Error_analysis_for_second_order_time_discretization}]
		\label{lemma_discrete_Gronwall_inequation}
		For all $0 \leq n \leq m$, let $a_n, b_n, c_n, d_n, \tau, C \geq 0$ such that
		\begin{equation}
			a_m+\tau\sum_{n=0}^{m}b_n\leq \tau\sum_{n=0}^{m}d_na_n+\tau\sum_{n=0}^{m}c_n+C,
		\end{equation}
		suppose that $\tau d_n<1$, for all $n\geq 0$, and set $\sigma_n=(1-\tau d_n)^{-1}$. Then
		\begin{equation}
			a_{m}+\tau\sum_{n=0}^{m} b_n \leq \exp \left(\tau\sum_{n=0}^m \sigma_nd_n\right)\left\{a_0+\left(b_0+c_0\right) \tau+\tau\sum_{n=1}^{m} c_n \right\} .
		\end{equation}
	\end{Lemma}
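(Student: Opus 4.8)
The plan is to treat this as the Heywood--Rannacher discrete Gr\"onwall inequality and to argue by (strong) induction on the upper index $m$, after first absorbing the ``implicit'' top-index term $\tau d_m a_m$ that appears on the right-hand side of the hypothesis. The key structural remark is that the only obstruction to a direct estimate is the presence of $a_m$ on both sides; since $\tau d_n<1$ is assumed, the factor $1-\tau d_m$ is strictly positive and may be inverted, which is precisely the role of the weight $\sigma_m=(1-\tau d_m)^{-1}\ge 1$.

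First I would split off the $n=m$ summand of $\tau\sum_{n=0}^m d_n a_n$ and move it to the left, rewriting the hypothesis as
\[
(1-\tau d_m)a_m+\tau\sum_{n=0}^m b_n\le \tau\sum_{n=0}^{m-1}d_n a_n+\tau\sum_{n=0}^m c_n+C .
\]
Multiplying by $\sigma_m$ and using $\sigma_m\ge 1$ to retain the full dissipation sum $\tau\sum_{n=0}^m b_n$ on the left yields the working inequality
\[
a_m+\tau\sum_{n=0}^m b_n\le \sigma_m\,\tau\sum_{n=0}^{m-1}d_n a_n+\sigma_m\Big(\tau\sum_{n=0}^m c_n+C\Big),
\]
in which $a_m$ is now controlled purely by the earlier $a_n$ and by the monotone forcing $F_m:=\tau\sum_{n=0}^m c_n+C$, so the induction can close.

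For the accumulation step I would track the partial sums $P_j:=\tau\sum_{n=0}^{j-1}d_n a_n$, with $P_0=0$. Dropping the nonnegative $b$-sum gives $a_j\le \sigma_j(P_j+F_j)$, hence the linear recursion $P_{j+1}=P_j+\tau d_j a_j\le (1+\tau\sigma_j d_j)P_j+\tau\sigma_j d_j F_m$ (using $F_j\le F_m$ for $j\le m$). Solving it and invoking the telescoping identity $\sum_{k=0}^{j}x_k\prod_{l=k+1}^{j}(1+x_l)=\prod_{l=0}^{j}(1+x_l)-1$ with $x_k=\tau\sigma_k d_k$ bounds $P_m\le F_m\big(\prod_{n=0}^{m-1}(1+\tau\sigma_n d_n)-1\big)$. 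Substituting back gives $a_m+\tau\sum_{n=0}^m b_n\le \sigma_m F_m\prod_{n=0}^{m-1}(1+\tau\sigma_n d_n)$, and the elementary bounds $1+t\le e^t$ (so $\prod_{n}(1+x_n)\le\exp(\sum_n x_n)$) together with the identity $\sigma_m=1+\tau\sigma_m d_m$, which gives $\sigma_m\le\exp(\tau\sigma_m d_m)$, collapse the product into the single factor $\exp\big(\tau\sum_{n=0}^m\sigma_n d_n\big)$. The remaining forcing $F_m$ is then regrouped into the bracket $a_0+(b_0+c_0)\tau+\tau\sum_{n=1}^m c_n$ by collecting the constant with the $n=0$ contribution.

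I expect the routine parts --- isolating $a_m$, inverting $1-\tau d_m$, and the base case $m=0$, which follows at once from $\exp(\cdot)\ge 1$ and nonnegativity --- to be painless. The main obstacle is the bookkeeping of the accumulation: correctly forming and solving the linear recursion for $P_j$, applying the telescoping product identity, and in particular verifying the absorption $\sigma_m\le\exp(\tau\sigma_m d_m)$ needed to promote the residual weight into the $n=m$ slot of the exponent. Some care is also required because the forcing $F_j$ grows with $j$; the uniform bound $F_j\le F_m$ for $j\le m$ is exactly what keeps the recursion linear and the final exponential constant clean.
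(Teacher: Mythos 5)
The paper does not prove this lemma at all; it is quoted with a citation to Heywood--Rannacher, so your derivation is the only proof in play. Your core argument is sound and is essentially the standard proof of that result: isolating the top-index term, inverting $1-\tau d_m$, bounding the partial sums $P_j=\tau\sum_{n=0}^{j-1}d_na_n$ through the linear recursion and the telescoping product identity, and collapsing the product via $1+t\le e^t$ together with $\sigma_m=1+\tau\sigma_m d_m$. (One point you should make explicit: the step $a_j\le\sigma_j(P_j+F_j)$ uses the hypothesis at every index $j\le m$, not only at the final index $m$; this is how the lemma is meant to be read, and how Heywood--Rannacher state it, but the paper's phrasing obscures it.) What your chain of estimates actually establishes is
\[
a_m+\tau\sum_{n=0}^m b_n\le \exp\Big(\tau\sum_{n=0}^m\sigma_nd_n\Big)\Big(\tau\sum_{n=0}^m c_n+C\Big).
\]

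The genuine gap is your final sentence, the ``regrouping'' of $F_m=\tau\sum_{n=0}^m c_n+C$ into the bracket $a_0+(b_0+c_0)\tau+\tau\sum_{n=1}^m c_n$. This would require $C\le a_0+\tau b_0$, which does not follow from anything assumed: the $j=0$ hypothesis gives $a_0+\tau b_0\le \tau d_0a_0+\tau c_0+C$, an inequality pointing the wrong way, and $C$ may be arbitrarily large while $a_0=b_0=0$. Indeed, no argument can close this gap, because the lemma as printed in the paper is false: take $d_n=c_n=0$ for all $n$, $a_0=b_0=0$, $C>0$, and $a_m=C/2$; the hypothesis holds (for every index), yet the stated conclusion asserts $C/2\le 0$. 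The printed statement is a garbled quotation of Heywood--Rannacher's Lemma 5.1, whose right-hand bracket is $\tau\sum_{n=0}^m c_n+C$ --- exactly what you proved. So you have given a correct proof of the correct lemma; the only flaw is the unjustified last step, and it was forced on you by the paper's erroneous statement of the conclusion.
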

	Throughout the manuscript we use $C$ or $c$, with or without subscript, to denote a positive constant independent of discretization parameters that could have uncertain values in different places. 
	
	\section{The numerical scheme}\label{section_the_numerical_schemes}
	In the section, we  construct first-order fully decoupled scheme,
	and prove that the numerical scheme satisfies energy stabilization.
	For $r\geq 2$, we assume that the solutions to the CHNS model \eqref{eq_chns_equations} in this manuscript
	exist and satisfy the following regularity assumption:
	\begin{equation}
		\label{eq_varibles_satisfied_regularities}
		\begin{aligned}
			&\phi\in L^\infty\left(0,T;H^{r+1}(\Omega)\right),~
				\phi_t\in L^\infty\left(0,T;H^1(\Omega)\right)\cap L^2\left(0,T;H^1(\Omega)\right),
				\phi_{tt}\in L^2\left(0,T;L^2(\Omega)\right),\\
			&\mu\in L^\infty\left(0,T;H^{r+1}(\Omega)\right),~\mu_t\in L^{\infty}\left(0,T;H^1(\Omega)\right),
			~p\in L^\infty\left(0,T;H^{r}(\Omega)\right),~p_t\in L^2\left(0,T;H^r(\Omega)\right),\\
			&\mathbf{u}\in L^\infty\left(0,T;\mathbf{H}^{r+1}(\Omega)\right),~
				\mathbf{u}_t\in  L^\infty\left(0,T;\mathbf{H}^1(\Omega)\right)\cap L^2\left(0,T;\mathbf{H}^{r+1}(\Omega)\right), ~
				\mathbf{u}_{tt}\in L^2\left(0,T;\mathbf{H}^{-1}(\Omega)\right).
		\end{aligned}
	\end{equation}
	\subsection{Fully discrete scheme}
	In the subsection, we construct a decoupled, first-order and fully discrete finite elements method based on Euler implicit-explict scheme for solving the CHNS system. For any test functions $w\in H^1(\Omega)$, $\varphi\in H^1(\Omega)$, $\mathbf{v}\in \mathbf{H}_0^1(\Omega)$, $q \in L_0^2(\Omega)$,
	the weak solutions $\left(\phi,\mu,\mathbf{u},p\right)$ of \eqref{eq_chns_equations} satisfy the following variational 
	forms:
	\begin{flalign}
		\label{eq_variational_forms_phi}
		\left(\partial_t\phi,w\right)+M\left(\nabla\mu,\nabla w\right)+b\left(\phi,\mathbf{u},w\right)&=0,\\
		\label{eq_variational_forms_mu}
		\left(\mu,\varphi\right)-\lambda\left(\nabla\phi,\nabla\varphi\right)-\frac{\lambda}{\epsilon^2}\left(\phi^3-\phi,\varphi\right)&=0,\\
		\label{eq_variational_forms_ns}
		\left(\partial_t\mathbf{u},\mathbf{v}\right)+\nu\left(\nabla\mathbf{u},\nabla\mathbf{v}\right)
		+\boldsymbol{B}\left(\mathbf{u},\mathbf{u},\mathbf{v}\right)+\left(\nabla p,\mathbf{v}\right)-b\left(\phi,\mathbf{v},\mu\right)&=0,\\
		\label{eq_variational_forms_incompressible_condition}
		\left(\nabla\cdot\mathbf{u},q\right)&=0.
	\end{flalign}
	Let $N$ be a positive integer and $\left\{t^n=n \tau\right\}_{n=0}^N$ denotes a uniform partition of the time interval $[0, T]$ with a step size $\tau=T / N$. For $n\geq 0$, choosing 
	the initial conditions $\left(\phi_h^0,\mu_h^0,\mathbf{u}_h^0,p_h^0\right)$ and $\left(\phi_h^n,\mu_h^n,\mathbf{u}_h^n,p_h^n\right)$, and updating  $\left(\phi_h^{n+1},\mu_h^{n+1},\tilde{\mathbf{u}}_h^{n+1},\mathbf{u}_h^{n+1},p_h^{n+1}\right)$,
	we construct the following fully decoupled, energy stable scheme for the CHNS system.
	\\
	$\mathbf{Step 1.}$ Find $\left(\phi_h^{n+1},\mu_h^{n+1}\right)\in \left(S_h^r, S_h^r\right)$ such that
	\begin{flalign}
		\label{eq_fully_discrete_CHNS_scheme_phi}
		\left(\delta_\tau\phi_h^{n+1},w_h\right)+b\left(\phi_h^{n+1},\mathbf{u}_h^{n},w_h\right)+M\left(\nabla\mu_h^{n+1},\nabla w_h\right)=0,&\\
		\label{eq_fully_discrete_CHNS_scheme_mu}
		\left(\mu_h^{n+1},\varphi_h\right)-\lambda\left(\nabla\phi_h^{n+1},\nabla\varphi_h\right)-\frac{\lambda}{\epsilon^2}\left(\left(\phi_h^{n+1}\right)^3-\phi_h^n,\varphi_h\right)=0,&\\
		\label{eq_fully_discrete_CHNS_scheme_boundary_conditions}
		\partial_n\phi_h^{n+1}|_{\partial\Omega}=0,~	\partial_n\mu_h^{n+1}|_{\partial\Omega}=0.
	\end{flalign}
	$\mathbf{Step 2.}$ Find $\tilde{\mathbf{u}}_h^{n+1}\in \mathbf{X}_h^{r}$ such that
	\begin{flalign}
		\label{eq_fully_discrete_CHNS_scheme_tilde_u}
		\left(\frac{\tilde{\mathbf{u}}_h^{n+1}-\mathbf{u}_h^n}{ \tau},\mathbf{v}_h\right)
		+\boldsymbol{B}\left(\mathbf{u}_h^n,\tilde{\mathbf{u}}_h^{n+1},\mathbf{v}_h\right)+\nu\left(\nabla\tilde{\mathbf{u}}_h^{n+1},\nabla\mathbf{v}_h\right)
		+\left(\nabla p_h^n,\mathbf{v}_h\right)-b\left(\phi_h^{n+1},\mathbf{v}_h,\mu_h^{n+1}\right)=0,\\
		\tilde{\mathbf{u}}_h^{n+1}|_{\partial\Omega}=0.
	\end{flalign}
	$\mathbf{Step 3.}$
	Find $\left(p_h^{n+1},\mathbf{u}_h^{n+1}\right)\in \left(\mathring{S}_h^{r-1},\mathbf{X}_h^{r}\right)$ such that
	\begin{flalign}
		\label{eq_fully_discrete_CHNS_scheme_tilde_u_u1_p_p1}
		\frac{\mathbf{u}_h^{n+1}-\tilde{\mathbf{u}}_h^{n+1}}{\tau}+\nabla\left(p_h^{n+1}-p_h^n\right)=0,\\
		\label{eq_fully_discrete_CHNS_scheme_tilde_incompressible_condition}
		\left(\nabla\cdot\mathbf{u}_h^{n+1},q_h\right)=0,~\mathbf{u}_h^{n+1}\cdot\mathbf{n}|_{\partial\Omega}=0,
	\end{flalign}
	hold for all $\left(w_h,\varphi_h,\mathbf{v}_h,q_h\right)\in\mathbf{\mathcal{X}}_h^r$, and $n=0,1,2,\cdots,N-1$, where $\phi_h^0=R_h\phi^0,~\mathbf{u}_h^0=\boldsymbol{I}_h\mathbf{u}^0.$
	For simplicity of description, we perform the fully discretization to prove that our numerical scheme satisfies energy stabilization. For the convenience of the following theoretical analysis, we set the parameters $\lambda,M,\nu$ to 1.
	
	\subsection{Unique solvability and energy stability}
	For the fully discrete scheme \eqref{eq_fully_discrete_CHNS_scheme_phi}-\eqref{eq_fully_discrete_CHNS_scheme_tilde_incompressible_condition}, we assume that the initial solutions satisfly the following boundedness:
	\begin{equation}\label{eq_initial_value_boundedness}
		\left(F(\phi_h^0),1\right)+\|\phi_h^0\|_{H^1}+\|\mathbf{u}_h^0\|\leq C,
	\end{equation}
	where $C$ is a constant independent of mesh size $h$.
	
	In this subsection, we establish the existence of solutions for the fully discrete scheme \eqref{eq_fully_discrete_CHNS_scheme_phi}-\eqref{eq_fully_discrete_CHNS_scheme_tilde_incompressible_condition}. This is achieved by applying the Leray–Schauder fixed point theorem, and we also prove the uniqueness of the numerical solutions. Given that the scheme \eqref{eq_fully_discrete_CHNS_scheme_phi}-\eqref{eq_fully_discrete_CHNS_scheme_tilde_incompressible_condition} is implemented in a stepwise, decoupled manner, the existence of a solution is demonstrated incrementally. For the \textbf{Step 2} and \textbf{Step 3} of the discrete scheme, the proofs of existence and uniqueness are standard results; thus, we omit the detailed discussions here. Instead, our focus is on proving the existence and uniqueness of the solution for \textbf{Step 1}, specifically for the fully discrete scheme \eqref{eq_fully_discrete_CHNS_scheme_phi}-\eqref{eq_fully_discrete_CHNS_scheme_mu} corresponding to the CH equation.
		\begin{Lemma}
		[\cite{2019_HeXiaoming_A_diffuse_interface_model_and_semi_implicit_energy_stable_finite_element_method_for_two_phase_magnetohydrodynamic_flows}]
		\label{lemma_fixed_point}
		Let $\mathcal{G}$ denote a compact mapping from a Banach space $B$ into itself, and assume that there exists a constant $C$ such that 
		$
		\|x\|<C
		$ for all $x\in B$ and $\alpha\in [0,1]$ satisfying $x=\alpha\mathcal{G}x$. Thus,  $\mathcal{G}$ has a fixed point.
	\end{Lemma}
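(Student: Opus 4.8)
The plan is to deduce this Leray--Schauder-type alternative from Schauder's fixed point theorem by a standard retraction-and-homotopy argument, so that the only nontrivial input is Schauder's theorem for a compact continuous self-map of a closed, bounded, convex subset of a Banach space (which itself rests on Brouwer's theorem). First I would fix the radius $C$ furnished by the hypothesis and restrict attention to the closed ball $\overline{B}_C=\{x\in B:\|x\|\le C\}$, which is closed, bounded, convex and nonempty. Introduce the radial retraction $\rho:B\to\overline{B}_C$ defined by $\rho(x)=x$ when $\|x\|\le C$ and $\rho(x)=Cx/\|x\|$ when $\|x\|>C$; one checks that $\rho$ is continuous (including across the sphere $\|x\|=C$) and maps $B$ onto $\overline{B}_C$.

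Next I would consider the composed map $\mathcal{H}=\rho\circ\mathcal{G}:\overline{B}_C\to\overline{B}_C$. Because $\mathcal{G}$ is compact, i.e.\ continuous and sending bounded sets to relatively compact sets, and $\rho$ is continuous, the image $\mathcal{H}(\overline{B}_C)=\rho(\mathcal{G}(\overline{B}_C))$ is relatively compact and $\mathcal{H}$ is a continuous self-map of $\overline{B}_C$. Schauder's fixed point theorem then produces a point $x^\ast\in\overline{B}_C$ with $x^\ast=\mathcal{H}(x^\ast)=\rho(\mathcal{G}(x^\ast))$.

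The crucial step, where the a priori bound does the real work, is to upgrade $x^\ast$ to a genuine fixed point of $\mathcal{G}$. If $\|\mathcal{G}(x^\ast)\|\le C$, then $\rho$ acts as the identity on $\mathcal{G}(x^\ast)$ and $x^\ast=\mathcal{G}(x^\ast)$, which is the desired conclusion. Otherwise $\|\mathcal{G}(x^\ast)\|>C$, and the definition of $\rho$ gives $x^\ast=\alpha\,\mathcal{G}(x^\ast)$ with $\alpha=C/\|\mathcal{G}(x^\ast)\|\in(0,1)$, whence $\|x^\ast\|=C$. But $x^\ast$ now solves $x=\alpha\mathcal{G}x$ for some $\alpha\in[0,1]$, so the hypothesis forces $\|x^\ast\|<C$, contradicting $\|x^\ast\|=C$. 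Hence the second alternative is impossible and $x^\ast$ is a fixed point of $\mathcal{G}$.

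The \emph{main obstacle} is precisely this last contradiction: it succeeds only because the a priori estimate is stated with the strict inequality $\|x\|<C$, so that the boundary case $\|x^\ast\|=C$ arising from the radial truncation is excluded; with a non-strict bound the argument would stall. The remaining points, namely the continuity of $\rho$ on the sphere and the stability of relative compactness under composition with $\rho$, are routine verifications rather than real difficulties.
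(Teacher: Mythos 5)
Your proof is correct. The paper itself does not prove this lemma at all: it is quoted verbatim from the cited reference (He et al.\ 2019) as a known form of the Leray--Schauder fixed point theorem, and is then used as a black box in Appendix A to establish existence for the discrete Cahn--Hilliard step. So there is no in-paper argument to compare against; what you have supplied is the standard retraction-plus-Schauder proof of the Schaefer/Leray--Schauder alternative, and it is sound. The composition $\rho\circ\mathcal{G}$ is indeed a continuous self-map of the closed ball with relatively compact image (the continuous image of the compact closure of $\mathcal{G}(\overline{B}_C)$ contains it), so Schauder applies, and your dichotomy at the fixed point is airtight. Your closing remark is also the right one to emphasize: the argument hinges on the a priori bound being strict, $\|x\|<C$, since the truncated fixed point produced by $\rho$ sits exactly on the sphere $\|x^\ast\|=C$; this is precisely the form of the bound the paper verifies in its application (inequality \eqref{eq_H1_phi_mu_boundedness} in Appendix A), so the lemma as stated is the version the paper actually needs.
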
 
	\begin{Lemma}\label{lemma_solutions_boundedness}
		Suppose the initial solutions boundedness \eqref{eq_initial_value_boundedness} holds. For given $\tau, h>0$,
		there exists a solution $\left(\phi_h^{n+1},\mu_h^{n+1},\mathbf{u}_h^{n+1},p_h^{n+1}\right)$ to the fully discrete scheme \eqref{eq_fully_discrete_CHNS_scheme_phi}-\eqref{eq_fully_discrete_CHNS_scheme_tilde_incompressible_condition}.
	\end{Lemma}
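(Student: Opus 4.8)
\emph{Reduction to Step 1.} Because the scheme \eqref{eq_fully_discrete_CHNS_scheme_phi}--\eqref{eq_fully_discrete_CHNS_scheme_tilde_incompressible_condition} is advanced sequentially, I would establish solvability one sub-step at a time and reduce the whole claim to \textbf{Step 1}. For \textbf{Step 2}, with $(\phi_h^{n+1},\mu_h^{n+1},\mathbf{u}_h^{n},p_h^{n})$ already known, equation \eqref{eq_fully_discrete_CHNS_scheme_tilde_u} is a linear variational problem for $\tilde{\mathbf{u}}_h^{n+1}\in\mathbf{X}_h^{r}$ whose bilinear form $\frac1\tau(\cdot,\cdot)+\boldsymbol{B}(\mathbf{u}_h^n,\cdot,\cdot)+\nu(\nabla\cdot,\nabla\cdot)$ is coercive, since the skew-symmetric part $\boldsymbol{B}(\mathbf{u}_h^n,\cdot,\cdot)$ vanishes on the diagonal, so Lax--Milgram applies. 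For \textbf{Step 3}, equations \eqref{eq_fully_discrete_CHNS_scheme_tilde_u_u1_p_p1}--\eqref{eq_fully_discrete_CHNS_scheme_tilde_incompressible_condition} form a discrete pressure-correction (Darcy-type) saddle-point problem whose unique solvability follows from the inf--sup condition \eqref{eq_inf_sup_condition}. Hence only the nonlinear Cahn--Hilliard sub-step \eqref{eq_fully_discrete_CHNS_scheme_phi}--\eqref{eq_fully_discrete_CHNS_scheme_mu} requires a dedicated argument.

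\emph{Fixed-point construction.} I would apply Lemma \ref{lemma_fixed_point}. Define $\mathcal{G}:S_h^r\times S_h^r\to S_h^r\times S_h^r$ by freezing the cubic nonlinearity and the convective term at the input: given $(\bar\phi,\bar\mu)$, let $\mathcal{G}(\bar\phi,\bar\mu)=(\phi,\mu)$ solve the \emph{linear} system obtained from \eqref{eq_fully_discrete_CHNS_scheme_phi}--\eqref{eq_fully_discrete_CHNS_scheme_mu} by replacing $b(\phi_h^{n+1},\mathbf{u}_h^n,w_h)$ with $b(\bar\phi,\mathbf{u}_h^n,w_h)$ and $(\phi_h^{n+1})^3$ with $\bar\phi^3$. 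The operator acting on $(\phi,\mu)$ is then the standard linear mixed Cahn--Hilliard system, which is uniquely solvable; $\mathcal{G}$ is continuous, and since $\dim S_h^r<\infty$ it is compact. A fixed point of $\mathcal{G}$ is exactly a solution of \textbf{Step 1}. To invoke Lemma \ref{lemma_fixed_point} it remains to bound, uniformly in $\alpha\in[0,1]$, every $(\phi,\mu)$ with $(\phi,\mu)=\alpha\mathcal{G}(\phi,\mu)$; such a pair satisfies \eqref{eq_fully_discrete_CHNS_scheme_phi}--\eqref{eq_fully_discrete_CHNS_scheme_mu} with the convective and cubic/source contributions multiplied by $\alpha$, a rescaling that leaves the structure of the energy estimate below intact.

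\emph{A priori bound.} I would test the (rescaled) phase equation \eqref{eq_fully_discrete_CHNS_scheme_phi} with $w_h=\tau\mu$ and the (rescaled) chemical-potential equation \eqref{eq_fully_discrete_CHNS_scheme_mu} with $\varphi_h=\phi-\alpha\phi_h^n$, then subtract to cancel the cross term in $(\phi-\alpha\phi_h^n,\mu)$. The gradient contribution yields $\frac12\|\nabla\phi\|^2-\frac12\|\nabla\phi_h^n\|^2$ after Young, and the convex-splitting treatment of $F$ supplies the quartic coercivity $\frac{\alpha}{2\epsilon^2}\|\phi\|_{L^4}^4$ up to data in $\|\phi_h^n\|_{L^4}$. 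The only indefinite term is the convective coupling $\alpha\tau\,b(\phi,\mathbf{u}_h^n,\mu)$; integrating by parts and using $\mathbf{u}_h^n|_{\partial\Omega}=0$ turns it into $-\alpha\tau\big[(\phi\,\mathbf{u}_h^n,\nabla\mu)+(\phi\mu,\nabla\cdot\mathbf{u}_h^n)\big]$, and Young's inequality together with \eqref{eq_trilinear_forms_bound} absorbs it into the dissipation $\tau\|\nabla\mu\|^2$ and the quartic term $\frac{\alpha}{2\epsilon^2}\|\phi\|_{L^4}^4$, the leftover being controlled by the fixed data $\|\mathbf{u}_h^n\|_{L^4}$ and $\|\phi_h^n\|$; testing \eqref{eq_fully_discrete_CHNS_scheme_mu} with $\varphi_h\equiv1$ bounds the mean of $\mu$, after which Poincar\'e upgrades $\|\nabla\mu\|$ to a full $\|\mu\|_{H^1}$ bound. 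Since $L^4\hookrightarrow L^2$ controls $\|\phi\|$ by $\|\phi\|_{L^4}$, these estimates give a bound on $\|\phi\|_{H^1}+\|\mu\|_{H^1}$ that is independent of $\alpha$, depending only on $\tau,h$ and the level-$n$ data, which are finite by \eqref{eq_initial_value_boundedness} and induction; a fixed-mesh inverse inequality \eqref{eq_inverse_inequalities001} can be invoked if any residual $\|\nabla\phi\|$-contribution from the convective term has to be reabsorbed.

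\emph{Conclusion and main obstacle.} With the uniform bound in hand, Lemma \ref{lemma_fixed_point} produces a fixed point, i.e.\ a solution $(\phi_h^{n+1},\mu_h^{n+1})$ of \textbf{Step 1}; combined with the solvability of \textbf{Steps 2--3} this yields the full tuple $(\phi_h^{n+1},\mu_h^{n+1},\mathbf{u}_h^{n+1},p_h^{n+1})$ and proves the lemma. The main difficulty I anticipate is precisely the indefinite convective term $b(\phi_h^{n+1},\mathbf{u}_h^n,\mu_h^{n+1})$: unlike the purely dissipative Cahn--Hilliard energy identity it mixes $\phi$ and $\mu$ and carries no favourable sign, so the estimate closes only because the diffusion $\tau\|\nabla\mu\|^2$ and the convex-splitting quartic coercivity are strong enough to absorb it after integration by parts and Young's inequality, while every constant is kept independent of the homotopy parameter $\alpha$.
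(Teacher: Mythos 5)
Your proposal follows the paper's own route almost step for step: the same reduction of the lemma to the Cahn--Hilliard sub-step (Steps 2--3 dismissed as standard linear problems via Lax--Milgram and the inf--sup condition \eqref{eq_inf_sup_condition}), the same fixed-point operator that freezes the cubic term and the convection at the input (this is exactly the paper's map \eqref{eq_CH_discrete_point}), the same Leray--Schauder lemma (Lemma \ref{lemma_fixed_point}), and essentially the same test functions ($\tau\mu$ against the phase equation and $\phi-\alpha\phi_h^n$ against the chemical-potential equation, with cancellation of the cross terms). The only substantive departure is how the convective coupling is absorbed in the $\alpha$-uniform a priori bound, and that is where your write-up has a genuine gap.

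For the principal part $\alpha\tau\left(\phi\,\mathbf{u}_h^n,\nabla\mu\right)$ your absorption does work with no condition on $\tau$: Young gives $\tfrac{\tau}{2}\|\nabla\mu\|^2+C\alpha^2\tau\|\phi\|_{L^4}^2\|\mathbf{u}_h^n\|_{L^4}^2$, and the second piece splits into $\tfrac{\alpha}{8\epsilon^2}\|\phi\|_{L^4}^4$ plus the data term $C\tau^2\|\mathbf{u}_h^n\|_{L^4}^4$; this is in fact cleaner than the paper, which estimates $\|\phi\,\mathbf{u}_h^n\|\leq\|\phi\|\|\mathbf{u}_h^n\|_{L^\infty}$ and then must invoke the time-step restriction \eqref{eq_assumption_conditions} to absorb the result into $\alpha\|\phi-\alpha\phi_h^n\|^2$. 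The gap is in the remainder $\alpha\tau\left(\phi\mu,\nabla\cdot\mathbf{u}_h^n\right)$, which you rightly keep because $\mathbf{u}_h^n$ is only discretely divergence-free. Estimating it requires $\|\mu\|$ and not just $\|\nabla\mu\|$, and your proposed fix (test \eqref{eq_fully_discrete_CHNS_scheme_mu} with $\varphi_h\equiv 1$, then Poincar\'e) gives for the mean value $\bar\mu=|\Omega|^{-1}(\mu,1)$ only $|\bar\mu|\leq C\alpha\epsilon^{-2}\left(\|\phi\|_{L^3}^3+\|\phi_h^n\|\right)$: the mean is \emph{cubic in the unknown}. Feeding this back, the mean contribution of the divergence term is of size $C\alpha^2\tau\epsilon^{-2}\|\nabla\mathbf{u}_h^n\|\,\|\phi\|_{L^4}^4$, i.e.\ exactly the same quartic order as the coercive term $\tfrac{\alpha}{2\epsilon^2}\|\phi\|_{L^4}^4$, so it can be absorbed only under a smallness condition of the form $\tau\|\nabla\mathbf{u}_h^n\|\leq c$; neither the dissipation (which carries a factor $\tau$ and controls only $\nabla\mu$) nor the quartic term can swallow it for an arbitrary given $\tau$, contrary to your claim that the bound closes uniformly in $\alpha$ for every $\tau,h>0$. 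For context: the paper dodges this exact term by defining its fixed-point map \eqref{eq_CH_discrete_point} with the convection already in the conservative form $-\left(\phi\,\mathbf{u}_h^n,\nabla w_h\right)$, so no divergence remainder ever appears (strictly, it proves existence for that variant of Step 1), and it still needs the restriction \eqref{eq_assumption_conditions}; so some time-step condition is present in the paper's own argument as well, but your specific assertion of an unconditional, $\alpha$-uniform bound is unsubstantiated as written.
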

	\begin{Lemma}\label{lemma_unique_solvability}
		For $\tau,h>0$ and $\tau\leq C\alpha/{\|\mathbf{u}_h^n\|_{L^{\infty}}^2}$, the fully discrete scheme \eqref{eq_fully_discrete_CHNS_scheme_phi}-\eqref{eq_fully_discrete_CHNS_scheme_tilde_incompressible_condition} is unique solvability.
	\end{Lemma}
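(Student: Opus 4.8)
The plan is to exploit the fully decoupled, stepwise structure of \eqref{eq_fully_discrete_CHNS_scheme_phi}--\eqref{eq_fully_discrete_CHNS_scheme_tilde_incompressible_condition}. Since existence is already supplied by Lemma \ref{lemma_solutions_boundedness}, it suffices to show that the difference of any two solutions vanishes, and because the three blocks are solved consecutively with $(\phi_h^n,\mathbf{u}_h^n,p_h^n)$ fixed, I would argue block by block: the nonlinear Cahn--Hilliard system in \textbf{Step 1} is the essential case, whereas \textbf{Step 2} and \textbf{Step 3} reduce to standard linear arguments.

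For \textbf{Step 1}, suppose $(\phi_1,\mu_1)$ and $(\phi_2,\mu_2)$ both solve \eqref{eq_fully_discrete_CHNS_scheme_phi}--\eqref{eq_fully_discrete_CHNS_scheme_mu} for the same data, and set $\psi=\phi_1-\phi_2$, $\chi=\mu_1-\mu_2$. Subtracting the two copies, the explicitly treated term $\phi_h^n$ cancels, so the concave contribution disappears and only the monotone cubic difference survives. I would then test the $\phi$-equation with $w_h=\chi$ and the $\mu$-equation with $\varphi_h=\tfrac1\tau\psi$ and add; the cross terms $\tfrac1\tau(\psi,\chi)$ cancel, leaving
\begin{equation}
\frac1\tau\|\nabla\psi\|^2+\|\nabla\chi\|^2+\frac{1}{\tau\epsilon^2}\left(\phi_1^3-\phi_2^3,\psi\right)=-b(\psi,\mathbf{u}_h^n,\chi).
\end{equation}
The convex-splitting choice makes the cubic term dissipative, $(\phi_1^3-\phi_2^3,\psi)=\int_\Omega(\phi_1^2+\phi_1\phi_2+\phi_2^2)\psi^2\,dx\ge0$, so the left-hand side controls $\tfrac1\tau\|\nabla\psi\|^2+\|\nabla\chi\|^2$.

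The main obstacle is the convective term $b(\psi,\mathbf{u}_h^n,\chi)=(\mathbf{u}_h^n\cdot\nabla\psi,\chi)$, which, unlike the velocity form $\boldsymbol{B}$, is \emph{not} skew-symmetric and hence cannot be discarded. I would bound it by $\|\mathbf{u}_h^n\|_{L^\infty}\|\nabla\psi\|\,\|\chi\|$ and split with Young's inequality into $\tfrac1{2\tau}\|\nabla\psi\|^2+\tfrac\tau2\|\mathbf{u}_h^n\|_{L^\infty}^2\|\chi\|^2$. To match the $\|\nabla\chi\|^2$ on the left I must control $\|\chi\|$: testing the $\mu$- and $\phi$-equations with a constant, together with the boundedness of $\phi_1,\phi_2$ inherited from Lemma \ref{lemma_solutions_boundedness}, shows that the means $(\chi,1)$ and $(\psi,1)$ are controlled by $\|\psi\|$, whence a Poincar\'e-type inequality gives $\|\chi\|^2\le C\|\nabla\chi\|^2+C\|\nabla\psi\|^2$. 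Under the stated restriction $\tau\le C\alpha/\|\mathbf{u}_h^n\|_{L^\infty}^2$, the factor $\tfrac\tau2\|\mathbf{u}_h^n\|_{L^\infty}^2$ is small enough that both contributions are absorbed into the left-hand side, forcing $\nabla\psi=\nabla\chi=0$; combined with the mean control this yields $\psi=\chi=0$, i.e. uniqueness of $(\phi_h^{n+1},\mu_h^{n+1})$. This is precisely the step where the time-step condition is used.

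Finally, with $(\phi_h^{n+1},\mu_h^{n+1})$ now fixed, \textbf{Step 2} \eqref{eq_fully_discrete_CHNS_scheme_tilde_u} is linear in $\tilde{\mathbf{u}}_h^{n+1}$; the skew-symmetry $\boldsymbol{B}(\mathbf{u}_h^n,\mathbf{v}_h,\mathbf{v}_h)=0$ makes the associated bilinear form coercive with lower bound $\tfrac1\tau\|\mathbf{v}_h\|^2+\nu\|\nabla\mathbf{v}_h\|^2$, so the Lax--Milgram theorem gives a unique $\tilde{\mathbf{u}}_h^{n+1}$. \textbf{Step 3} is the discrete pressure-correction projection \eqref{eq_fully_discrete_CHNS_scheme_tilde_u_u1_p_p1}--\eqref{eq_fully_discrete_CHNS_scheme_tilde_incompressible_condition}; pairing it with the discrete incompressibility constraint and invoking the inf--sup condition \eqref{eq_inf_sup_condition} determines $(\mathbf{u}_h^{n+1},p_h^{n+1})$ uniquely, the pressure being fixed up to its prescribed zero mean. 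Chaining the three steps then yields uniqueness of the full tuple $(\phi_h^{n+1},\mu_h^{n+1},\mathbf{u}_h^{n+1},p_h^{n+1})$.
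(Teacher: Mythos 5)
Your proposal is correct and takes essentially the same approach as the paper's Appendix proof: form the difference of two Step-1 solutions, test with scaled versions of $\chi$ and $\psi$ (the paper uses $w_h=\tau\bar{\mu}_h^{n+1}$, $\varphi_h=\bar{\phi}_h^{n+1}$, i.e.\ your choices multiplied by $\tau$), exploit the monotonicity of the cubic coming from the convex splitting, and absorb the convective term via the restriction $\tau\le C\alpha/\|\mathbf{u}_h^n\|_{L^\infty}^2$, with Steps 2 and 3 dismissed as standard linear problems. The only notable (and harmless) deviations are that your displayed identity actually results from subtracting, not adding, the two tested equations, and that you bound the convection as $\|\mathbf{u}_h^n\|_{L^\infty}\|\nabla\psi\|\|\chi\|$ --- which requires your explicit mean/Poincar\'e control of $\|\chi\|$ --- whereas the paper uses the integrated-by-parts form and bounds it as $\|\bar{\phi}_h^{n+1}\|\|\mathbf{u}_h^n\|_{L^\infty}\|\nabla\bar{\mu}_h^{n+1}\|$, implicitly invoking Poincar\'e for the zero-mean $\bar{\phi}_h^{n+1}$ at the absorption step.
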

	The proofs of Lemmas \ref{lemma_solutions_boundedness} and \ref{lemma_unique_solvability} are given in Appendix \ref{subsection_appendix_proof_lemma0302} and \ref{subsection_appendix_proof_lemma0303}.
	
	\begin{Remark}
		According to the assumption $\tau\leq \frac{\alpha}{C\|\mathbf{u}_h^n\|_{L^{\infty}}^2}$, the above Lemma \eqref{lemma_unique_solvability} holds. Therefore, we can derive the boundedness of the nth-time-layer velocity as follows:
		\begin{equation}
			\label{eq_Linfty_u_boundedness}
			\|\mathbf{u}_h^n\|_{L^{\infty}}\leq C\|\mathbf{u}\|_{L^{\infty}(0,T;\mathbf{L}^{\infty}(\Omega))}\leq C,
		\end{equation}
		where $C$ is a positive constant independent of $\alpha$, $\tau$, $h$, and $n$.
		Later in this paper, we will analyze the reasonableness of this condition $\tau\leq \frac{\alpha}{C\|\mathbf{u}_h^n\|_{L^{\infty}}^2}$.
	\end{Remark}
	We explain later in Remark \ref{remark_uinfty_boundedness_explanation} the feasibility of showing that the inequality holds.
	For the above scheme, we can establish the theorem of energy stabilization as follows:
	\begin{Theorem}
		\label{theorem_unconditionally_energy_stable} 
		Let $\phi_h^n\in S_h^r $ and $\mathbf{u}_h^n\in \mathbf{X}_h^{r}$ 
		and the initial values $|\phi_h^0|\leq C_1$ and $E(\phi_h^0,\mathbf{u}_h^0)\leq C_0$, i.e., the initial energy is bounded.
		For all time partitions $\tau$ and mesh size $h$ such that $ \tau\leq C\alpha/{\|\mathbf{u}_h^n\|_{L^{\infty}}^2}$ and the second step satisfies $\tau\leq Ch/\|\phi_h^{n+1}\|_{L^4}^2$, the decoupled discrete scheme \eqref{eq_fully_discrete_CHNS_scheme_phi}-\eqref{eq_fully_discrete_CHNS_scheme_tilde_incompressible_condition} is uniquely solvable and satisfies discrete energy law as follows:
		\begin{equation}
			\label{eq_semi_discrete_scheme_energy_law}
			\begin{aligned}
				\tilde{E}_h^{n+1}-\tilde{E}_h^{n}\leq -M\tau \|\nabla\mu_h^{n+1}\|^2-2\nu\tau\|\nabla\tilde{\mathbf{u}}_h^{n+1}\|^2,
			\end{aligned}
		\end{equation}
		where $\tilde{E}_h^{n+1}$ is defined by
		\begin{equation}
			\tilde{E}_h^{n+1}=\lambda\|\nabla\phi_h^{n+1}\|^2+\frac{\lambda}{2\epsilon^2}\|(\phi_h^{n+1})^2-1\|^2+\|\mathbf{u}_h^{n+1}\|^2+\tau^2\|\nabla p_h^{n+1}\|^2,
		\end{equation}
	and $C_0$ and $C_1$ are constant that does  depend on the initial value of $\phi$ and $\mathbf{u}$.
	\end{Theorem}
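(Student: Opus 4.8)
The plan is to establish the one-step law \eqref{eq_semi_discrete_scheme_energy_law} by testing each of the three sub-steps with a suitably scaled discrete unknown, adding the resulting identities, and then absorbing the single cross term that fails to cancel into the numerical dissipation under the stated step-size restriction. Unique solvability is already supplied by Lemmas \ref{lemma_solutions_boundedness} and \ref{lemma_unique_solvability} (together with the condition $\tau\le C\alpha/\|\mathbf{u}_h^n\|_{L^\infty}^2$), so I would only argue the energy identity. For the Cahn--Hilliard sub-step I take $w_h=\tau\mu_h^{n+1}$ in \eqref{eq_fully_discrete_CHNS_scheme_phi} and $\varphi_h=\phi_h^{n+1}-\phi_h^n$ in \eqref{eq_fully_discrete_CHNS_scheme_mu}, then subtract so that the two $(\mu_h^{n+1},\phi_h^{n+1}-\phi_h^n)$ contributions cancel. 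The polarization identity $2(a,a-b)=\|a\|^2-\|b\|^2+\|a-b\|^2$ converts the Dirichlet term into $\tfrac{\lambda}{2}(\|\nabla\phi_h^{n+1}\|^2-\|\nabla\phi_h^n\|^2+\|\nabla(\phi_h^{n+1}-\phi_h^n)\|^2)$, and the convex-splitting structure gives the pointwise bound $((\phi^{n+1})^3-\phi^n)(\phi^{n+1}-\phi^n)\ge\tfrac14((\phi^{n+1})^2-1)^2-\tfrac14((\phi^n)^2-1)^2$, so that $\tfrac{\lambda}{\epsilon^2}((\phi_h^{n+1})^3-\phi_h^n,\phi_h^{n+1}-\phi_h^n)$ dominates the free-energy difference $\tfrac{\lambda}{4\epsilon^2}(\|(\phi_h^{n+1})^2-1\|^2-\|(\phi_h^n)^2-1\|^2)$. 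This leaves the convective coupling $\tau b(\phi_h^{n+1},\mathbf{u}_h^n,\mu_h^{n+1})$.

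Next I take $\mathbf{v}_h=\tau\tilde{\mathbf{u}}_h^{n+1}$ in \eqref{eq_fully_discrete_CHNS_scheme_tilde_u}. The skew-symmetry built into $\boldsymbol{B}$ in \eqref{eq_trilinear_form_b_puls} forces $\boldsymbol{B}(\mathbf{u}_h^n,\tilde{\mathbf{u}}_h^{n+1},\tilde{\mathbf{u}}_h^{n+1})=0$, and the same polarization identity produces $\tfrac12(\|\tilde{\mathbf{u}}_h^{n+1}\|^2-\|\mathbf{u}_h^n\|^2+\|\tilde{\mathbf{u}}_h^{n+1}-\mathbf{u}_h^n\|^2)+\nu\tau\|\nabla\tilde{\mathbf{u}}_h^{n+1}\|^2$, together with the pressure work $\tau(\nabla p_h^n,\tilde{\mathbf{u}}_h^{n+1})$ and the force coupling $-\tau b(\phi_h^{n+1},\tilde{\mathbf{u}}_h^{n+1},\mu_h^{n+1})$. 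For the projection sub-step I rewrite \eqref{eq_fully_discrete_CHNS_scheme_tilde_u_u1_p_p1} as $\mathbf{u}_h^{n+1}+\tau\nabla p_h^{n+1}=\tilde{\mathbf{u}}_h^{n+1}+\tau\nabla p_h^n$ and square in $L^2$. Choosing $q_h=p_h^{n+1}$ in \eqref{eq_fully_discrete_CHNS_scheme_tilde_incompressible_condition} and integrating by parts (using $\mathbf{u}_h^{n+1}\cdot\mathbf{n}=0$) yields $(\mathbf{u}_h^{n+1},\nabla p_h^{n+1})=0$; this simultaneously cancels the pressure work left over from Step~2 and makes the pressure contribution telescope as $\tfrac12\tau^2(\|\nabla p_h^{n+1}\|^2-\|\nabla p_h^n\|^2)$, reproducing the $\tau^2\|\nabla p_h^{n+1}\|^2$ entry of $\tilde E_h^{n+1}$.

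Adding the Cahn--Hilliard and Navier--Stokes identities, the two coupling terms combine into the residual $\tau b(\phi_h^{n+1},\tilde{\mathbf{u}}_h^{n+1}-\mathbf{u}_h^n,\mu_h^{n+1})$, which does not vanish precisely because the CH convection is evaluated at the old velocity $\mathbf{u}_h^n$ while the NS force is tested against $\tilde{\mathbf{u}}_h^{n+1}$. Writing $\mathbf{w}=\tilde{\mathbf{u}}_h^{n+1}-\mathbf{u}_h^n$, I would control it with the trilinear bound \eqref{eq_trilinear_forms_bound}, $|b(\phi_h^{n+1},\mathbf{w},\mu_h^{n+1})|\le\|\phi_h^{n+1}\|_{L^4}\|\mathbf{w}\|_{L^4}\|\nabla\mu_h^{n+1}\|$, followed by the (componentwise) inverse inequality \eqref{eq_inverse_inequalities001}, $\|\mathbf{w}\|_{L^4}\le Ch^{-1/2}\|\mathbf{w}\|$, and Young's inequality. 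The $\|\nabla\mu_h^{n+1}\|^2$ factor is absorbed by the mobility dissipation, while the resulting $\tfrac{C\tau}{h}\|\phi_h^{n+1}\|_{L^4}^2\|\mathbf{w}\|^2$ is absorbed by the jump $\|\tilde{\mathbf{u}}_h^{n+1}-\mathbf{u}_h^n\|^2$ generated in Step~2 exactly when $\tfrac{C\tau}{h}\|\phi_h^{n+1}\|_{L^4}^2\le1$, i.e.\ under the hypothesis $\tau\le Ch/\|\phi_h^{n+1}\|_{L^4}^2$. Discarding the remaining nonnegative terms $\|\nabla(\phi_h^{n+1}-\phi_h^n)\|^2$ and the surplus velocity jump and multiplying by $2$ then gives \eqref{eq_semi_discrete_scheme_energy_law}.

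I expect the main obstacle to lie entirely in this last absorption: the trilinear term is the only place the mesh enters, and the $h^{-1/2}$ from the inverse estimate is what forces the $h$-dependent step-size constraint. The delicate bookkeeping is to split the Young constants so that, after subtracting the portion used for absorption, the surviving dissipation coefficients are still at least $M\tau$ for $\|\nabla\mu_h^{n+1}\|^2$ and $2\nu\tau$ for $\|\nabla\tilde{\mathbf{u}}_h^{n+1}\|^2$, and to verify that it is indeed $\|\phi_h^{n+1}\|_{L^4}$ (rather than a stronger norm of $\phi_h^{n+1}$) that controls the bound, so that the required restriction coincides with the one stated in the theorem.
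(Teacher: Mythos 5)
Your proposal is correct and follows essentially the same route as the paper's own proof: test the three sub-steps with (scaled) $\mu_h^{n+1}$, $\phi_h^{n+1}-\phi_h^n$, and $\tilde{\mathbf{u}}_h^{n+1}$, square the rewritten projection step, and absorb the single residual coupling term $\tau\left(\mu_h^{n+1}\nabla\phi_h^{n+1},\tilde{\mathbf{u}}_h^{n+1}-\mathbf{u}_h^n\right)$ via the $L^4$ trilinear bound, the inverse inequality $\|\cdot\|_{L^4}\leq Ch^{-1/2}\|\cdot\|$, and Young's inequality, exactly under the restriction $\tau\leq Ch/\|\phi_h^{n+1}\|_{L^4}^2$, so that $M\tau\|\nabla\mu_h^{n+1}\|^2$ and $2\nu\tau\|\nabla\tilde{\mathbf{u}}_h^{n+1}\|^2$ of dissipation survive. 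The only cosmetic differences are your factor-of-one scaling (the paper tests with $2\tau\mu_h^{n+1}$, $-2(\phi_h^{n+1}-\phi_h^n)$, $2\tau\tilde{\mathbf{u}}_h^{n+1}$), your use of the pointwise convex-splitting inequality where the paper writes the exact expansion with additional nonnegative quadratic terms, and your explicit choice $q_h=p_h^{n+1}$ to justify $\left(\mathbf{u}_h^{n+1},\nabla p_h^{n+1}\right)=0$, which the paper uses implicitly.
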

	\begin{proof}
		Letting $w_h=2\tau\mu_h^{n+1}$ in \eqref{eq_fully_discrete_CHNS_scheme_phi}, we can obtain
		\begin{equation}
			\label{eq_semi_discrete_CHNS_scheme_phi_inner_2tau_mu}
			2\left(\phi_h^{n+1}-\phi_h^n,\mu_h^{n+1}\right)+2\tau\left(\mathbf{u}_h^n\cdot\nabla\phi_h^{n+1},\mu_h^{n+1}\right)+2M\tau\|\nabla\mu_h^{n+1}\|^2=0.
		\end{equation}
		Letting  $\varphi_h=-2\left(\phi_h^{n+1}-\phi_h^n\right)$ in \eqref{eq_fully_discrete_CHNS_scheme_mu}, we can obtain
		\begin{equation}
			\label{eq_inner_product_eq_semi_discrete_CHNS_scheme_mu}
			\begin{aligned}
				-2\left(\mu_h^{n+1},\phi_h^{n+1}-\phi_h^n\right)+\lambda\left(\|\nabla\phi_h^{n+1}\|^2-\|\nabla\phi_h^{n}\|^2+\|\nabla\phi_h^{n+1}-\nabla\phi_h^n\|^2\right)&\\
				+\frac{2\lambda\tau}{\epsilon^2}
				\left(\frac{1}{4}\delta_\tau\|\left(\phi_h^{n+1}\right)^2-1\|^2+\frac{\tau}{4}
				\left(\|\delta_\tau(\phi_h^{n+1})^2\|^2+2\|\phi_h^{n+1}\delta_\tau\phi_h^{n+1}\|^2
				+2\|\delta_\tau\phi_h^{n+1}\|^2\right)\right)&=0.
			\end{aligned}
		\end{equation}
		Letting  $\mathbf{v}_h=2\tau\tilde{\mathbf{u}}_h^{n+1}$ in \eqref{eq_fully_discrete_CHNS_scheme_tilde_u},
		we derive that
		\begin{equation}
			\label{eq_semi_discrete_CHNS_scheme_tilde_u_inner_product_2delta_t}
			\|\tilde{\mathbf{u}}_h^{n+1}\|^2-\|\mathbf{u}_h^n\|^2+\|\tilde{\mathbf{u}}_h^{n+1}-\mathbf{u}_h^n\|^2+2\nu\tau
			\|\nabla\tilde{\mathbf{u}}_h^{n+1}\|^2+2\tau\left(\nabla p_h^n,\tilde{\mathbf{u}}_h^{n+1}\right)-2\tau\left(\mu_h^{n+1}\nabla\phi_h^{n+1},\tilde{\mathbf{u}}_h^{n+1}\right)=0.
		\end{equation}
		Next, we rewrite \eqref{eq_fully_discrete_CHNS_scheme_tilde_u_u1_p_p1} as
		\begin{equation}
			\label{eq_semi_discrete_CHNS_scheme_tilde_u_u1_p_p1_rewrite}
			\mathbf{u}_h^{n+1}+\tau\nabla p_h^{n+1}=\tilde{\mathbf{u}}_h^{n+1}+\tau\nabla p_h^n.
		\end{equation}
		Then we derive from \eqref{eq_semi_discrete_CHNS_scheme_tilde_u_u1_p_p1_rewrite} that
		\begin{equation}
			\label{eq_semi_discrete_CHNS_scheme_tilde_u_u1_p_p1_rewrite_d}
			\|\mathbf{u}_h^{n+1}\|^2-\|\tilde{\mathbf{u}}_h^{n+1}\|^2-2\tau\left(\nabla p_h^n,\tilde{\mathbf{u}}_h^{n+1}\right)+\tau^2\left(\|\nabla p_h^{n+1}\|^2-\|\nabla p_h^n\|^2\right)=0.
		\end{equation}
		Adding the \eqref{eq_semi_discrete_CHNS_scheme_tilde_u_u1_p_p1_rewrite_d} and \eqref{eq_semi_discrete_CHNS_scheme_tilde_u_inner_product_2delta_t}, we can obtain
		\begin{equation}
			\label{eq_combing_two_equations}
			\begin{aligned}	
				\|\mathbf{u}_h^{n+1}\|^2&-\|\mathbf{u}_h^n\|^2+\|\tilde{\mathbf{u}}_h^{n+1}-\mathbf{u}_h^n\|^2+2\tau
				\|\nabla\tilde{\mathbf{u}}_h^{n+1}\|^2+\tau^2\left(\|\nabla p_h^{n+1}\|^2-\|\nabla p_h^n\|^2\right)\\
				&=2\tau\left(\mu_h^{n+1}\nabla\phi_h^{n+1},\tilde{\mathbf{u}}_h^{n+1}\right).
			\end{aligned}
		\end{equation}
		Combining the \eqref{eq_semi_discrete_CHNS_scheme_phi_inner_2tau_mu},  \eqref{eq_inner_product_eq_semi_discrete_CHNS_scheme_mu} with \eqref{eq_combing_two_equations}, we get
		\begin{equation}
			\label{eq_combining_three_terms}
			\begin{aligned}
				\lambda&\left(\|\nabla\phi_h^{n+1}\|^2-\|\nabla\phi_h^{n}\|^2+\|\nabla\phi_h^{n+1}-\nabla\phi_h^n\|^2\right)\\
				&+\frac{2\lambda\tau}{\epsilon^2}
				\left(\frac{1}{4}\delta_\tau\|\left(\phi_h^{n+1}\right)^2-1\|^2+\frac{\tau}{4}
				\left(\|\delta_\tau(\phi_h^{n+1})^2\|^2\right.\right.\\
				&\left.\left.\qquad+2\|\phi_h^{n+1}\delta_\tau\phi_h^{n+1}\|^2
				+2\|\delta_\tau\phi_h^{n+1}\|^2\right)\right)\\
				&+\|\mathbf{u}_h^{n+1}\|^2-\|\mathbf{u}_h^n\|^2+\|\tilde{\mathbf{u}}_h^{n+1}-\mathbf{u}_h^n\|^2+\tau^2\left(\|\nabla p_h^{n+1}\|^2-\|\nabla p_h^n\|^2\right)\\
				=&-2M\tau\|\nabla\mu_h^{n+1}\|^2-2\nu\tau
				\|\nabla\tilde{\mathbf{u}}_h^{n+1}\|^2
				+2\tau\left(\mu_h^{n+1}\nabla\phi_h^{n+1},\tilde{\mathbf{u}}_h^{n+1}-\mathbf{u}_h^n\right).
				\end{aligned}
			\end{equation}
		According to \eqref{lemma_unique_solvability}, we obtain $\|\phi_h^{n+1}\|_{H^1}\leq C$.
	Recalling inverse inequality  and
choosing $\tau\leq Ch$, we have
\begin{equation}\label{eq_boundness_mu_phi_u}
\begin{aligned}
&2\tau\bigg|\left(\mu_h^{n+1}\nabla\phi_h^{n+1},\tilde{\mathbf{u}}_h^{n+1}-\mathbf{u}_h^n\right)\bigg|\\
\leq&~C\tau\|\nabla\mu_h^{n+1}\|\|\phi_h^{n+1}\|_{L^4}\|\tilde{\mathbf{u}}_h^{n+1}-\mathbf{u}_h^n\|_{L^4}\\
\leq&~\frac{C_2\tau}{h}\|\phi_h^{n+1}\|_{L^4}^2\|\tilde{\mathbf{u}}_h^{n+1}-\mathbf{u}_h^n\|^2+\tau\|\nabla\mu_h^{n+1}\|^2\\
\leq &~\frac{1}{2}\|\tilde{\mathbf{u}}_h^{n+1}-\mathbf{u}_h^n\|^2+M\tau\|\nabla\mu_h^{n+1}\|^2,
\end{aligned}
\end{equation}
	where $C$ and $C_2$ are constant that does not depend on $\tau$ and $h$. Combining \eqref{eq_boundness_mu_phi_u} with \eqref{eq_combining_three_terms}, and
	summing up form $n=0$ to $m$,
	we have
	\begin{equation}\label{eq_combining_summing_equation}
		\tilde{E}_h^{m+1}-\tilde{E}_h^0\leq -\tau\sum_{n=0}^{m}\left(2\|\nabla\tilde{\mathbf{u}}_h^{n+1}\|^2+\|\nabla\mu_h^{n+1}\|^2\right)-\sum_{n=0}^{m}\frac{1}{2}\|\tilde{\mathbf{u}}_h^{n+1}-\mathbf{u}_h^n\|^2.
	\end{equation}
	Thus, we can always obtain a positive number $C$ that does not mesh size $h$ and time step $\tau$ such that $\|\phi_h^{n+1}\|_{L^4}\leq \|\phi_h^{n+1}\|_{H^1}\leq C$ is bounded, and derive the desired result \eqref{eq_semi_discrete_scheme_energy_law}. 
	\end{proof}
	\begin{Remark}\label{remark_existence_solution}
		The existence of the solution for scheme \eqref{eq_fully_discrete_CHNS_scheme_phi}-\eqref{eq_fully_discrete_CHNS_scheme_tilde_incompressible_condition} 
		can be proved using the standard argument of the Leray-Schauder fixed point theorem (see \cite{2015_ShenJie_Decoupled_energy_stable_schemes_for_phase_field_models_of_two_phase_incompressible_flows,2008_Kay_David_and_Welford_Richard_Finite_element_approximation_of_a_Cahn_Hilliard_Navier_Stokes_system} for details of similar problems).
	\end{Remark}
	
	Then, we can derive the following lemma regarding the boundedness of the numerical solutions obtained from the scheme \eqref{eq_fully_discrete_CHNS_scheme_phi}-\eqref{eq_fully_discrete_CHNS_scheme_tilde_incompressible_condition}.
	\begin{Lemma}
		\label{Lemma_0301}
		Let $\left(\phi_h^{n+1},\mu_h^{n+1},\tilde{\mathbf{u}}_h^{n+1},\mathbf{u}_h^{n+1},p_h^{n+1}\right)\in \mathbf{\mathcal{X}}_h^r $ be the unique solution of  \eqref{eq_fully_discrete_CHNS_scheme_phi}-\eqref{eq_fully_discrete_CHNS_scheme_tilde_incompressible_condition}. Suppose 
		that $E(\phi_h^0,\mathbf{u}_h^0)<C_0$, 
		for any $h,\tau>0$ and the time step
		$\tau$ and mesh size $h$ satisfy the conditions of Theorem \ref{theorem_unconditionally_energy_stable}, the following boundednesses hold
		\begin{flalign}
			\label{eq_boundedness_estimates_phi}
			\max_{0\leq n\leq N-1}\left(\|\mathbf{u}_h^{n+1}\|^2+\|\nabla\phi_h^{n+1}\|^2
			+\frac{\tau}{2\epsilon^2}\|\left(\phi_h^{n+1}\right)^2-1\|^2+\tau^2\|\nabla p_h^{n+1}\|^2\right)&\leq C_0,\\
			\tau\sum_{n=0}^{N-1}\left(\|\nabla\mu_h^{n+1}\|^2+\|\nabla\tilde{\mathbf{u}}_h^{n+1}\|^2\right)&\leq C_0,\\
			\sum_{n=0}^{N-1}\left(\|\nabla\phi_h^{n+1}-\nabla\phi_h^n\|^2
			+\|\tilde{\mathbf{u}}_h^{n+1}-\mathbf{u}_h^n\|^2\right)&\leq C_0,\\
			\frac{\tau^2}{\epsilon^2}\sum_{n=0}^{N-1}
			\left[\frac{1}{2}
			\|\delta_\tau(\phi_h^{n+1})^2\|^2+\|\phi_h^{n+1}\delta_\tau\phi_h^{n+1}\|^2
			+\|\delta_\tau\phi_h^{n+1}\|^2\right]&\leq C_0,
		\end{flalign}
		where $C_0$ is a positive constant dependent on the initial values of $\phi^0$ and $\mathbf{u}^0$.
	\end{Lemma}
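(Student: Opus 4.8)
The plan is to obtain all four bounds as immediate corollaries of the summed energy inequality already derived in the proof of Theorem~\ref{theorem_unconditionally_energy_stable}; the only adjustment is that here I must \emph{retain} the three squared-difference dissipation terms that were discarded there. First I would return to the one-step balance produced by combining \eqref{eq_combining_three_terms} with the coupling-term estimate \eqref{eq_boundness_mu_phi_u}, before the nonnegative increments $\|\nabla\phi_h^{n+1}-\nabla\phi_h^n\|^2$ and the $\delta_\tau$ terms are thrown away. With $\lambda=M=\nu=1$, this reads
\[
\big(\tilde{E}_h^{n+1}-\tilde{E}_h^n\big)
+\|\nabla\phi_h^{n+1}-\nabla\phi_h^n\|^2
+\frac{\tau^2}{\epsilon^2}\Big(\tfrac12\|\delta_\tau(\phi_h^{n+1})^2\|^2+\|\phi_h^{n+1}\delta_\tau\phi_h^{n+1}\|^2+\|\delta_\tau\phi_h^{n+1}\|^2\Big)
+\tfrac12\|\tilde{\mathbf{u}}_h^{n+1}-\mathbf{u}_h^n\|^2
\le -\tau\|\nabla\mu_h^{n+1}\|^2-2\tau\|\nabla\tilde{\mathbf{u}}_h^{n+1}\|^2,
\]
in which every quantity appearing in the four claimed estimates sits on the left as a nonnegative contribution while the right-hand side is nonpositive.

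Next I would sum this inequality over $n=0,\dots,m$ for an arbitrary $0\le m\le N-1$. The terms constituting $\tilde{E}_h$ telescope to $\tilde{E}_h^{m+1}-\tilde{E}_h^0$, the remaining left-hand sums stay intact, and transferring the dissipation to the left gives
\[
\tilde{E}_h^{m+1}
+\sum_{n=0}^{m}\Big(\|\nabla\phi_h^{n+1}-\nabla\phi_h^n\|^2+\tfrac12\|\tilde{\mathbf{u}}_h^{n+1}-\mathbf{u}_h^n\|^2\Big)
+\frac{\tau^2}{\epsilon^2}\sum_{n=0}^{m}\Big(\tfrac12\|\delta_\tau(\phi_h^{n+1})^2\|^2+\|\phi_h^{n+1}\delta_\tau\phi_h^{n+1}\|^2+\|\delta_\tau\phi_h^{n+1}\|^2\Big)
+\tau\sum_{n=0}^{m}\big(\|\nabla\mu_h^{n+1}\|^2+2\|\nabla\tilde{\mathbf{u}}_h^{n+1}\|^2\big)
\le \tilde{E}_h^0 .
\]
Since $\tilde{E}_h^{m+1}\ge 0$ and each summed term is nonnegative, every one of them is individually bounded by $\tilde{E}_h^0$. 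Taking the maximum over $m$ in the $\tilde{E}_h^{m+1}$ term yields \eqref{eq_boundedness_estimates_phi}, and the three remaining inequalities are read off directly, absorbing the harmless factors $\tfrac12$ and $2$ into the constant.

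The only point needing attention is the $\tau,h$-independence of $\tilde{E}_h^0$. Because $F(\phi)=\frac{1}{4\epsilon^2}(\phi^2-1)^2$, one has $\frac{1}{2\epsilon^2}\|(\phi_h^0)^2-1\|^2=2\big(F(\phi_h^0),1\big)$, so that $\tilde{E}_h^0\le 2E(\phi_h^0,\mathbf{u}_h^0)+\tau^2\|\nabla p_h^0\|^2$; with the standard initialization $p_h^0=0$ (or any bounded initial pressure) together with the hypotheses $E(\phi_h^0,\mathbf{u}_h^0)<C_0$ and \eqref{eq_initial_value_boundedness}, this gives $\tilde{E}_h^0\le C_0$ uniformly in the discretization parameters. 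I do not expect a genuine obstacle here: the statement is a direct a priori consequence of the discrete energy law, and the only care required is to keep the squared-difference dissipation terms when passing from the one-step balance to the summed estimate, together with the elementary verification that $\tilde{E}_h^0$ is bounded independently of $\tau$ and $h$.
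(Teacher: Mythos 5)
Your proposal is correct and takes essentially the same route the paper intends: the paper's own ``proof'' of this lemma is a one-line remark that it follows from Theorem \ref{theorem_unconditionally_energy_stable}, and your argument supplies exactly the omitted details --- retaining the nonnegative difference and $\delta_\tau$ dissipation terms in the one-step balance obtained from \eqref{eq_combining_three_terms} and \eqref{eq_boundness_mu_phi_u} rather than discarding them, summing, and bounding every term by $\tilde{E}_h^0$. Your final observation that $\tilde{E}_h^0$ is bounded uniformly in $\tau$ and $h$ via $\frac{1}{2\epsilon^2}\|(\phi_h^0)^2-1\|^2=2\left(F(\phi_h^0),1\right)$, the hypothesis $E(\phi_h^0,\mathbf{u}_h^0)<C_0$, and \eqref{eq_initial_value_boundedness} is a point the paper glosses over, and it is handled correctly.
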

The proof of this lemma can be derived from the result of Theorem \ref{theorem_unconditionally_energy_stable}, the detailed steps of which we omit.
\begin{Lemma}\label{lemma_boundness_phi_mu}
	For all $m\geq 0$, there exists positive constant $C$ such that
	\begin{equation}\label{eq_lemma_first_result}
		\|\phi_h^{m+1}\|_{H^1}^2+\tau\sum_{n=0}^{m}\|\mu_h^{n+1}\|^2\leq C\tau,
	\end{equation}
and 
\begin{equation}\label{eq_lemma_second_result}
	\|\Delta_h\phi_h^{m+1}\|^2+\tau\sum_{n=0}^{m}\|\Delta_h\mu_h^{n+1}\|^2\leq C\tau.
\end{equation}
\end{Lemma}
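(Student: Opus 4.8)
The plan is to read the two displays \eqref{eq_lemma_first_result} and \eqref{eq_lemma_second_result} as higher a priori bounds on the discrete Cahn--Hilliard pair $(\phi_h,\mu_h)$, bootstrapped from the energy estimates already in Theorem \ref{theorem_unconditionally_energy_stable} and Lemma \ref{Lemma_0301}, with every constant independent of $h$ and $\tau$. The $\phi_h$ part of \eqref{eq_lemma_first_result} is immediate: Theorem \ref{theorem_unconditionally_energy_stable} and Lemma \ref{Lemma_0301} supply $\|\nabla\phi_h^{m+1}\|\le C$ and $\|(\phi_h^{m+1})^2-1\|\le C$, the latter giving $\|\phi_h^{m+1}\|_{L^4}\le C$, hence $\|\phi_h^{m+1}\|\le C$ and $\|\phi_h^{m+1}\|_{H^1}\le C$. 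For the $\mu_h$ part I would first bound the mean by testing \eqref{eq_fully_discrete_CHNS_scheme_mu} with $\varphi_h\equiv1$, namely $(\mu_h^{n+1},1)=\epsilon^{-2}((\phi_h^{n+1})^3-\phi_h^n,1)$, controlled through the embedding \eqref{eq_basic_inequalities_001}; Poincar\'e then yields $\|\mu_h^{n+1}\|^2\le C\|\nabla\mu_h^{n+1}\|^2+C$, and summing against $\tau\sum_n\|\nabla\mu_h^{n+1}\|^2\le C$ from Lemma \ref{Lemma_0301} closes \eqref{eq_lemma_first_result}.

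For \eqref{eq_lemma_second_result} I would pass to the discrete strong forms via the operator \eqref{eq_discrete_Laplacian_operator}. Equation \eqref{eq_fully_discrete_CHNS_scheme_mu} becomes $\mu_h^{n+1}=-\Delta_h\phi_h^{n+1}+\epsilon^{-2}I_h((\phi_h^{n+1})^3-\phi_h^n)$, so that $\|\Delta_h\phi_h^{m+1}\|\le\|\mu_h^{m+1}\|+C$ after estimating the cubic term by $\|\phi_h^{m+1}\|_{L^6}^3+C$; equation \eqref{eq_fully_discrete_CHNS_scheme_phi} becomes $\Delta_h\mu_h^{n+1}=\delta_\tau\phi_h^{n+1}+I_h(\mathbf{u}_h^n\cdot\nabla\phi_h^{n+1})$, and since $\|\mathbf{u}_h^n\|_{L^\infty}\le C$ by \eqref{eq_Linfty_u_boundedness} the convective part lies in $L^2$, giving $\|\Delta_h\mu_h^{n+1}\|\le C\|\delta_\tau\phi_h^{n+1}\|+C$. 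Hence \eqref{eq_lemma_second_result} reduces to the two bounds $\max_m\|\mu_h^{m+1}\|\le C$ and $\tau\sum_n\|\delta_\tau\phi_h^{n+1}\|^2\le C$.

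These last two are the core, and I would derive them simultaneously. Taking $w_h=\delta_\tau\phi_h^{n+1}$ in \eqref{eq_fully_discrete_CHNS_scheme_phi} gives $\|\delta_\tau\phi_h^{n+1}\|^2+(\mathbf{u}_h^n\cdot\nabla\phi_h^{n+1},\delta_\tau\phi_h^{n+1})=-(\nabla\mu_h^{n+1},\nabla\delta_\tau\phi_h^{n+1})$. The pivotal step is to evaluate the right-hand side by using \eqref{eq_fully_discrete_CHNS_scheme_mu} at the consecutive levels $n+1$ and $n$, each tested with $\mu_h^{n+1}$, and subtracting: this converts $\tau^{-1}(\nabla\mu_h^{n+1},\nabla(\phi_h^{n+1}-\phi_h^n))$ into $\tfrac{1}{2\tau}(\|\mu_h^{n+1}\|^2-\|\mu_h^n\|^2+\|\mu_h^{n+1}-\mu_h^n\|^2)$ minus the cubic remainder $\epsilon^{-2}\tau^{-1}((\phi_h^{n+1})^3-(\phi_h^n)^3-(\phi_h^n-\phi_h^{n-1}),\mu_h^{n+1})$. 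Multiplying by $\tau$ and summing telescopes the quadratic group into $\tfrac12\|\mu_h^{m+1}\|^2$, leaving $\tau\sum_n\|\delta_\tau\phi_h^{n+1}\|^2+\tfrac12\|\mu_h^{m+1}\|^2$ on the left, with the starting level $n=0$ treated directly from the data $\phi_h^0$.

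The main obstacle is controlling the two remainders without reintroducing negative powers of $\tau$. The convective coupling I would handle by Young's inequality with $\|\mathbf{u}_h^n\|_{L^\infty}\le C$ and $\|\nabla\phi_h^{n+1}\|\le C$, absorbing a fraction of $\tau\sum_n\|\delta_\tau\phi_h^{n+1}\|^2$ and leaving a term bounded by $CT$. For the cubic remainder the decisive identity is $(\phi_h^{n+1})^3-(\phi_h^n)^3=(\phi_h^{n+1}-\phi_h^n)((\phi_h^{n+1})^2+\phi_h^{n+1}\phi_h^n+(\phi_h^n)^2)$; a H\"older split in $L^2\times L^3\times L^6$, the uniform $H^1$ bound on $\phi_h$, and $\|\phi_h^{n+1}-\phi_h^n\|=\tau\|\delta_\tau\phi_h^{n+1}\|$ bring it to the form $C\tau\sum_n\|\delta_\tau\phi_h^{n+1}\|\,\|\mu_h^{n+1}\|_{H^1}$, which after Young is absorbed on the left together with $C\tau\sum_n\|\mu_h^{n+1}\|_{H^1}^2\le C$, the latter bounded by \eqref{eq_lemma_first_result} and Lemma \ref{Lemma_0301}. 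A discrete Gr\"onwall application (Lemma \ref{lemma_discrete_Gronwall_inequation}) then yields $\max_m\|\mu_h^{m+1}\|\le C$ and $\tau\sum_n\|\delta_\tau\phi_h^{n+1}\|^2\le C$, and feeding these into the strong forms above establishes \eqref{eq_lemma_second_result}. The most delicate points are this cubic absorption and the consistent treatment of the initial level, keeping all constants independent of $h$ and $\tau$.
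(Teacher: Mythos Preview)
Your argument is correct but follows a different route from the paper for both estimates. For \eqref{eq_lemma_first_result} you pull the $H^1$ bound on $\phi_h$ directly from Lemma~\ref{Lemma_0301} and control $\|\mu_h^{n+1}\|$ via Poincar\'e after bounding the mean with $\varphi_h=1$; the paper instead tests \eqref{eq_fully_discrete_CHNS_scheme_phi} with $w_h=2\tau\phi_h^{n+1}$ to close a Gr\"onwall loop on $\|\phi_h^{m+1}\|^2$, and then tests \eqref{eq_fully_discrete_CHNS_scheme_mu} with $\varphi_h=\tau\mu_h^{n+1}$. Your version is arguably cleaner here. For \eqref{eq_lemma_second_result} the contrast is sharper. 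The paper tests \eqref{eq_fully_discrete_CHNS_scheme_phi} with $w_h=2\tau\Delta_h\mu_h^{n+1}$ and \eqref{eq_fully_discrete_CHNS_scheme_mu} with $\varphi_h=2\tau\Delta_h\delta_\tau\phi_h^{n+1}$, so that the combination telescopes $\|\Delta_h\phi_h^{n+1}\|^2-\|\Delta_h\phi_h^n\|^2$ directly; the convective piece is handled by the discrete embedding $\|\nabla\phi_h^{n+1}\|_{L^4}\le C(\|\nabla\phi_h^{n+1}\|+\|\Delta_h\phi_h^{n+1}\|)$ together with $\|\mathbf{u}_h^n\|_{L^4}$, and Gr\"onwall runs with the summable weight $\|\nabla\mathbf{u}_h^n\|^2$ from Lemma~\ref{Lemma_0301}. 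You instead reduce \eqref{eq_lemma_second_result} to the two auxiliary bounds $\max_m\|\mu_h^{m+1}\|\le C$ and $\tau\sum_n\|\delta_\tau\phi_h^{n+1}\|^2\le C$, obtained by the test $w_h=\delta_\tau\phi_h^{n+1}$ and the level-differencing trick on \eqref{eq_fully_discrete_CHNS_scheme_mu}. The paper's route is shorter and avoids invoking \eqref{eq_Linfty_u_boundedness}; your route yields the useful intermediate quantities $\max_m\|\mu_h^{m+1}\|$ and $\tau\sum_n\|\delta_\tau\phi_h^{n+1}\|^2$, at the cost of the extra bookkeeping you flag at $n=0$ (which does work, since $\|\Delta_h\phi_h^0\|\le C$ from $\phi_h^0=R_h\phi^0$).
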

\begin{proof}
	Letting $w_h=2\tau\phi_h^{n+1}$ in \eqref{eq_fully_discrete_CHNS_scheme_phi}, we obtain
	\begin{equation}
		\label{eq_TIP_2tau_phi}
		\begin{aligned}
			&\|\phi_h^{n+1}\|^2-\|\phi_h^n\|^2+\|\phi_h^{n+1}-\phi_h^n\|^2=-2\tau\left(\mathbf{u}_h^n\cdot\nabla\phi_h^{n+1},\phi_h^{n+1}\right)
			-2\tau\left(\nabla\mu_h^{n+1},\nabla\phi_h^{n+1}\right).
		\end{aligned}
	\end{equation}
Using the Young inequality and Cauchy-Sachwarz inequality, we can obtain
\begin{equation}
	\begin{aligned}
		\big|-2\tau\left(\mathbf{u}_h^n\cdot\nabla\phi_h^{n+1},\phi_h^{n+1}\right)\big|
		\leq &~2\tau\|\mathbf{u}_h^n\|_{L^4}\|\nabla\phi_h^{n+1}\|\|\phi_h^{n+1}\|_{L^4}\\
		\leq&~ \tau\|\nabla\phi_h^{n+1}\|^2+C\tau\|\nabla\mathbf{u}_h^n\|^2\left(\|\phi_h^{n+1}\|^2+\|\nabla\phi_h^{n+1}\|^2\right)
	\end{aligned}
\end{equation}
and
\begin{equation}
	\big|-2\tau\left(\nabla\mu_h^{n+1},\nabla\phi_h^{n+1}\right)\big|\leq \tau\left(\|\nabla\mu_h^{n+1}\|^2+\|\nabla\phi_h^{n+1}\|^2\right).
\end{equation}
Combining the above two inequalities with \eqref{eq_TIP_2tau_phi} and summing up for $n$ from $0$ to $m$, we obtain
\begin{equation}
	\label{eq_combining_the_two_inequalities_summing_up}
	\|\phi_h^{m+1}\|^2\leq \|\phi_h^0\|^2+\tau\sum_{n=0}^{m}\left(\|\nabla\mu_h^{n+1}\|^2+2\|\nabla\phi_h^{n+1}\|^2+C\|\nabla\mathbf{u}_h^n\|^2\left(\|\phi_h^{n+1}\|^2+\|\nabla\phi_h^{n+1}\|^2\right)\right).
\end{equation}
Using the lemma \ref{lemma_discrete_Gronwall_inequation} and \ref{Lemma_0301}, we have
\begin{equation}
	\label{eq_phi_h_L2_norm_boundness}
	\|\phi_h^{m+1}\|^2\leq \left(\|\phi_h^{0}\|^2+c_1\left(1+4\tau+c_2\right)\right)\exp(c)\leq C\tau.
\end{equation}
Thererfore, we can derive 
\begin{equation}
	\label{eq_phi_h_H1_norm_boundness}
	\|\phi_h^{n+1}\|_{H^1}^2\leq \|\phi_h^{n+1}\|^2+\|\nabla\phi_h^{n+1}\|^2\leq C\tau.
\end{equation}
Testing $\varphi_h=\tau\mu_h^{n+1}$ in \eqref{eq_fully_discrete_CHNS_scheme_mu}, we obtain
\begin{equation}
	\label{eq_testing_tau_mu_n1}
	\begin{aligned}
		\tau\|\mu_h^{n+1}\|^2=&~\tau\left(\nabla\phi_h^{n+1},\nabla\mu_h^{n+1}\right)+\tau\left((\phi_h^{n+1})^3-\phi_h^n,\mu_h^{n+1}\right)\\
		\leq&~\tau\left(\|\nabla\phi_h^{n+1}\|^2+\|\nabla\mu_h^{n+1}\|^2+\|\mu_h^{n+1}\|^2+\|(\phi_h^{n+1})^3-\phi_h^n\|^2\right).
	\end{aligned}
\end{equation}
Now using the (2.37) in \cite{2015_Diegel_Analysis_of_a_mixed_finite_element_method_for_a_Cahn_Hilliard_Darcy_Stokes_system}, 
we have
\begin{equation}\label{eq_phi3_error}
	\|(\phi_h^{n+1})^3-\phi_h^n\|^2\leq 2\left(\|\phi_h^{n+1}\|_{L^6}^6+\|\phi_h^n\|^2\right)\leq C\left(\|\phi_h^{n+1}\|_{H^1}^2+\|\phi_h^n\|^2\right)
	\leq C.
\end{equation}
Thus, combining the above inequalities with \eqref{eq_testing_tau_mu_n1} and summing up for $n$ from $0$ to $m$, we obtain
\begin{equation}\label{eq_combin_sum_mu}
	\tau\sum_{n=0}^{m}\|\mu_h^{n+1}\|^2\leq C\tau.
\end{equation}
From \eqref{eq_phi_h_H1_norm_boundness} and \eqref{eq_combin_sum_mu}, we obtain the result of \eqref{eq_lemma_first_result}.
We nextly prove the second term of this lemma.
Letting $w_h=2\tau\Delta_h\mu_h^{n+1}$  in \eqref{eq_fully_discrete_CHNS_scheme_phi}, and $\varphi_h=\Delta_h\delta_{\tau}\phi_h^{n+1}$
in \eqref{eq_fully_discrete_CHNS_scheme_mu}, and combining the results of two terms, 
we obtain
\begin{equation}
	\label{eq_Delta_phi_mu}
	\begin{aligned}
		&~\|\Delta_h\phi_h^{n+1}\|^2-\|\Delta_h\phi_h^n\|^2+\|\Delta_h\left(\phi_h^{n+1}-\phi_h^n\right)\|^2+2\tau\|\Delta_h\mu_h^{n+1}\|^2\\
		=&~2\tau\left(\mathbf{u}_h^n\cdot\nabla\phi_h^{n+1},\Delta_h\mu_h^{n+1}\right)
			+2\tau\left((\phi_h^{n+1})^3-\phi_h^n,\Delta_h\delta_{\tau}\phi_h^{n+1}\right).
	\end{aligned}
\end{equation}
Using the Lemma 2.14 in \cite{2015_Diegel_Analysis_of_a_mixed_finite_element_method_for_a_Cahn_Hilliard_Darcy_Stokes_system},
we can estimate the right-hand side of \eqref{eq_Delta_phi_mu} as follows:
\begin{equation}
	\label{eq_estimate_right_hand_terms_first}
	\begin{aligned}
		\big|2\tau\left(\mathbf{u}_h^n\cdot\nabla\phi_h^{n+1},\Delta_h\mu_h^{n+1}\right)\big|
		\leq &~2\tau\|\mathbf{u}_h^n\|_{L^4}^2\|\nabla\phi_h^{n+1}\|_{L^4}\|\Delta_h\mu_h^{n+1}\|\\
		\leq &~C\tau\|\nabla\mathbf{u}_h^n\|\left(\|\nabla\phi_h^{n+1}\|+\|\Delta_h\phi_h^{n+1}\|\right)\|\Delta_h\mu_h^{n+1}\|\\
		\leq &~C\tau\|\nabla\mathbf{u}_h^n\|^2\left(\|\nabla\phi_h^{n+1}\|^2+\|\Delta_h\phi_h^{n+1}\|^2\right)+\tau\|\Delta_h\mu_h^{n+1}\|^2.
	\end{aligned}
\end{equation}
and
\begin{equation}
	\label{eq_estimate_right_hand_terms_second}
	\begin{aligned}
		\big|2\tau\left((\phi_h^{n+1})^3-\phi_h^n,\Delta_h\delta_{\tau}\phi_h^{n+1}\right)\big|
		\leq&~ 2\tau\|(\phi_h^{n+1})^3-\phi_h^n\|\|\Delta_h\delta_{\tau}\phi_h^{n+1}\|\\
		\leq&~c\tau^2\|\Delta_h\delta_{\tau}\phi_h^{n+1}\|^2+C.
	\end{aligned}
\end{equation}
Combining the above two inequalities \eqref{eq_estimate_right_hand_terms_first} and
\eqref{eq_estimate_right_hand_terms_second} with \eqref{eq_Delta_phi_mu} and summing up for $n$ from $0$ to $m$, we obtain
\begin{equation}
		\|\Delta_h\phi_h^{m+1}\|^2+\tau\sum_{n=0}^{m}\|\Delta_h\mu_h^{n+1}\|^2\leq \|\Delta_h\phi_h^0\|^2
			+\tau\sum_{n=0}^{m}\|\nabla\mathbf{u}_h^n\|^2\left(\|\nabla\phi_h^{n+1}\|^2+\|\Delta_h\phi_h^{n+1}\|^2\right).
\end{equation}
Using the lemma \ref{lemma_discrete_Gronwall_inequation} and \ref{Lemma_0301}, we
obtain the result of this lemma.
\end{proof}
\begin{Remark}
	 We note that the bound $\tau\leq Ch/\left(2\|\phi_h^{n+1}\|_{L^4}^2\right)$ in Lemma \ref{Lemma_0301} and 
	 \ref{lemma_boundness_phi_mu} is attainable, since from \eqref{eq_basic_inequalities_001} and Lemma \ref{lemma_boundness_phi_mu} and \eqref{eq_H1_phi_mu_boundedness}, we obtain
	 $\|\phi_h^{n+1}\|_{L^4}\leq C\|\phi_h^{n+1}\|_{H^1}\leq C$.  
	More similar details about energy stabilization under this discrete scheme can be found in reference \cite{2008_Kay_David_and_Welford_Richard_Finite_element_approximation_of_a_Cahn_Hilliard_Navier_Stokes_system}.
\end{Remark}

\section{Error analysis}\label{section_error_analysis}

In this section, we establish the error analysis for the scheme \eqref{eq_fully_discrete_CHNS_scheme_phi}-\eqref{eq_fully_discrete_CHNS_scheme_tilde_incompressible_condition}. 
We denote
$$
	\begin{aligned}
		&e_\phi^{n+1}=\phi^{n+1}-\phi_h^{n+1}=\phi^{n+1}-R_h\phi^{n+1}+R_h\phi^{n+1}-\phi_h^{n+1}
			=\Phi_{\phi h}^{n+1}+\Theta_{\phi h}^{n+1},\\
	   &e_\mu^{n+1}=\mu^{n+1}-\mu_h^{n+1}=\mu^{n+1}-R_h\mu^{n+1}+R_h\mu^{n+1}-\mu_h^{n+1}	=\Phi_{\mu h}^{n+1}+\Theta_{\mu h}^{n+1},
	\end{aligned}
$$
and
$$
	\begin{aligned}
		&e_{\tilde{\mathbf{u}}}^{n+1}=\mathbf{u}^{n+1}-\tilde{\mathbf{u}}_h^{n+1}=\mathbf{u}^{n+1}-\boldsymbol{P}_h\mathbf{u}^{n+1}
			+\boldsymbol{P}_h\mathbf{u}^{n+1}-\tilde{\mathbf{u}}_h^{n+1}=\Phi_{\mathbf{u} h}^{n+1}+\Theta_{\tilde{\mathbf{u}} h}^{n+1},\\
		&e_\mathbf{u}^{n+1}=\mathbf{u}^{n+1}-\mathbf{u}_h^{n+1}=\mathbf{u}^{n+1}-\boldsymbol{P}_h\mathbf{u}^{n+1}
			+\boldsymbol{P}_h\mathbf{u}^{n+1}-\mathbf{u}_h^{n+1}=\Phi_{\mathbf{u} h}^{n+1}+\Theta_{\mathbf{u} h}^{n+1},\\
		&e_p^{n+1}=p^{n+1}-p_h^{n+1}=p^{n+1}-P_hp^{n+1}+P_hp^{n+1}-p_h^{n+1}=\Phi_{p h}^{n+1}+\Theta_{p h}^{n+1}.
	\end{aligned}
$$
For $\left(e_\phi^{n+1},e_\mu^{n+1},e_{\tilde{\mathbf{u}}}^{n+1},e_\mathbf{u}^{n+1},e_p^{n+1}\right)$ 
and the definition of Ritz projection \eqref{eq_Ritz_projection} and 
Stokes projection \eqref{eq_stokes_quasi_proojection_equation0001}-\eqref{eq_stokes_quasi_proojection_equation0002}, 
we subtract \eqref{eq_variational_forms_phi}-\eqref{eq_variational_forms_incompressible_condition} 
from \eqref{eq_fully_discrete_CHNS_scheme_phi}-\eqref{eq_fully_discrete_CHNS_scheme_tilde_incompressible_condition} to
obtain the following error equations,
\begin{flalign}
	\label{eq_error_equations_phi}
	\left(\delta_\tau e_\phi^{n+1},w_h\right)+M\left(\nabla e_\mu^{n+1},\nabla w_h\right)+b\left(\phi^{n+1},\mathbf{u}^{n+1},w_h\right)-b\left(\phi_h^{n+1},\mathbf{u}_h^n,w_h\right)
		-\left(R_1^{n+1},w_h\right)&=0,\\
	\label{eq_error_equations_mu}
	\left(e_\mu^{n+1},\varphi_h\right)-\lambda\left(\nabla e_\phi^{n+1},\nabla\varphi_h\right)
	-\frac{\lambda}{\epsilon^2}\left((\phi_h^{n+1})^3-(\phi^{n+1})^3-e_\phi^{n},\varphi_h\right)&=0,\\
	\label{eq_error_equations_ns}
	\left( \frac{e_{\tilde{\mathbf{u}}}^{n+1}-e_\mathbf{u}^n}{\tau},\mathbf{v}_h\right)+\nu\left(\nabla e_{\tilde{\mathbf{u}}}^{n+1},\nabla\mathbf{v}_h\right)
	+\left(\nabla e_p^{n},\mathbf{v}_h\right)
	+\left(\boldsymbol{B}\left(\mathbf{u}^{n+1},\mathbf{u}^{n+1},\mathbf{v}_h\right)-\boldsymbol{B}\left(\mathbf{u}_h^{n},\tilde{\mathbf{u}}_h^{n+1},\mathbf{v}_h\right)\right)\notag\\
	+\left(b\left(\phi_h^{n+1},\mathbf{v}_h,\mu_h^{n+1}\right)-b\left(\phi^{n+1},\mathbf{v}_h,\mu^{n+1}\right)\right)-
	\left(R_2^{n+1},\mathbf{v}_h\right)&=0,\\
	\label{eq_error_equations_u_p_cor}
	\frac{e_\mathbf{u}^{n+1}-e_{\tilde{\mathbf{u}}}^{n+1}}{\tau}+\nabla\left(e_p^{n+1}-e_p^{n}\right)-R_3^{n+1}&=0,\\
	\label{eq_error_equations_incompressible}
	\left(\nabla\cdot e_\mathbf{u}^{n+1},q_h\right)&=0,
\end{flalign}
where the truncation errors $R_1^{n+1}$, $R_2^{n+1}$  and $R_3^{n+1}$ is denoted by
\begin{equation}
	\begin{aligned}
		R_1^{n+1}=\delta_\tau R_h\phi^{n+1}-\partial_t\phi^{n+1},
		R_2^{n+1}=\delta_\tau\boldsymbol{P}_h\mathbf{u}^{n+1}-\partial_t\mathbf{u}^{n+1},
		R_3^{n+1}=\nabla\left(P_h p^{n+1}-P_h p^n\right),
	\end{aligned}
\end{equation}
and the  $\delta_\tau$ in time scheme is defined by \eqref{eq_d_t}.
Given the above error equations, we aim to derive the optimal $L^2$
error estimate for the CHNS system, which is outlined in the following main result.
\begin{Theorem}\label{theorem_error_estimates_u_phi_mu_r}
	Suppose that the model \eqref{eq_chns_equations}-\eqref{eq_boundary_initial_conditions_equations} has a
	unique solution $\left(\phi,\mu,\mathbf{u},p\right)$ 
	satisfying the regularities \eqref{eq_varibles_satisfied_regularities}. Then $\left(\phi_h^{n+1},\mu_h^{n+1},\mathbf{u}_h^{n+1},p_h^{n+1}\right)\in \mathbf{\mathcal{X}}_h^r $ is 
	unique solution in the fully discrete scheme 
	\eqref{eq_fully_discrete_CHNS_scheme_phi}-\eqref{eq_fully_discrete_CHNS_scheme_tilde_incompressible_condition}.
	for a positive constant $\tau_0$ such that $0<\tau\leq \tau_0$, the numerical solutions satisfy the error estimates as follows:
	\begin{flalign}
		\max_{0\leq n\leq N-1}\|\phi^{n+1}-\phi_h^{n+1}\|+\left(\tau\sum_{n=0}^{N-1}\|\mu^{n+1}-\mu_h^{n+1}\|^2\right)^{\frac{1}{2}}\leq C(\tau+h^{r+1}),&\\
		\max_{0\leq n\leq N-1}\|\mathbf{u}^{n+1}-\mathbf{u}_h^{n+1}\|\leq C(\tau+h^{r+1}),&\\
		\tau\sum_{n=0}^{N-1}\|\nabla\left(\mathbf{u}^{n+1}-\mathbf{u}_h^{n+1}\right)\|^2\leq C(\tau^2+h^{2(r+1)}),&\\
		\tau\sum_{n=0}^{N-1}\|p^{n+1}-p_h^{n+1}\|^2\leq C\left(\tau^2+h^{2r}\right),
	\end{flalign}
	where $C$ is a positive constant independent of  $\tau$ and $h$.
\end{Theorem}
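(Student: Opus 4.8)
The plan is to work with the error splittings already introduced, writing each error as the sum of a projection error $\Phi$ and a finite element error $\Theta$. The projection parts are controlled optimally by the approximation estimates \eqref{eq_psi_Rhpsi_Ls_norm_inequation}--\eqref{eq_Dtau_psi_Rhpsi_Hneg1_norm_inequation} and Lemma \ref{lemma_stokes_projection001}, so the entire task reduces to bounding the discrete components $\Theta_{\phi h}^{n+1}$, $\Theta_{\mu h}^{n+1}$, $\Theta_{\tilde{\mathbf{u}} h}^{n+1}$, $\Theta_{\mathbf{u} h}^{n+1}$, and $\Theta_{p h}^{n+1}$. Because the nonlinear terms (the cubic $\phi^3$, the trilinear convection $\boldsymbol{B}$, and the phase coupling $b$) can only be controlled once the numerical solution enjoys $L^\infty$-type a priori bounds, I would run the whole argument as an induction on $n$: assuming the asserted bounds hold through level $n$, one derives the discrete $L^\infty$ and $W^{1,\infty}$ control of $\phi_h$ and $\mathbf{u}_h$ (via the inverse inequalities \eqref{eq_inverse_inequalities001}--\eqref{eq_inverse_inequalities002} together with Lemmas \ref{Lemma_0301} and \ref{lemma_boundness_phi_mu}) needed to close level $n+1$.

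For the Cahn--Hilliard subsystem I would first test \eqref{eq_error_equations_phi} with the discrete inverse Laplacian $(-\Delta_h)^{-1}\Theta_{\phi h}^{n+1}$ and \eqref{eq_error_equations_mu} with $\Theta_{\phi h}^{n+1}$, then add. This choice cancels the cross term $(\nabla e_\mu^{n+1},\nabla\cdot)$ and produces the telescoping quantity $\delta_\tau\|(-\Delta_h)^{-1/2}\Theta_{\phi h}^{n+1}\|^2$ together with the dissipative term $\lambda\|\nabla\Theta_{\phi h}^{n+1}\|^2$. The cubic difference $(\phi_h^{n+1})^3-(\phi^{n+1})^3$ is written as $\big((\phi_h^{n+1})^2+\phi_h^{n+1}\phi^{n+1}+(\phi^{n+1})^2\big)e_\phi^{n+1}$; its leading monotone part is nonnegative and is kept, while the remainder is absorbed using the a priori $L^\infty$ bound on $\phi_h^{n+1}$ from the induction hypothesis. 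The convection difference $b(\phi^{n+1},\mathbf{u}^{n+1},\cdot)-b(\phi_h^{n+1},\mathbf{u}_h^n,\cdot)$ is decomposed into $b(e_\phi^{n+1},\mathbf{u}^{n+1},\cdot)$, $b(\phi_h^{n+1},e_{\mathbf{u}}^n,\cdot)$, and a time-lag piece $b(\phi_h^{n+1},\mathbf{u}^{n+1}-\mathbf{u}^n,\cdot)$; each factor is estimated by \eqref{eq_trilinear_forms_bound}, with the time-lag piece contributing an $O(\tau)$ consistency term. Invoking \eqref{eq_operators_estimates_neg_3} to convert $\|\Theta_{\phi h}^{n+1}\|^2$ into a small multiple of $\|\nabla\Theta_{\phi h}^{n+1}\|^2$ plus the controlled negative norm then yields the optimal $L^2$ bound for $\phi$ and the $\ell^2(L^2)$ bound for $\mu$.

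For the Navier--Stokes subsystem I would test \eqref{eq_error_equations_ns} with $2\tau\,\Theta_{\tilde{\mathbf{u}} h}^{n+1}$ and, in parallel, square the projection identity \eqref{eq_error_equations_u_p_cor} exactly as in the energy-stability proof of Theorem \ref{theorem_unconditionally_energy_stable}, so that the pressure-increment terms telescope into $\tau^2\big(\|\nabla\Theta_{p h}^{n+1}\|^2-\|\nabla\Theta_{p h}^{n}\|^2\big)$ and the awkward $(\nabla e_p^n,\Theta_{\tilde{\mathbf{u}} h}^{n+1})$ contribution is eliminated. The skew-symmetry of $\boldsymbol{B}$ removes its diagonal term, and the convection difference $\boldsymbol{B}(\mathbf{u}^{n+1},\mathbf{u}^{n+1},\cdot)-\boldsymbol{B}(\mathbf{u}_h^n,\tilde{\mathbf{u}}_h^{n+1},\cdot)$ is split into $\boldsymbol{B}(e_{\mathbf{u}}^n,\mathbf{u}^{n+1},\cdot)$ and $\boldsymbol{B}(\mathbf{u}_h^n,e_{\tilde{\mathbf{u}}}^{n+1},\cdot)$ plus an $O(\tau)$ time-lag term, each bounded via \eqref{eq_basic_inequalities_003} and the a priori velocity bounds. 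The phase-force difference $b(\phi_h^{n+1},\cdot,\mu_h^{n+1})-b(\phi^{n+1},\cdot,\mu^{n+1})$ is treated by the same product-rule splitting into $\phi$- and $\mu$-error pieces, which couples this estimate back to the Cahn--Hilliard bounds just obtained. Once the velocity error in $\ell^\infty(L^2)\cap\ell^2(H^1)$ is in hand, the pressure estimate follows from the discrete inf--sup condition \eqref{eq_inf_sup_condition} applied to the rearranged momentum error equation.

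Finally I would sum the phase-field and Navier--Stokes inequalities over $n$, place the genuinely quadratic error quantities on the left, and treat all remaining terms as either $O(\tau^2+h^{2(r+1)})$ consistency/projection contributions or as $d_n a_n$ factors feeding the discrete Gr\"{o}nwall inequality (Lemma \ref{lemma_discrete_Gronwall_inequation}); this closes the induction at level $n+1$ and delivers the four asserted estimates. The main obstacle is the nonlinear coupling: the phase-convection difference $b(\phi^{n+1},\mathbf{u}^{n+1},\cdot)-b(\phi_h^{n+1},\mathbf{u}_h^n,\cdot)$ simultaneously mixes the phase error, the velocity error, and the explicit time lag $\mathbf{u}^{n+1}-\mathbf{u}^n$, and controlling it optimally rather than losing a power of $h$ forces one to track carefully which factor carries the gradient and to lean on both the $L^\infty$ bounds supplied by the induction and the negative-norm machinery of Lemma \ref{Lemma_operators_H10203}; verifying that the induction hypothesis is self-consistent under the stated threshold $\tau\le\tau_0$ is the delicate bookkeeping that makes the scheme close.
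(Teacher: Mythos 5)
Your overall architecture (the $\Phi+\Theta$ splitting, an energy argument for the discrete parts, Gr\"onwall, inf--sup for the pressure) parallels the paper's, but your choice of dual pairing in the Cahn--Hilliard step is the wrong one for the norms this theorem asserts, and this is a genuine gap rather than a cosmetic difference. Testing \eqref{eq_error_equations_phi} with $(-\Delta_h)^{-1}\Theta_{\phi h}^{n+1}$ and \eqref{eq_error_equations_mu} with $\Theta_{\phi h}^{n+1}$ cancels \emph{both} occurrences of the cross term $(\Theta_{\mu h}^{n+1},\Theta_{\phi h}^{n+1})$, so the resulting identity contains no $\mu$-quantity at all on the left: it telescopes $\|(-\Delta_h)^{-1/2}\Theta_{\phi h}^{n+1}\|^2$ and dissipates $\tau\|\nabla\Theta_{\phi h}^{n+1}\|^2$, i.e.\ it is the classical $\ell^\infty(H^{-1})\cap\ell^2(H^1)$ estimate. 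Two claims of yours then fail. First, the asserted $\ell^2(L^2)$ bound for $e_\mu$ has no source, since no $\|\Theta_{\mu h}^{n+1}\|$ term ever appears on the left-hand side. Second, recovering $\max_n\|\Theta_{\phi h}^{n+1}\|$ through \eqref{eq_operators_estimates_neg_3} loses accuracy: pointwise in time you only control $\|\nabla\Theta_{\phi h}^{m+1}\|^2\le C(\tau^2+h^{2r})$ (the $H^1$-level estimate of Lemma \ref{lemma_ephi_error}/Theorem \ref{theorem_e_phi_L2}), so the interpolation $\|\Theta_{\phi h}^{m+1}\|^2\le \|(-\Delta_h)^{-1/2}\Theta_{\phi h}^{m+1}\|\,\|\nabla\Theta_{\phi h}^{m+1}\|$ yields at best $O(\tau^2+h^{2r+1})$, half a power of $h$ short of $h^{2(r+1)}$. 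The paper's pairing is precisely engineered to avoid this: in Lemma \ref{lemma_L2_e_phi_u_mu} it tests \eqref{eq_error_equations_phi} with $2\tau\Delta_h^{-1}\Theta_{\mu h}^{n+1}$ and \eqref{eq_error_equations_mu} with $\delta_\tau\Delta_h^{-1}\Theta_{\phi h}^{n+1}$, so that (using Ritz orthogonality to kill $\Phi_{\mu h}$ in the diffusion term) the left-hand side of \eqref{eq_combining_additional00001} carries both the telescoping $\|\Theta_{\phi h}^{n+1}\|^2$ \emph{and} the dissipation $\tau\bigl(\|\Theta_{\mu h}^{n+1}\|^2+\|e_\mu^{n+1}\|^2\bigr)$, which is exactly what the first assertion of the theorem needs.

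This defect propagates. In the Navier--Stokes estimate the force-coupling term $2\tau\bigl(\mu^{n+1}\nabla\phi^{n+1}-\mu_h^{n+1}\nabla\phi_h^{n+1},\Theta_{\tilde{\mathbf{u}} h}^{n+1}\bigr)$ must be absorbed using $\tfrac{\tau}{4}\|e_\mu^{n+1}\|^2$ and $L^2$-optimal bounds on $\Theta_{\phi h}^{n+1}$ (this is how the paper handles its term $Tb_{11}$); with only $H^1$- or $H^{-1}$-level control of the Cahn--Hilliard errors, your velocity bound comes out as $C(\tau+h^{r})$ rather than $C(\tau+h^{r+1})$, so the second assertion does not close either. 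A separate, independent gap is the pressure: applying \eqref{eq_inf_sup_condition} to the combined momentum error equation leaves the term $(\delta_\tau e_{\mathbf{u}}^{n+1},\mathbf{v}_h)$, which must be bounded by $\tau^{-1}\|e_{\mathbf{u}}^{n+1}-e_{\mathbf{u}}^n\|_{H^{-1}}$; summability of $\tau\|e_p^{n+1}\|^2$ therefore requires an estimate of $\sum_n\|e_{\mathbf{u}}^{n+1}-e_{\mathbf{u}}^n\|_{H^{-1}}^2$, which the paper obtains in Lemma \ref{lemma_eu_Hne1_norm} by testing with $\tau A_h^{-1}\bigl(e_{\mathbf{u}}^{n+1}-e_{\mathbf{u}}^n\bigr)$. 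Your outline never controls the velocity increments in $H^{-1}$, so the final pressure estimate is unsupported. (Your induction-based $L^\infty$ bootstrap is fine in spirit---the paper does the same for $\|\mathbf{u}_h^n\|_{L^\infty}$ in Remark \ref{remark_uinfty_boundedness_explanation}, while the bounds on $\phi_h$ come unconditionally from energy stability---but it does not repair the two gaps above.)
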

To prove the theorem, we need the following lemmas.
\begin{Lemma}\label{lemma_truncation_errors}
	Under the assumption of regularity \eqref{eq_varibles_satisfied_regularities}, for any $\tau \geq 0$ and $n=0,1,2,\cdots, N-1$, the truncation errors satisfy
	\begin{equation}
		\tau\sum_{n=0}^{n}\left(\|R_1^{n+1}\|^2+\|R_2^{n+1}\|^2+\|R_3^{n+1}\|^2+\|R_4^{n+1}\|^2\right)\leq C\tau^2.
	\end{equation}
\end{Lemma}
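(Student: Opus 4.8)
The plan is to estimate each truncation error separately through the classical decomposition into a \emph{temporal consistency} component and a \emph{spatial projection} component, and then to convert the time-integrated regularity of \eqref{eq_varibles_satisfied_regularities} into the asserted factor $\tau^2$ by summing each squared bound against the weight $\tau$. For $R_1^{n+1}=\delta_\tau R_h\phi^{n+1}-\partial_t\phi^{n+1}$ I would exploit the linearity of $R_h$ (so that $\delta_\tau R_h=R_h\delta_\tau$) and split
\[
R_1^{n+1}=\delta_\tau\bigl(R_h\phi^{n+1}-\phi^{n+1}\bigr)+\bigl(\delta_\tau\phi^{n+1}-\partial_t\phi^{n+1}\bigr).
\]
The second bracket is the backward-Euler consistency error; Taylor expansion with integral remainder gives $\delta_\tau\phi^{n+1}-\partial_t\phi^{n+1}=-\tau^{-1}\int_{t^n}^{t^{n+1}}(s-t^n)\,\partial_{tt}\phi(s)\,ds$, whence $\|\delta_\tau\phi^{n+1}-\partial_t\phi^{n+1}\|^2\le C\tau\int_{t^n}^{t^{n+1}}\|\partial_{tt}\phi\|^2\,ds$ by Cauchy--Schwarz. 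Multiplying by $\tau$, summing in $n$, and invoking $\phi_{tt}\in L^2(0,T;L^2)$ then produces the contribution $C\tau^2$; the first bracket is the difference quotient of the Ritz-projection error and is controlled directly through \eqref{eq_Dtau_psi_Rhpsi_L2_norm_inequation}.

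For $R_2^{n+1}=\delta_\tau\boldsymbol{P}_h\mathbf{u}^{n+1}-\partial_t\mathbf{u}^{n+1}$ I would use the identical splitting, now invoking the Stokes-projection difference-quotient estimate of Lemma~\ref{lemma_stokes_projection001} for the projection part (which absorbs both the velocity and pressure regularities $\mathbf{u}_t\in L^2(0,T;\mathbf{H}^{r+1})$ and $p_t\in L^2(0,T;H^r)$) together with the same Taylor argument for the temporal part. The pressure truncation $R_3^{n+1}=\nabla\bigl(P_hp^{n+1}-P_hp^n\bigr)=\tau\,\nabla P_h\bigl(\delta_\tau p^{n+1}\bigr)$ is the most direct: the explicit factor $\tau$ yields $\|R_3^{n+1}\|^2=\tau^2\|\nabla P_h\delta_\tau p^{n+1}\|^2$, so that $\tau\sum_n\|R_3^{n+1}\|^2=\tau^2\bigl(\tau\sum_n\|\nabla P_h\delta_\tau p^{n+1}\|^2\bigr)$, and the bracketed sum is dominated by $\int_0^T\|\nabla P_hp_t\|^2\,dt\le C$ using the $\mathbf{H}^1$-stability of the Stokes projection (Lemma~\ref{lemma_stokes_projection002}) and $p_t\in L^2(0,T;H^r)$. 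The remaining term $R_4^{n+1}$ (the phase-field/chemical-potential counterpart) is handled by the same template as $R_1$.

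The main obstacle will be reconciling the norms demanded by the statement with the regularity actually available. The temporal consistency estimate naturally lives in the norm in which the second time derivative is controlled; since the hypotheses furnish only $\mathbf{u}_{tt}\in L^2(0,T;\mathbf{H}^{-1})$ rather than full $L^2(L^2)$ regularity, the corresponding contribution of $R_2$ should be measured in the negative norm $\mathbf{H}^{-1}$ and only afterwards upgraded via Lemma~\ref{Lemma_0402} and the operator estimates of Lemma~\ref{Lemma_operators_H10203}. The genuinely delicate point, however, is the projection part of the phase-field truncation: because $\phi_t$ is assumed only in $L^2(0,T;H^1)$, the quantity $\tau\sum_n\|\delta_\tau(R_h\phi^{n+1}-\phi^{n+1})\|^2$ cannot be closed by the estimate \eqref{eq_Dtau_psi_Rhpsi_L2_norm_inequation} alone, which would require $\phi_t\in L^2(0,T;H^{r+1})$. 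I therefore expect that this term must be reorganized so that the difference quotient $\delta_\tau$ is kept off the low-regularity projection error — either by retaining it as a separate $h^{r+1}$-order consistency contribution absorbed into the main projection errors of the full error analysis, or by a discrete summation-by-parts in time — so that every surviving squared contribution carries the time-integral structure $\int_{t^n}^{t^{n+1}}\|\partial_t(\cdot)\|^2\,ds$ that the final summation against $\tau$ converts into $C\tau^2$.
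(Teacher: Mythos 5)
The paper never actually proves this lemma; it is dispatched with a citation to the two references listed below it, so the comparison here is against the standard argument in those references, which is precisely the splitting you propose: temporal consistency plus projection error, Taylor expansion with integral remainder for the former, projection difference-quotient estimates for the latter. Your treatment of the temporal part of $R_1$ is sound, and your factorization of $R_3$ is the right idea (though note that Lemma \ref{lemma_stokes_projection002} controls the velocity component $\boldsymbol{P}_h$, not the pressure component $P_h$; bounding $\|\nabla P_h\delta_\tau p^{n+1}\|$ requires instead an inverse-inequality argument built on Lemma \ref{lemma_stokes_projection001}). You have also correctly diagnosed the two places where the stated hypotheses are inadequate, which is more than the paper does.

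However, the proposal stops short of a proof in exactly those two places, and neither of your suggested repairs works as described. First, the claimed ``upgrade'' of the $\mathbf{H}^{-1}$ consistency bound for $R_2$ to an $L^2$ bound via Lemmas \ref{Lemma_0402} and \ref{Lemma_operators_H10203} goes in the wrong direction: Lemma \ref{Lemma_0402} gives $\|g\|_{H^{-1}}\leq C\|g\|$, not the reverse, and \eqref{eq_operators_estimates_neg_3} applies only to finite element functions and costs a gradient term. With only $\mathbf{u}_{tt}\in L^2\left(0,T;\mathbf{H}^{-1}(\Omega)\right)$ available from \eqref{eq_varibles_satisfied_regularities}, the $L^2$-in-space bound on $R_2$ asserted by the lemma is not obtainable; what is provable (and what the downstream analysis actually needs, since $R_2$ is only ever tested against discrete velocities whose gradients are controlled) is the corresponding bound on $\|R_2^{n+1}\|_{H^{-1}}^2$. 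Second, you leave the projection part of $R_1$ as an expected ``reorganization,'' but no reorganization inside this lemma can rescue it: by \eqref{eq_Dtau_psi_Rhpsi_L2_norm_inequation} that term (and likewise the projection part of $R_2$) is of size $h^{r+1}$ per time step with no factor of $\tau$, so after multiplying by $\tau$ and summing it contributes $O(h^{2(r+1)})$, and the advertised pure $C\tau^2$ bound is false however one argues. The provable statement --- granting additionally $\phi_t\in L^2\left(0,T;H^{r+1}(\Omega)\right)$, which \eqref{eq_varibles_satisfied_regularities} omits --- is $C\left(\tau^2+h^{2(r+1)}\right)$, which is also all that the error estimates of Section \ref{section_error_analysis} require. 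In short, your diagnosis of the defects is accurate, but a complete argument must either correct the lemma's statement (norms and right-hand side) or strengthen the regularity assumptions; the proposal does neither. (A final point outside your control: $R_4^{n+1}$ is never defined in the paper, so that part of the statement cannot be proved or disproved.)
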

The detailed proof of this Lemma can refer to \cite{2006_FengXiaobing_FullydiscretefiniteelementapproximationsoftheNavierStokesCahnHilliarddiffuseinterfacemodelfortwophasefluidflows,2015_Diegel_Analysis_of_a_mixed_finite_element_method_for_a_Cahn_Hilliard_Darcy_Stokes_system}. 
\begin{Lemma}\label{Lemma_0403}
	For any $h,\tau >0$, we have 
	\begin{equation}
		\|\nabla\left((\phi^{n+1})^3-(\phi_h^{n+1})^3\right)\|\leq C\|\nabla e_{\phi}^{n+1}\|.
	\end{equation}
	where $C$ is independent of $\tau$ and $h$, $~n=0,1,2,\cdots, N-1.$
\end{Lemma}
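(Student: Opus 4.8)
The plan is to reduce the claim to an algebraic identity for the difference of cubes followed by Hölder estimates. Writing $a=\phi^{n+1}$ and $b=\phi_h^{n+1}$ so that $e_\phi^{n+1}=a-b$, I would first record the pointwise identity $a^3-b^3=(a^2+ab+b^2)e_\phi^{n+1}$ and apply the product rule to obtain $\nabla(a^3-b^3)=3a^2\nabla e_\phi^{n+1}+3e_\phi^{n+1}(a+b)\nabla b$. This isolates a first term that already carries $\nabla e_\phi^{n+1}$ and a second, zeroth-order term carrying only $e_\phi^{n+1}$; the whole difficulty lies in controlling the latter by $\|\nabla e_\phi^{n+1}\|$.

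For the first term I would use the regularity assumption \eqref{eq_varibles_satisfied_regularities}, namely $\phi\in L^\infty(0,T;H^{r+1}(\Omega))$ with $r\ge 2$, together with the embedding $H^{r+1}\hookrightarrow L^\infty$, to bound $\|a^2\nabla e_\phi^{n+1}\|\le\|a\|_{L^\infty}^2\|\nabla e_\phi^{n+1}\|\le C\|\nabla e_\phi^{n+1}\|$. For the second term I would apply Hölder's inequality in the form $\|e_\phi^{n+1}(a+b)\nabla b\|\le\|a+b\|_{L^\infty}\|\nabla b\|_{L^4}\|e_\phi^{n+1}\|_{L^4}$. The factors involving $b=\phi_h^{n+1}$ are controlled by the a priori bounds already established: Lemma \ref{lemma_boundness_phi_mu} gives $\|\Delta_h\phi_h^{n+1}\|\le C$, hence $\|\phi_h^{n+1}\|_{H^2}\le C$, and the embeddings $H^2\hookrightarrow L^\infty$ and $H^2\hookrightarrow W^{1,4}$ then yield $\|\phi_h^{n+1}\|_{L^\infty}\le C$ and $\|\nabla\phi_h^{n+1}\|_{L^4}\le C$ (alternatively the logarithmic inverse inequality \eqref{eq_inverse_inequalities002} can supply the $L^\infty$ factor). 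Combined with $\|a\|_{L^\infty}\le C$ and the Sobolev embedding \eqref{eq_basic_inequalities_001}, this produces $\|e_\phi^{n+1}(a+b)\nabla b\|\le C\|e_\phi^{n+1}\|_{H^1}$, so that at this stage $\|\nabla(a^3-b^3)\|\le C\|\nabla e_\phi^{n+1}\|+C\|e_\phi^{n+1}\|$.

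The main obstacle is the final reduction of the zeroth-order factor $\|e_\phi^{n+1}\|$ to $\|\nabla e_\phi^{n+1}\|$; indeed, a pure constant shift $b=a+\mathrm{const}$ shows that some control of the mean of $e_\phi^{n+1}$ is unavoidable, so this step is not purely formal. I would close the estimate with a Poincaré--Wirtinger inequality $\|e_\phi^{n+1}-\overline{e_\phi^{n+1}}\|\le C\|\nabla e_\phi^{n+1}\|$ and then argue that the mean $\overline{e_\phi^{n+1}}$ is itself controlled: the exact phase field conserves mass, $\int_\Omega\phi^{n+1}=\int_\Omega\phi^0$, while testing \eqref{eq_fully_discrete_CHNS_scheme_phi} with $w_h\equiv 1$ and using the boundary condition $\mathbf{u}_h^n\cdot\mathbf{n}|_{\partial\Omega}=0$ with the discrete incompressibility \eqref{eq_fully_discrete_CHNS_scheme_tilde_incompressible_condition} shows that the discrete mass is preserved up to a consistency defect. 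Substituting the resulting bound $\|e_\phi^{n+1}\|\le C\|\nabla e_\phi^{n+1}\|$ back into the previous paragraph yields the desired estimate $\|\nabla((\phi^{n+1})^3-(\phi_h^{n+1})^3)\|\le C\|\nabla e_\phi^{n+1}\|$. I expect the verification that the discrete mean defect is genuinely dominated by $\|\nabla e_\phi^{n+1}\|$ to be the technically delicate point, since the scheme enforces incompressibility only weakly against $\mathring{S}_h^{r-1}$.
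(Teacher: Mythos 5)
Your decomposition is, up to a mirror symmetry, exactly the paper's: the paper writes
\begin{equation*}
\nabla\bigl((\phi^{n+1})^3-(\phi_h^{n+1})^3\bigr)=3(\phi_h^{n+1})^2\nabla e_{\phi}^{n+1}+3\nabla\phi^{n+1}\bigl(\phi^{n+1}+\phi_h^{n+1}\bigr)e_{\phi}^{n+1},
\end{equation*}
i.e.\ it puts the \emph{discrete} factor $(\phi_h^{n+1})^2$ on the gradient term and the \emph{exact} gradient $\nabla\phi^{n+1}$ (controlled in $L^6$ by the regularity assumption \eqref{eq_varibles_satisfied_regularities}) on the zeroth-order term, then estimates by H\"older exactly as you do. Your variant instead places $\nabla\phi_h^{n+1}$ in the zeroth-order term, which forces you to invoke discrete bounds; here your inference ``$\|\Delta_h\phi_h^{n+1}\|\le C$ hence $\|\phi_h^{n+1}\|_{H^2}\le C$'' is not literally valid, since $C^0$ finite element functions do not lie in $H^2$. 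What is available are the discrete embeddings of the form $\|\nabla v_h\|_{L^4}\le C\bigl(\|\nabla v_h\|+\|\Delta_h v_h\|\bigr)$ and the corresponding $L^\infty$ bound (Lemma 2.14 of the cited Diegel et al.\ reference, used by the paper in the proof of Lemma \ref{lemma_boundness_phi_mu}). This is a repairable slip rather than a structural one; note the paper's own version needs the same kind of discrete input to justify its factor $\|\phi_h^{n+1}\|_{L^{\infty}}^2$, since Lemma \ref{Lemma_0301} only gives an $H^1$ bound.

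The substantive divergence is the zeroth-order term, and there your diagnosis is sharper than the paper's treatment, but your repair does not close. The paper passes from $\|e_{\phi}^{n+1}\|_{L^6}$ directly to $C\|\nabla e_{\phi}^{n+1}\|$ with no comment, i.e.\ it silently uses precisely the Poincar\'e-type inequality whose validity you question; your constant-shift example correctly shows this cannot hold without control of the mean of $e_{\phi}^{n+1}$. However, the mass-conservation argument you propose runs into the obstruction you yourself flag: testing \eqref{eq_fully_discrete_CHNS_scheme_phi} with $w_h\equiv 1$ and integrating by parts leaves the defect $\bigl(\phi_h^{n+1},\nabla\cdot\mathbf{u}_h^n\bigr)$, which does not vanish because $\phi_h^{n+1}\in S_h^{r}$ while discrete incompressibility holds only against $S_h^{r-1}$. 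This defect is a consistency-type quantity that accumulates over time steps; it is not dominated by $\|\nabla e_{\phi}^{n+1}\|$ at the same time level, so the target bound $\|e_{\phi}^{n+1}\|\le C\|\nabla e_{\phi}^{n+1}\|$ is not obtainable along this route. The honest conclusions are either to keep the extra term (state the lemma as $\|\nabla((\phi^{n+1})^3-(\phi_h^{n+1})^3)\|\le C\|\nabla e_{\phi}^{n+1}\|+C\|e_{\phi}^{n+1}\|$ and absorb $\|e_{\phi}^{n+1}\|$ by Gr\"onwall wherever the lemma is invoked), or to bound the discrete mean drift separately by a small consistency quantity. So, as a standalone proof, your proposal has a genuine gap at its final step --- but it is the very gap that the paper's own one-line estimate conceals, and your write-up is the more candid of the two about where the difficulty sits.
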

\begin{proof}
	According to \cite{2008_Kay_David_and_Welford_Richard_Finite_element_approximation_of_a_Cahn_Hilliard_Navier_Stokes_system,2015_Diegel_Analysis_of_a_mixed_finite_element_method_for_a_Cahn_Hilliard_Darcy_Stokes_system}, we have
	\begin{equation}
		\begin{aligned}
			\|\nabla\left((\phi^{n+1})^3-(\phi_h^{n+1})^3\right)\|&\leq3\|(\phi_h^{n+1})^2\nabla e_{\phi}^{n+1}\|+3\|\nabla\phi^{n+1}\left(\phi^{n+1}+\phi_h^{n+1}\right)e_{\phi}^{n+1}\|\\
			&\leq 3\|\phi_h^{n+1}\|_{L^{\infty}}^2\|\nabla e_{\phi}^{n+1}\|+
			3\|\nabla \phi^{n+1}\|_{L^6}\|\phi^{n+1}+\phi_h^{n+1}\|_{L^6}\|e_{\phi}^{n+1}\|_{L^6}\\
			&\leq 3\left(\|\phi_h^{n+1}\|_{L^{\infty}}^2+C\|\nabla \phi^{n+1}\|_{L^6}\|\phi^{n+1}+\phi_h^{n+1}\|_{H^1}\right)\|\nabla e_{\phi}^{n+1}\|\\
			&\leq C\|\nabla e_{\phi}^{n+1}\|.
		\end{aligned}
	\end{equation}
	Then, using the Lemma \ref{Lemma_0301} and the regularity \eqref{eq_varibles_satisfied_regularities}, we get the result.
\end{proof}
\begin{Lemma}
	\label{lemma_ephi_error}
	Under the assumption of regularity \eqref{eq_varibles_satisfied_regularities}, for all $m\geq 0$, there exists a positive constant that does not depend on $\tau$ and $h$ such that
	\begin{equation}
		\begin{aligned}
			\| \Theta_{\phi h}^{m+1}\|_{H^1}^2&+\tau\sum_{n=0}^{m}\|\nabla e_\mu^{n+1}\|_{H^1}^2\\
			\leq &~C\left(\tau^2+h^{2r}+\tau\sum_{n=0}^{m}\left(\|\phi^{n+1}\|_{H^2}^2
				+\|\nabla\mathbf{u}_h^n\|^2+\|\nabla\mathbf{u}_h^{n}\|^2\|\nabla\phi_h^{n+1}\|^2\right)\|\Theta_{\mathbf{u} h}^{n+1}\|^2\right).
		\end{aligned}
	\end{equation} 
\end{Lemma}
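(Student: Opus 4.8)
The plan is to run a discrete energy argument on the error equations \eqref{eq_error_equations_phi}--\eqref{eq_error_equations_mu}, working with the genuinely discrete parts $\Theta_{\phi h}^{n+1}$, $\Theta_{\mu h}^{n+1}$ of the errors and disposing of the projection parts $\Phi_{\phi h}^{n+1},\Phi_{\mu h}^{n+1}$ by the Ritz orthogonality \eqref{eq_Ritz_projection} (which annihilates every gradient cross term $(\nabla\Phi,\nabla\chi_h)$) and the approximation bounds \eqref{eq_psi_Rhpsi_Ls_norm_inequation}, \eqref{eq_Dtau_psi_Rhpsi_L2_norm_inequation}. Concretely, I would test \eqref{eq_error_equations_phi} with $w_h=\Theta_{\mu h}^{n+1}$ and \eqref{eq_error_equations_mu} with $\varphi_h=\delta_\tau\Theta_{\phi h}^{n+1}$ and subtract; the decisive point is that the coupling terms generated by the time-derivative pairing, $(\delta_\tau\Theta_{\phi h}^{n+1},\Theta_{\mu h}^{n+1})$, are identical in the two equations and cancel, leaving no uncontrolled product of the two discrete errors.

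After the cancellation the diffusion term reduces, via \eqref{eq_Ritz_projection}, to $(\nabla e_\mu^{n+1},\nabla\Theta_{\mu h}^{n+1})=\|\nabla\Theta_{\mu h}^{n+1}\|^2$, which supplies the dissipation $\tau\sum_{n}\|\nabla\Theta_{\mu h}^{n+1}\|^2$, while the term $(\nabla\Theta_{\phi h}^{n+1},\nabla\delta_\tau\Theta_{\phi h}^{n+1})$ telescopes through $2(a,a-b)=\|a\|^2-\|b\|^2+\|a-b\|^2$ to produce $\tfrac12\|\nabla\Theta_{\phi h}^{m+1}\|^2$ after multiplying by $\tau$ and summing from $0$ to $m$. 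This accounts for the gradient part of $\|\Theta_{\phi h}^{m+1}\|_{H^1}$ and for the left-hand dissipation in $e_\mu$ (up to a projection error controlled by $h^{r}$). The missing $L^2$ part of $\|\Theta_{\phi h}^{m+1}\|_{H^1}$ I would recover from inequality \eqref{eq_operators_estimates_neg_3} of Lemma \ref{Lemma_operators_H10203}, $\|\Theta_{\phi h}\|^2\le\varepsilon\|\nabla\Theta_{\phi h}\|^2+C\varepsilon^{-1}\|(-\Delta_h)^{-1/2}\Theta_{\phi h}\|^2$, together with a companion negative-norm estimate obtained by testing \eqref{eq_error_equations_phi} with $w_h=(-\Delta_h)^{-1}\Theta_{\phi h}^{n+1}$.

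The two genuinely nonlinear contributions drive the analysis. For the convection mismatch $b(\phi^{n+1},\mathbf{u}^{n+1},w_h)-b(\phi_h^{n+1},\mathbf{u}_h^{n},w_h)$ I would split off the velocity increment and the phase error, $b(\phi^{n+1},\mathbf{u}^{n+1}-\mathbf{u}_h^{n},w_h)+b(e_\phi^{n+1},\mathbf{u}_h^{n},w_h)$, and further expand $\mathbf{u}^{n+1}-\mathbf{u}_h^{n}=(\mathbf{u}^{n+1}-\mathbf{u}^n)+\Phi_{\mathbf{u} h}^{n}+\Theta_{\mathbf{u} h}^{n}$; using the trilinear bound \eqref{eq_trilinear_forms_bound}, the interpolation inequalities \eqref{eq_basic_inequalities_001}--\eqref{eq_basic_inequalities_003} and the a priori bounds of Lemmas \ref{Lemma_0301} and \ref{lemma_boundness_phi_mu}, the increment and projection pieces give $O(\tau^2+h^{2r})$ and multiples of $\|\nabla\Theta_{\phi h}\|^2$ absorbed into the telescoped/dissipative quantities, whereas the pieces containing the still-uncontrolled velocity error $\Theta_{\mathbf{u} h}$ are deliberately left on the right-hand side, producing exactly the coefficients $\|\phi^{n+1}\|_{H^2}^2$, $\|\nabla\mathbf{u}_h^{n}\|^2$ and $\|\nabla\mathbf{u}_h^{n}\|^2\|\nabla\phi_h^{n+1}\|^2$ multiplying $\|\Theta_{\mathbf{u} h}^{n+1}\|^2$. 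The cubic difference $\tfrac1{\epsilon^2}\big((\phi_h^{n+1})^3-(\phi^{n+1})^3-e_\phi^n,\cdot\big)$ I would control with Lemma \ref{Lemma_0403}, which bounds $\|\nabla((\phi^{n+1})^3-(\phi_h^{n+1})^3)\|\le C\|\nabla e_\phi^{n+1}\|$; splitting $e_\phi=\Phi_{\phi h}+\Theta_{\phi h}$ then leaves an $O(h^{2r})$ projection remainder and a $\|\nabla\Theta_{\phi h}\|^2$ contribution fed into Gr\"onwall, while the explicit lag $e_\phi^n$ only brings a previous-time-level term.

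Finally I would multiply by $\tau$, sum in $n$, insert the truncation bound $\tau\sum\|R_1^{n+1}\|^2\le C\tau^2$ of Lemma \ref{lemma_truncation_errors}, and invoke the discrete Gr\"onwall inequality (Lemma \ref{lemma_discrete_Gronwall_inequation}) to eliminate the self-referential $\|\Theta_{\phi h}^{n+1}\|_{H^1}^2$ terms, which requires the time-step smallness $\tau\le\tau_0$ so that the Gr\"onwall factors $\sigma_n$ stay bounded. I expect the main obstacle to be precisely the treatment of the nonlinear terms against the time-difference test function $\delta_\tau\Theta_{\phi h}^{n+1}$: one must organise each trilinear and cubic difference so that every occurrence of $\nabla\Theta_{\mu h}$ and $\nabla\Theta_{\phi h}$ is absorbed into the dissipation or the telescoped energy, the velocity error $\Theta_{\mathbf{u} h}$ survives with the three stated coefficients (to be closed later through the momentum error equations \eqref{eq_error_equations_ns}--\eqref{eq_error_equations_incompressible}), and the $L^4$/$L^\infty$ factors estimated through the inverse inequalities \eqref{eq_inverse_inequalities001}--\eqref{eq_inverse_inequalities002} do not spoil the optimal order, which is where the constraint $\tau\le Ch$ is used.
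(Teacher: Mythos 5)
Your skeleton matches the paper's: the same dual test-function pairing (phase error equation against $\Theta_{\mu h}^{n+1}$, chemical-potential error equation against $\delta_\tau\Theta_{\phi h}^{n+1}$) with cancellation of $(\delta_\tau\Theta_{\phi h}^{n+1},\Theta_{\mu h}^{n+1})$, Ritz orthogonality to kill the projection gradients, the same three-way splitting philosophy for the convection mismatch so that $\|\Theta_{\mathbf{u}h}\|^2$ survives with the stated coefficients, Lemma \ref{Lemma_0403} for the cubic, and truncation plus Gr\"onwall at the end. But there is a genuine gap at precisely the point you flag as "the main obstacle" and then leave unresolved. After the cancellation, two right-hand-side terms are paired with $\delta_\tau\Theta_{\phi h}^{n+1}$: the projection error $\bigl(\Phi_{\mu h}^{n+1},\delta_\tau\Theta_{\phi h}^{n+1}\bigr)$ and the cubic difference $\frac{\lambda}{\epsilon^2}\bigl((\phi_h^{n+1})^3-(\phi^{n+1})^3-e_\phi^n,\delta_\tau\Theta_{\phi h}^{n+1}\bigr)$ (the paper's $T_5$ and $T_3$). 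The quantity $\delta_\tau\Theta_{\phi h}^{n+1}$ is controlled neither by the dissipation $\|\nabla\Theta_{\mu h}^{n+1}\|^2$ nor by the telescoped energy, so these terms cannot be "absorbed" as you propose. The paper's essential device — which your proposal never supplies — is to recast the phase error equation as \eqref{eq_delta_tau_Theta_phin1}, deduce the bound \eqref{eq_Delta_Theta_phih_n001} of the form $\|\delta_\tau\Theta_{\phi h}^{n+1}\|_{H^{-1}}\le C\|\nabla e_\mu^{n+1}\|+\|R_1^{n+1}\|+(\text{lower-order error terms})$, and then pair via the duality of Lemma \ref{Lemma_0402}, $|(g,v)|\le C\|\nabla g\|\,\|v\|_{H^{-1}}$; this is exactly why the gradient bound of Lemma \ref{Lemma_0403} is the relevant estimate for the cubic difference, and it is what lets the resulting $\|\nabla e_\mu^{n+1}\|$ contribution be absorbed into the dissipation. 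Your only negative-norm ingredient, testing with $(-\Delta_h)^{-1}\Theta_{\phi h}^{n+1}$, concerns $\Theta_{\phi h}$ itself, not its time increment, and so does not repair this.

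A secondary deficiency concerns the $L^2$ parts of the claimed $H^1$ norms. The paper obtains $\|\Theta_{\phi h}\|^2$ and the $L^2$ part of $\|e_\mu\|_{H^1}$ by adding two further tests ($\Theta_{\phi h}^{n+1}$ in the phase equation, $\Theta_{\mu h}^{n+1}$ in the potential equation), whose cross terms $(\nabla\Theta_{\mu h}^{n+1},\nabla\Theta_{\phi h}^{n+1})$ cancel because $M=\lambda=1$; this yields the full left-hand side of \eqref{eq_testing_four_terms} in one stroke. Your alternative — inequality \eqref{eq_operators_estimates_neg_3} plus a negative-norm bound — is shakier: the operator $(-\Delta_h)^{-1}$ of \eqref{eq_discrete_Laplacian_operator11} is defined on zero-mean functions, and $\Theta_{\phi h}^{n+1}$ has no exact discrete mass conservation under this scheme (the convection term $b(\phi_h^{n+1},\mathbf{u}_h^n,1)$ need not vanish since $\phi_h^{n+1}\notin\mathring{S}_h^{r-1}$), and in any case this route gives no control of $\|\Theta_{\mu h}^{n+1}\|_{L^2}$, which the lemma's dissipation term requires. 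So while your overall architecture is the right one, the proof as proposed does not close.
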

\begin{proof}
	To obtain estimates for $e_\phi^{n+1}$ and $e_\mu^{n+1}$, testing $w_h=2\tau\left(\Theta_{\mu h}^{n+1},\Theta_{\phi h}^{n+1}\right)$ and $\varphi_h=2\tau\left(\Theta_{\mu h}^{n+1},\delta_\tau \Theta_{\phi h}^{n+1}\right)$ in \eqref{eq_error_equations_phi} and \eqref{eq_error_equations_mu}, we get
	\begin{equation}
		\label{eq_testing_four_terms}
		\begin{aligned}
			&\|\Theta_{\phi h}^{n+1}\|_{H^1}^2-\|\Theta_{\phi h}^{n}\|_{H^1}^2+\|\Theta_{\phi h}^{n+1}-\Theta_{\phi h}^{n}\|_{H^1}^2
				+\tau\left(\|\Theta_{\phi h}^{n+1}\|_{H^1}^2+\|e_{\mu}^{n+1}\|_{H^1}^2\right)\\
			=&~\tau\|\Phi_{\mu h}\|_{H^1}^2+2\tau\left(R_1^{n+1},\Theta_{\mu h}^{n+1}+\Theta_{\phi h}^{n+1}\right)
				-2\tau\left(\mathbf{u}^{n+1}\cdot\nabla\phi^{n+1}-\mathbf{u}_h^n\cdot\nabla\phi_h^{n+1},\Theta_{\mu h}^{n+1}+\Theta_{\phi h}^{n+1}\right)\\
				&+2\tau\left((\phi^{n+1})^3-\phi^n-\left((\phi_h^{n+1})^3-\phi_h^n\right),\Theta_{\mu h}^{n+1}-\delta_\tau \Theta_{\phi h}^{n+1}\right)\\
					&-2\tau\left(\delta_\tau \Phi_{\phi h}^{n+1},\Theta_{\mu h}^{n+1}+\Theta_{\phi h}^{n+1}\right)
				    +2\tau\left( \Phi_{\mu h}^{n+1},\delta_\tau \Theta_{\phi h}^{n+1}\right)\\
			=&~\tau\|\Phi_{\mu h}^{n+1}\|_{H^1}^2+\sum_{n=1}^{5}T_i.
		\end{aligned}
	\end{equation}
We now estimate the right-hand side terms of \eqref{eq_testing_four_terms}.
\begin{equation}
	\label{eq_estimate_error_T1}
	\begin{aligned}
		\big|T_1\big|=\bigg|2\tau\left(R_1^{n+1},\Theta_{\mu h}^{n+1}+\Theta_{\phi h}^{n+1}\right)\bigg|
		\leq \frac{\tau}{7}\left(\|\Theta_{\mu h}^{n+1}\|_{H^1}^2+\|\Theta_{\phi h}\|_{H^1}^2\right)+C\tau\|R_1^{n+1}\|^2,
	\end{aligned}
\end{equation}
and
\begin{equation}
	\label{eq_estimate_error_T2}
	\begin{aligned}
		\big|T_2\big|=&~\bigg|-2\tau\left(\mathbf{u}^{n+1}\cdot\nabla\phi^{n+1}-\mathbf{u}_h^n\cdot\nabla\phi_h^{n+1},\Theta_{\mu h}^{n+1}+\Theta_{\phi h}^{n+1}\right)\bigg|\\
		= &~2\tau\bigg|\left(\left(\mathbf{u}^{n+1}-\mathbf{u}_h^{n+1}\right)\cdot\nabla\phi^{n+1},\Theta_{\mu h}^{n+1}+\Theta_{\phi h}^{n+1}\right)
		+\left(\mathbf{u}_h^n\cdot\nabla\left(\phi^{n+1}-\phi_h^{n+1}\right),\Theta_{\mu h}^{n+1}+\Theta_{\phi h}^{n+1}\right)\\
		&+\left(\left(\mathbf{u}_h^{n+1}-\mathbf{u}_h^n\right)\cdot\nabla\phi^{n+1},\Theta_{\mu h}^{n+1}+\Theta_{\phi h}^{n+1}\right)\bigg|\\
		=&~2\tau\bigg|\left(e_{\mathbf{u}}^{n+1}\cdot\nabla\phi^{n+1},\Theta_{\mu h}^{n+1}+\Theta_{\phi h}^{n+1}\right)
			+\left(\mathbf{u}_h^{n}\cdot\nabla e_{\phi}^{n+1},\Theta_{\mu h}^{n+1}+\Theta_{\phi h}^{n+1}\right)\\
			&+\left(\left(\mathbf{u}_h^{n+1}-\mathbf{u}_h^n\right)\cdot\nabla\phi^{n+1},\Theta_{\mu h}^{n+1}+\Theta_{\phi h}^{n+1}\right)
			\bigg|\\
		\leq&~\frac{\tau}{7}\left(\|\Theta_{\mu h}^{n+1}\|_{H^1}^2+\|\Theta_{\phi h}^{n+1}\|_{H^1}^2\right)
			+C\tau\|\phi^{n+1}\|_{H^2}^2\left(\|\Theta_{\mathbf{u} h}^{n+1}\|^2+\|\Phi_{\mathbf{u} h}^{n+1}\|^2\right)\\
			&+C\tau\|\nabla\mathbf{u}_h^{n}\|^2\left(\|\nabla\Phi_{\phi h}^{n+1}\|^2+\|\nabla\Theta_{\phi h}^{n+1}\|^2
			\right)+C\tau\left(\|\mathbf{u}_t\|_{L^{\infty}(0,T;L^2(\Omega))}^2\|\nabla\phi^{n+1}\|^2\right).
	\end{aligned}
\end{equation}
 We can recast  \eqref{eq_error_equations_phi} by the 
following
\begin{equation}
	\label{eq_delta_tau_Theta_phin1}
	\delta_\tau\Theta_{\phi h}^{n+1}=\Delta_he_{\mu}^{n+1}-\delta_\tau\Phi_{\phi h}^{n+1}
		-\left(\mathbf{u}^{n+1}\cdot\nabla\phi^{n+1}-\mathbf{u}_h^n\cdot\nabla\phi_h^{n+1}\right)
		+R_1^{n+1}.
\end{equation}
Using the \eqref{eq_basic_inequalities_001}, \eqref{eq_basic_inequalities_003}, Lemma \ref{lemma_stokes_projection001} 
and \ref{lemma_stokes_projection002}, we can obtain
\begin{equation}
	\label{eq_Delta_Theta_phih_n001}
	\begin{aligned}
		&\|\delta_\tau \Theta_{\phi h}^{n+1}\|_{H^{-1}}\leq C\|\nabla e_{\mu}^{n+1}\|+\|R_1^{n+1}\|\\
			&+C\tau\left(\|\phi^{n+1}\|_{H^2}+\|\nabla\mathbf{u}_h^n\|+\|\nabla\mathbf{u}_h^n\|\|\phi_h^{n+1}\|_{H^1}\right)
			\left(\|\Phi_{\mathbf{u} h}^{n+1}\|+\|\Theta_{\mathbf{u} h}^{n+1}\|+\|\Phi_{\phi h}^{n+1}\|_{H^1}+\|\Theta_{\phi h}^{n+1}\|_{H^1}\right).
	\end{aligned}
\end{equation}
According to the Lemma \ref{Lemma_0403} and the inequality \eqref{eq_phi3_error}, we estimate the right-hand side of \eqref{eq_testing_four_terms} as follows
\begin{equation}
	\label{eq_estimate_error_T3}
	\begin{aligned}
		\big|T_3\big|=&~2\tau\bigg|\left((\phi^{n+1})^3-\phi^n-\left((\phi_h^{n+1})^3-\phi_h^n\right),\Theta_{\mu h}^{n+1}-\delta_\tau \Theta_{\phi h}^{n+1}\right)\bigg|\\
		\leq &~C\tau\bigg|\left( (\phi^{n+1})^3-(\phi_h^{n+1})^3,\Theta_{\mu h}^{n+1}-\delta_\tau \Theta_{\phi h}^{n+1}\right)+\left(\phi^n-\phi_h^n,\Theta_{\mu h}^{n+1}-\delta_\tau \Theta_{\phi h}^{n+1}\right)\bigg|\\
		\leq &~C\tau\left(\|(\phi^{n+1})^3-(\phi_h^{n+1})^3\|\|\Theta_{\mu h}^{n+1}\|+\|\nabla\left((\phi^{n+1})^3-(\phi_h^{n+1})^3\right)\|\|\delta_\tau\Theta_{\phi h}^{n+1}\|_{H^{-1}}\right)\\
			&+C\tau\left(\|\phi^{n}-\phi_h^{n}\|\|\Theta_{\mu h}^{n+1}\|+\|\nabla\left(\phi^{n}-\phi_h^{n}\right)\|\|\delta_\tau\Theta_{\phi h}^{n+1}\|_{H^{-1}}\right)\\
		\leq&~C\tau\left(\|\Theta_{\mu h}^{n+1}\|^2+\|\nabla e_{\mu}^{n+1}\|^2+\|R_1^{n+1}\|^2\right)
			+C\tau\left(\|\phi^{n+1}\|_{H^2}^2+\|\nabla\mathbf{u}_h^n\|^2\right.\\
			&\left.+\|\nabla\mathbf{u}_h^n\|^2\|\phi_h^{n+1}\|_{H^1}^2\right)
			\left(\|\Theta_{\mathbf{u} h}^{n+1}\|^2+\|\Phi_{\mathbf{u} h}^{n+1}\|^2
			+\|\Theta_{\phi h}^{n+1}\|_{H^1}^2+\|\Phi_{\phi h}^{n+1}\|_{H^1}^2\right)+C h^{2(r+1)}.
	\end{aligned}
\end{equation}
Using the \eqref{eq_psi_Rhpsi_Hneg1_norm_inequation} and \eqref{eq_Delta_Theta_phih_n001}, we can obtain error estimates of $T_4$ and $T_5$ as follows:  
\begin{equation}
	\label{eq_estimate_error_T4}
	\begin{aligned}
		\big|T_4\big|=&~\big|-2\tau\left(\delta_\tau \Phi_{\phi h}^{n+1},\Theta_{\mu h}^{n+1}+\Theta_{\phi h}^{n+1}\right)\big|\\
		\leq &~2\tau\left(\|\Theta_{\mu h}^{n+1}\|_{H^1}+\|\Theta_{\phi h}^{n+1}\|_{H^1}\right)\|\delta_\tau \Phi_{\phi h}^{n+1}\|_{H^{-1}}\\
		\leq&~2\tau\left(\|\Theta_{\mu h}^{n+1}\|_{H^1}^2+\|\Theta_{\phi h}^{n+1}\|_{H^1}^2\right)+C h^{2(r+1)},
	\end{aligned}
\end{equation}
and
\begin{equation}
	\label{eq_estimate_error_T5}
	\begin{aligned}
		\big|T_5\big|=&\big| 2\tau\left( \Phi_{\mu h}^{n+1},\delta_\tau \Theta_{\phi h}^{n+1}\right)\big|\leq 2\tau\| \Phi_{\mu h}^{n+1}\|_{H^1}\|\delta_\tau \Theta_{\phi h}^{n+1}\|_{H^{-1}}\\
		\leq&~C\|\nabla e_{\mu}^{n+1}\|^2+C\tau\|R_1^{n+1}\|^2+C\tau\left(\|\phi^{n+1}\|_{H^2}^2+\|\nabla\mathbf{u}_h^n\|^2\right.\\
		&\left.+\|\nabla\mathbf{u}_h^n\|^2\|\phi_h^{n+1}\|_{H^1}^2\right)
		\left(\|\Phi_{\mathbf{u} h}^{n+1}\|^2+\|\Theta_{\mathbf{u} h}^{n+1}\|^2+\|\Phi_{\phi h}^{n+1}\|_{H^1}^2+\|\Theta_{\phi h}^{n+1}\|_{H^1}^2\right)
		+C h^{2(r+1)}.
	\end{aligned}
\end{equation}
Combining the inequalities $T_1,\cdots,T_5$ with \eqref{eq_testing_four_terms}, and
summing from $n=0$ to $m$, we can obtain
\begin{equation}
	\label{eq_T1_T5_error_estimates_results}
	\begin{aligned}
		\|\Theta_{\phi h}^{m+1}\|_{H^1}^2+&\frac{\tau}{2}\sum_{n=0}^{m}\|e_{\mu}^{n+1}\|_{H^1}^2\leq 
		\|\Theta_{\phi h}^{0}\|_{H^1}^2+C\tau\sum_{n=0}^{m}\|R_1^{n+1}\|^2+C\tau h^{2(r+1)}\\
		&+C\tau\sum_{n=0}^{m}\left(\|\phi^{n+1}\|_{H^2}^2+\|\nabla\mathbf{u}_h^n\|^2+\|\nabla\mathbf{u}_h^n\|^2\|\phi_h^{n+1}\|_{H^1}^2\right)
		\left(\|\Phi_{\mathbf{u} h}^{n+1}\|^2\right.\\
		&\left.\qquad+\|\Theta_{\mathbf{u} h}^{n+1}\|^2+\|\Phi_{\phi h}^{n+1}\|_{H^1}^2+\|\Theta_{\phi h}^{n+1}\|_{H^1}^2\right).
	\end{aligned}
\end{equation}
Using the \eqref{eq_psi_Rhpsi_Ls_norm_inequation}, and Lemma \ref{lemma_stokes_projection001}, \ref{Lemma_0301}, 
\ref{lemma_truncation_errors}, and \ref{lemma_discrete_Gronwall_inequation}, we can obtain the result of this Lemma. 
\end{proof}
\begin{Lemma}\label{lemma_u_p_L2_norm_boundness}
	Under the assumption of regularity \eqref{eq_varibles_satisfied_regularities}, for all $m\geq 0$,  there exists a positive constant that does not depend on $\tau$ and $h$ such that
	\begin{equation}
		\begin{aligned}
			\|\Theta_{\mathbf{u} h}^{m+1}\|^2&+\tau^2\|\nabla\Theta_{p h}^{m+1}\|^2+\tau\sum_{n=0}^{m}\|\nabla\Theta_{\tilde{\mathbf{u}} h}^{n+1}\|^2
			\leq C\left(\tau^2+h^{2r}+\frac{\tau}{4}\sum_{n=0}^{m}\|e_{\mu}^{n+1}\|_{H^1}^2\right)\\
			&+C\tau\sum_{n=0}^{m}\left(\|\phi^{n+1}\|_{H^2}^2+\|\mu_h^{n+1}\|_{H^1}^2+\|\mu_h^{n+1}\|_{H^1}^2\|\nabla\phi_h^{n+1}\|^2+\|\nabla\mathbf{u}_h^{n+1}\|^2\right.\\
			&\left.+\|\nabla\mathbf{u}_h^n\|^2\|\mathbf{u}_h^n\|^2
			\right)\|\Theta_{\phi h}^{n+1}\|_{H^1}^2.
		\end{aligned}
	\end{equation}
\end{Lemma}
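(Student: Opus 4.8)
The plan is to replay, now at the level of the error equations \eqref{eq_error_equations_ns}--\eqref{eq_error_equations_incompressible}, the very mechanism that produces the discrete energy law in Theorem \ref{theorem_unconditionally_energy_stable}: test the momentum error equation against the intermediate-velocity error, square the pressure-correction error equation, and add the two so that the velocity norms telescope and the pressure cross terms cancel. First I would take $\mathbf{v}_h=2\tau\,\Theta_{\tilde{\mathbf{u}} h}^{n+1}$ in \eqref{eq_error_equations_ns}. Splitting $e_{\tilde{\mathbf{u}}}^{n+1}-e_{\mathbf{u}}^{n}=(\Theta_{\tilde{\mathbf{u}} h}^{n+1}-\Theta_{\mathbf{u} h}^{n})+(\Phi_{\mathbf{u} h}^{n+1}-\Phi_{\mathbf{u} h}^{n})$, the discrete time-derivative term produces the polarization identity $\|\Theta_{\tilde{\mathbf{u}} h}^{n+1}\|^{2}-\|\Theta_{\mathbf{u} h}^{n}\|^{2}+\|\Theta_{\tilde{\mathbf{u}} h}^{n+1}-\Theta_{\mathbf{u} h}^{n}\|^{2}$, while the projection increment $2\tau(\delta_\tau\Phi_{\mathbf{u} h}^{n+1},\Theta_{\tilde{\mathbf{u}} h}^{n+1})$ is moved to the right and controlled by Lemma \ref{lemma_stokes_projection001}. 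The viscous term yields the coercive $2\nu\tau\|\nabla\Theta_{\tilde{\mathbf{u}} h}^{n+1}\|^{2}$ together with a residual $2\nu\tau(\nabla\Phi_{\mathbf{u} h}^{n+1},\nabla\Theta_{\tilde{\mathbf{u}} h}^{n+1})$, which I rewrite through the Stokes relation \eqref{eq_stokes_quasi_proojection_equation0001} as a pressure-projection divergence term of order $h^{r}$.

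Next, following \eqref{eq_semi_discrete_CHNS_scheme_tilde_u_u1_p_p1_rewrite}--\eqref{eq_semi_discrete_CHNS_scheme_tilde_u_u1_p_p1_rewrite_d}, I would recast the correction error equation \eqref{eq_error_equations_u_p_cor} as $\Theta_{\mathbf{u} h}^{n+1}+\tau\nabla\Theta_{ph}^{n+1}=\Theta_{\tilde{\mathbf{u}} h}^{n+1}+\tau\nabla\Theta_{ph}^{n}$ up to projection and truncation residuals, and take its squared $L^{2}$ norm. The discrete incompressibility \eqref{eq_error_equations_incompressible} together with the Stokes relation \eqref{eq_stokes_quasi_proojection_equation0002} annihilates the term $(\Theta_{\mathbf{u} h}^{n+1},\nabla\Theta_{ph}^{n+1})$, so this identity contributes $\|\Theta_{\mathbf{u} h}^{n+1}\|^{2}-\|\Theta_{\tilde{\mathbf{u}} h}^{n+1}\|^{2}+\tau^{2}(\|\nabla\Theta_{ph}^{n+1}\|^{2}-\|\nabla\Theta_{ph}^{n}\|^{2})$ plus a cross term $-2\tau(\nabla\Theta_{ph}^{n},\Theta_{\tilde{\mathbf{u}} h}^{n+1})$ that exactly cancels the pressure contribution $2\tau(\nabla e_p^{n},\Theta_{\tilde{\mathbf{u}} h}^{n+1})$ inherited from \eqref{eq_error_equations_ns} (its $\Phi_{ph}^{n}$ remainder being again $O(h^{r})$), precisely as $2\tau(\nabla p_h^{n},\tilde{\mathbf{u}}_h^{n+1})$ cancels in the energy proof. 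Adding the squared correction identity to the tested momentum equation telescopes $\|\Theta_{\tilde{\mathbf{u}} h}^{n+1}\|^{2}-\|\Theta_{\mathbf{u} h}^{n}\|^{2}$ into $\|\Theta_{\mathbf{u} h}^{n+1}\|^{2}-\|\Theta_{\mathbf{u} h}^{n}\|^{2}$ and places $\tau^{2}\|\nabla\Theta_{ph}^{m+1}\|^{2}$ on the left after summation over $n$.

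The third step is to estimate the nonlinear and coupling terms. For the convection difference I would insert $\boldsymbol{B}(\mathbf{u}_h^{n},\mathbf{u}^{n+1},\mathbf{v}_h)$ to write it as $\boldsymbol{B}(\mathbf{u}^{n+1}-\mathbf{u}_h^{n},\mathbf{u}^{n+1},\mathbf{v}_h)+\boldsymbol{B}(\mathbf{u}_h^{n},e_{\tilde{\mathbf{u}}}^{n+1},\mathbf{v}_h)$; the crucial point is that the diagonal piece $\boldsymbol{B}(\mathbf{u}_h^{n},\Theta_{\tilde{\mathbf{u}} h}^{n+1},\Theta_{\tilde{\mathbf{u}} h}^{n+1})$ vanishes by the skew-symmetry of $\boldsymbol{B}$ in its last two arguments, so that only factors carrying a velocity error, a time increment $\mathbf{u}^{n+1}-\mathbf{u}^{n}$, or the projection error $\Phi_{\mathbf{u} h}^{n+1}$ survive; these I bound with \eqref{eq_trilinear_form_b_puls}, the embeddings \eqref{eq_basic_inequalities_003} and the inverse inequalities, producing the coefficients $\|\nabla\mathbf{u}_h^{n+1}\|^{2}$ and $\|\nabla\mathbf{u}_h^{n}\|^{2}\|\mathbf{u}_h^{n}\|^{2}$ that are summable by Lemma \ref{Lemma_0301}. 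For the coupling difference I use $b(\phi,\mathbf{v},w)=(\nabla\phi\cdot\mathbf{v},w)$ and split $b(\phi_h^{n+1},\mathbf{v}_h,\mu_h^{n+1})-b(\phi^{n+1},\mathbf{v}_h,\mu^{n+1})$ into $-(\nabla e_\phi^{n+1}\cdot\mathbf{v}_h,\mu_h^{n+1})-(\nabla\phi^{n+1}\cdot\mathbf{v}_h,e_\mu^{n+1})$; the first piece generates the coefficients $\|\mu_h^{n+1}\|_{H^1}^{2}$ and $\|\mu_h^{n+1}\|_{H^1}^{2}\|\nabla\phi_h^{n+1}\|^{2}$ multiplying $\|\Theta_{\phi h}^{n+1}\|_{H^1}^{2}$ (after $\nabla e_\phi^{n+1}=\nabla\Phi_{\phi h}^{n+1}+\nabla\Theta_{\phi h}^{n+1}$ and Lemma \ref{lemma_boundness_phi_mu}), while the second, using $\|\phi^{n+1}\|_{H^2}$, supplies the $\frac{\tau}{4}\sum_{n}\|e_\mu^{n+1}\|_{H^1}^{2}$ term kept on the right so that it may later be absorbed against the gain $\tau\sum_n\|\nabla e_\mu^{n+1}\|_{H^1}^{2}$ of Lemma \ref{lemma_ephi_error}. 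The truncation pieces $R_2^{n+1}$ and $R_3^{n+1}$ and all projection remainders are handled by Lemmas \ref{lemma_truncation_errors}, \ref{lemma_stokes_projection001} and \ref{lemma_stokes_projection002}, contributing the $\tau^{2}+h^{2r}$ term.

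The main obstacle is the interplay between the pressure–velocity splitting and the fact that the intermediate velocity error $\Theta_{\tilde{\mathbf{u}} h}^{n+1}$ is \emph{not} discretely divergence-free: the skew-symmetry of $\boldsymbol{B}$ removes the otherwise uncontrollable convective diagonal, but every non-solenoidal contribution (the pressure cross terms and the divergence residuals from the Stokes projection) must be routed either into the telescoping squared-correction identity or into an $O(h^{r})$ projection bound, and the cancellation of the pressure term hinges delicately on \eqref{eq_error_equations_incompressible} and the correction equation matching exactly as in the energy proof. Once all right-hand terms are organized, I would sum from $n=0$ to $m$, note that the self-referential factors $\|\Theta_{\tilde{\mathbf{u}} h}^{n+1}\|^{2}$ and $\|\Theta_{\mathbf{u} h}^{n}\|^{2}$ carry coefficients that are summable by Lemmas \ref{Lemma_0301} and \ref{lemma_boundness_phi_mu}, and close the estimate with the discrete Grönwall inequality (Lemma \ref{lemma_discrete_Gronwall_inequation}), which absorbs these self-terms into the constant $C$ and leaves exactly the stated bound in terms of $\|\Theta_{\phi h}^{n+1}\|_{H^1}^{2}$ and $\|e_\mu^{n+1}\|_{H^1}^{2}$.
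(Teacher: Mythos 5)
Your proposal is correct and follows essentially the same route as the paper: testing the momentum error equation with $2\tau\Theta_{\tilde{\mathbf{u}} h}^{n+1}$, squaring the recast pressure-correction error identity so the velocity norms telescope and the pressure cross terms cancel via discrete incompressibility, splitting the convective and capillary differences into error, projection, and time-increment pieces, and closing with the boundedness lemmas and discrete Gr\"onwall inequality. The only cosmetic difference is your convection splitting (invoking skew-symmetry of $\boldsymbol{B}$ to drop the diagonal) versus the paper's Taylor-expansion decomposition, which instead keeps the $\|\Theta_{\tilde{\mathbf{u}} h}^{n+1}\|^2$ self-term and absorbs it by Gr\"onwall; both are standard and lead to the same bound.
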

\begin{proof}
	Recasting the projection step \eqref{eq_error_equations_u_p_cor}, we can obtain
	\begin{equation}
		\label{eq_recast_error_equation_projection_step}
		\Theta_{\mathbf{u} h}^{n+1}+\tau\nabla\Theta_{p h}^{n+1}=\Theta_{\tilde{\mathbf{u}} h}^{n+1}
			+\tau\nabla\Theta_{p h}^n+\tau\left(R_3^{n+1}-\tau\delta_{\tau}\nabla\Phi_{p h}^{n+1}\right).
	\end{equation}
Taking the inner product of \eqref{eq_recast_error_equation_projection_step} with itself on both sides, we can derive
\begin{equation}
	\label{eq_recast_error_equation_projection_step_squaring}
	\begin{aligned}
		\|\Theta_{\mathbf{u} h}^{n+1}\|^2+\tau^2\|\nabla\Theta_{p h}^{n+1}\|^2=&~\|\Theta_{\tilde{\mathbf{u}} h}^{n+1}\|^2+\tau^2\|\nabla\Theta_{p h}^n\|^2
			+\tau^2\|R_3^{n+1}-\tau\delta_{\tau}\nabla\Phi_{p h}^{n+1}\|^2\\
			&+2\tau\left(\Theta_{\tilde{\mathbf{u}} h}^{n+1},\nabla\Theta_{p h}^n\right)
			+2\tau\left(\Theta_{\tilde{\mathbf{u}} h}^{n+1},R_3^{n+1}-\tau\delta_{\tau}\nabla\Phi_{p h}^{n+1}\right)\\
			&+2\tau^2\left(\nabla\Theta_{p h}^n,R_3^{n+1}-\tau\delta_{\tau}\nabla\Phi_{p h}^{n+1}\right).
	\end{aligned}
\end{equation}
Testing $\mathbf{v}_h=2\tau\Theta_{\tilde{\mathbf{u}} h}^{n+1}$ in \eqref{eq_error_equations_ns}, and combining the \eqref{eq_recast_error_equation_projection_step_squaring}
and then using the Stokes projection \eqref{eq_stokes_quasi_proojection_equation0001}, we obtain
\begin{equation}
	\label{eq_testing_combining_using_the_above_equations}
	\begin{aligned}
		&\|\Theta_{\mathbf{u} h}^{n+1}\|^2-\|\Theta_{\mathbf{u} h}^n\|^2+\|\Theta_{\tilde{\mathbf{u}} h}^{n+1}-\Theta_{\mathbf{u} h}^n\|^2
			+\tau^2\left(\|\Theta_{p h}^{n+1}\|^2-\|\Theta_{p h}^n\|^2\right)+2\tau\|\nabla\Theta_{\tilde{\mathbf{u}} h}^{n+1}\|^2\\
		=&2\tau\left(\delta_{\tau}\Phi_{\mathbf{u} h}^{n+1},\Theta_{\tilde{\mathbf{u}} h}^{n+1}\right)
			+2\tau^2\left(\nabla\Theta_{p h}^{n+1},R_3^{n+1}-\tau\delta_{\tau}\nabla\Phi_{p h}^{n+1}\right)
			+2\tau\left(\Theta_{\tilde{\mathbf{u}} h}^{n+1},\nabla\left(\Phi_{p h}^{n+1}-\Phi_{p h}^{n}\right)\right)\\
			&+2\tau\left(\Theta_{\tilde{\mathbf{u}} h}^{n+1},R_3^{n+1}-\tau\delta_{\tau}\nabla\Phi_{p h}^{n+1}\right)
			+\tau^2\|R_3^{n+1}-\tau\delta_{\tau}\nabla\Phi_{p h}^{n+1}\|^2+2\tau\left(R_2^{n+1},\Theta_{\tilde{\mathbf{u}} h}^{n+1}\right)\\
			&+2\tau\left(\mu^{n+1}\nabla\phi^{n+1}-\mu_h^{n+1}\nabla\phi_h^{n+1},\Theta_{\tilde{\mathbf{u}} h}^{n+1}\right)
				-2\tau\left(\mathbf{u}^{n+1}\cdot\nabla\mathbf{u}^{n+1}-\mathbf{u}_h^n\cdot\nabla\tilde{\mathbf{u}}_h^{n+1},\Theta_{\tilde{\mathbf{u}} h}^{n+1}\right).
	\end{aligned}
\end{equation}
Now, we need to estimate each term on the right-hand sides of \eqref{eq_testing_combining_using_the_above_equations}.
\begin{equation}
	\label{eq_error_estimates_right_term0001}
	\begin{aligned}
		\big|2\tau\left(\delta_{\tau}\Phi_{\mathbf{u} h}^{n+1},\Theta_{\tilde{\mathbf{u}} h}^{n+1}\right)\big|
		\leq \frac{\tau}{8}\|\nabla\Theta_{\tilde{\mathbf{u}} h}^{n+1}\|^2+C\tau\|\delta_{\tau}\Phi_{\mathbf{u} h}^{n+1}\|^2,
	\end{aligned}
\end{equation}
\begin{equation}
	\label{eq_error_estimates_right_term0002}
	\begin{aligned}
		&\bigg|2\tau^2\left(\nabla\Theta_{p h}^{n+1},R_3^{n+1}-\tau\delta_{\tau}\nabla\Phi_{p h}^{n+1}\right)+2\tau\left(\Theta_{\tilde{\mathbf{u}} h}^{n+1},R_3^{n+1}-\tau\delta_{\tau}\nabla\Phi_{p h}^{n+1}\right)\bigg|\\
		\leq &~C\tau^3\|\nabla\Theta_{p h}^{n+1}\|^2+C\tau\|R_3^{n+1}-\tau\delta_{\tau}\nabla\Phi_{p h}^{n+1}\|^2+\frac{\tau}{8}\|\nabla\Theta_{\tilde{\mathbf{u}} h}^{n+1}\|^2\\
		\leq&~C\tau^3\|\nabla\Theta_{p h}^{n+1}\|^2+\frac{\tau}{8}\|\nabla\Theta_{\tilde{\mathbf{u}} h}^{n+1}\|^2+C\tau\left(\|R_3^{n+1}\|^2+\tau^2\|\nabla\delta_{\tau}\Phi_{p h}^{n+1}\|^2\right),
	\end{aligned}
\end{equation}
\begin{equation}
	\label{eq_error_estimates_right_term0003}
	\begin{aligned}
		\bigg|2\tau\left(\Theta_{\tilde{\mathbf{u}} h}^{n+1},\nabla\left(\Phi_{p h}^{n+1}-\Phi_{p h}^{n}\right)\right)\bigg|
		=&~\bigg|2\tau^2\left(\delta_{\tau}\Phi_{p h}^{n+1},\nabla\cdot\Theta_{\tilde{\mathbf{u}} h}^{n+1}\right)\bigg|
		\leq C\tau^3\|\delta_{\tau}\Phi_{p h}^{n+1}\|^2+\frac{\tau}{8}\|\nabla\Theta_{\tilde{\mathbf{u}} h}^{n+1}\|^2,
	\end{aligned}
\end{equation}
\begin{equation}
	\label{eq_error_estimates_right_term0004}
	\begin{aligned}
		\bigg|\tau^2\|R_3^{n+1}-\tau\delta_{\tau}\nabla\Phi_{p h}^{n+1}\|^2\bigg|
		\leq\tau^2\|R_3^{n+1}\|^2+C\tau^4\|\delta_{\tau}\nabla\Phi_{p h}^{n+1}\|^2,
	\end{aligned}
\end{equation}
\begin{equation}
	\bigg|2\tau\left(R_2^{n+1},\Theta_{\tilde{\mathbf{u}} h}^{n+1}\right)\bigg|
	\leq C\tau\|R_2^{n+1}\|^2+\frac{\tau}{8}\|\nabla\Theta_{\tilde{\mathbf{u}} h}^{n+1}\|^2,
\end{equation}
\begin{equation}
	\label{eq_error_estimates_right_term0005}
	\begin{aligned}
		&\bigg|2\tau\left(\mu^{n+1}\nabla\phi^{n+1}-\mu_h^{n+1}\nabla\phi_h^{n+1},\Theta_{\tilde{\mathbf{u}} h}^{n+1}\right)\bigg|
		=2\tau\bigg|\left(e_{\mu}^{n+1}\nabla\phi^{n+1}+\mu_h^{n+1}\nabla e_{\phi}^{n+1},\Theta_{\tilde{\mathbf{u}} h}^{n+1}\right)\bigg|\\
		\leq&~2\tau\bigg|\left(e_{\mu}\nabla\phi^{n+1},\Theta_{\mathbf{u} h}^{n+1}+\tau\nabla\left(\Theta_{p h}^{n+1}-\Theta_{p h}^n\right)
			-\tau\left(R_3^{n+1}-\tau\delta_{\tau}\nabla\Phi_{p h}^{n+1}\right)\right)\bigg|\\
			&+2\tau\bigg|\left(\mu_h^{n+1}\nabla e_{\phi}^{n+1},\Theta_{\tilde{\mathbf{u}} h}^{n+1}\right)\bigg|\\
		\leq &~2\tau\|e_{\mu}^{n+1}\|_{L^4}\|\nabla\phi^{n+1}\|_{L^4}\left(\|\Theta_{\mathbf{u} h}^{n+1}\|+\tau\left(\|\nabla\Theta_{p h}^{n+1}\|+\|\nabla\Theta_{p h}^n\|\right)
		+\tau\|R_3^{n+1}-\tau\delta_{\tau}\nabla\Phi_{p h}^{n+1}\|\right)\\
		&+2\tau\|\mu_h^{n+1}\|_{L^4}\|\nabla e_{\phi}^{n+1}\|\|\Theta_{\tilde{\mathbf{u}} h}^{n+1}\|_{L^4}\\
		\leq &~\frac{\tau}{8}\|\nabla\Theta_{\tilde{\mathbf{u}} h}^{n+1}\|^2+\frac{\tau}{4}\|e_\mu^{n+1}\|_{H^1}^2\\
		&+C\tau^3\|\phi^{n+1}\|_{H^2}^2
			\left(\|\nabla\Theta_{p h}^{n+1}\|^2+\|\nabla\Theta_{p h}^n\|^2+\|R_3^{n+1}-\tau\delta_{\tau}\nabla\Phi_{p h}^{n+1}\|^2\right)\\
			&+C\tau\left(\|\mu_h^{n+1}\|_{H^1}^2+\|\phi^{n+1}\|_{H^2}^2\right)\left(\|\nabla\Theta_{\phi h}^{n+1}\|^2+\|\nabla \Phi_{\phi h}^{n+1}\|^2+\|\nabla\Theta_{\mathbf{u} h}^{n+1}\|^2\right),
	\end{aligned}
\end{equation}
and
\begin{equation}
	\label{eq_error_estimates_right_term0006}
	\begin{aligned}
		&\bigg|-2\tau\left(\mathbf{u}^{n+1}\cdot\nabla\mathbf{u}^{n+1}-\mathbf{u}_h^n\cdot\nabla\tilde{\mathbf{u}}_h^{n+1},\Theta_{\tilde{\mathbf{u}} h}^{n+1}\right)\bigg|\\
		=&~2\tau\bigg|\left(\mathbf{u}^{n+1}\cdot\nabla\mathbf{u}^{n+1}-\mathbf{u}^n\cdot\nabla\mathbf{u}^n
			+e_{\mathbf{u}}^n\cdot\nabla\mathbf{u}^n+\mathbf{u}_h^n\cdot\nabla e_{\tilde{\mathbf{u}}}^{n+1}+\mathbf{u}_h^n\cdot\nabla\left(\mathbf{u}^n-\mathbf{u}^{n+1}\right),\Theta_{\tilde{\mathbf{u}} h}^{n+1}\right)\bigg|.
	\end{aligned}
\end{equation}
According to the Taylor expansion and the inequatilies \eqref{eq_basic_inequalities_001}-\eqref{eq_basic_inequalities_004}, we obtain
\begin{equation}
	\label{eq_error_estimates_right_term0007}
	\begin{aligned}
		&\bigg|2\tau\left(\mathbf{u}^{n+1}\cdot\nabla\mathbf{u}^{n+1}-\mathbf{u}^n\cdot\nabla\mathbf{u}^n,\Theta_{\tilde{\mathbf{u}} h}^{n+1}\right)\bigg|\\
		= &~2\tau\bigg|\left(\left(\mathbf{u}^{n+1}-\mathbf{u}^n\right)\cdot\nabla\mathbf{u}^{n+1}+\mathbf{u}^n\cdot\nabla\left(\mathbf{u}^{n+1}-\mathbf{u}^n\right),\Theta_{\tilde{\mathbf{u}} h}^{n+1}\right)\bigg|\\
		\leq&~\frac{\tau}{8}\|\nabla\Theta_{\tilde{\mathbf{u}} h}^{n+1}\|^2+C\tau^3\|\mathbf{u}_t\|_{L^{\infty}(0,T;\mathbf{H}^1(\Omega))}^2\left(\|A_h\mathbf{u}^{n+1}\|^2+\|A_h\mathbf{u}^{n}\|^2\right),
	\end{aligned}
\end{equation}
and
\begin{equation}
	\label{eq_error_estimates_right_term0008}
	\begin{aligned}
		&\bigg|2\tau\left(e_{\mathbf{u}}^n\cdot\nabla\mathbf{u}^n+\mathbf{u}_h^n\cdot\nabla e_{\tilde{\mathbf{u}}}^{n+1},\Theta_{\tilde{\mathbf{u}} h}^{n+1}\right)\bigg|\\
		\leq&~2\tau\|\nabla\Theta_{\tilde{\mathbf{u}} h}^{n+1}\|\left(\|e_{\mathbf{u}}^n\|_{L^4}\|\nabla\mathbf{u}^n\|_{L^4} +\|\nabla\mathbf{u}_h^n\|_{L^4}\|e_{\tilde{\mathbf{u}}}^{n+1}\|_{L^4}\right)\\
		\leq&~\frac{\tau}{8}\|\nabla\Theta_{\tilde{\mathbf{u}} h}^{n+1}\|^2+C\tau\left(\|\nabla\mathbf{u}^n\|^2+\|\nabla\mathbf{u}_h^n\|^2 \right)\left(\|\Phi_{\mathbf{u} h}^n\|^2+\|\Theta_{\mathbf{u} h}^n\|^2
				+\|\Phi_{\mathbf{u} h}^{n+1}\|^2+\|\Theta_{\tilde{\mathbf{u}} h}^{n+1}\|^2\right),
	\end{aligned}
\end{equation}
and
\begin{equation}
	\label{eq_error_estimates_right_term0009}
	\begin{aligned}
		&\bigg|2\tau\left(\mathbf{u}_h^n\cdot\nabla\left(\mathbf{u}^n-\mathbf{u}^{n+1}\right),\Theta_{\tilde{\mathbf{u}} h}^{n+1}\right)\bigg|
		\leq 2\tau\|\mathbf{u}_h^n\|_{L^4}\|\nabla\left(\mathbf{u}^n-\mathbf{u}^{n+1}\right)\|_{L^4}\|\nabla\Theta_{\tilde{\mathbf{u}} h}^{n+1}\|\\
		\leq&~\frac{\tau}{8}\|\nabla\Theta_{\tilde{\mathbf{u}} h}^{n+1}\|^2+C\tau\|\mathbf{u}_h^n\|^2\left(\|\nabla\mathbf{u}^{n+1}\|^2+\|\nabla\mathbf{u}^n\|^2\right).
	\end{aligned}
\end{equation}
Combining the above inequalities \eqref{eq_error_estimates_right_term0001}-\eqref{eq_error_estimates_right_term0009} with \eqref{eq_testing_combining_using_the_above_equations},
and summing up for $n$ from $0$ to $m$, we obtain
\begin{equation}
	\begin{aligned}
		&\|\Theta_{\mathbf{u} h}^{m+1}\|^2
		+\tau^2\|\Theta_{p h}^{m+1}\|^2+\tau\sum_{n=0}^{m}\|\nabla\Theta_{\tilde{\mathbf{u}} h}^{n+1}\|^2\\
		\leq &~\|\Theta_{\mathbf{u} h}^{0}\|^2+\tau^2\|\Theta_{p h}^{0}\|^2
			+C\tau\sum_{n=0}^{m}\left(\|\delta_{\tau}\Phi_{\mathbf{u} h}^{n+1}\|^2+\|R_2^{n+1}\|^2+\|R_3^{n+1}\|^2\right)\\
			&+\frac{\tau}{}\sum_{n=0}^{m}\|e_\mu^{n+1}\|_{H^1}^2+C\tau^3\sum_{n=0}^{m}\left(\|\nabla\Theta_{p h}^{n+1}\|^2+\|\nabla\delta_{\tau}\Phi_{p h}^{n+1}\|^2
				+\|\delta_{\tau}\Phi_{p h}^{n+1}\|^2 \right)\\
			&+C\tau\sum_{n=0}^{m}\left(\|\mu_h^{n+1}\|_{H^1}^2+\|\phi^{n+1}\|_{H^2}^2\right)\left(\|\nabla\Theta_{\phi h}^{n+1}\|^2+\|\Phi_{\phi h}^{n+1}\|^2+\|\nabla\Theta_{\mathbf{u} h}^{n+1}\|^2\right)\\
			&+C\tau\sum_{n=0}^{m}\left(\|\nabla\mathbf{u}^n\|^2+\|\nabla\mathbf{u}_h^n\|^2\right)\left(\|\Phi_{\mathbf{u} h}^n\|^2+\|\Theta_{\mathbf{u} h}^n\|^2
				+\|\Phi_{\mathbf{u} h}^{n+1}\|^2+\|\Theta_{\tilde{\mathbf{u}} h}^{n+1}\|^2\right)\\
			&+C\tau\sum_{n=0}^{m}\|\mathbf{u}_h^n\|^2\left(\|\nabla\mathbf{u}^{n+1}\|^2+\|\nabla\mathbf{u}^n\|^2\right)\\
			&+C\tau^3\sum_{n=0}^{m}\|\phi^{n+1}\|_{H^2}^2\left(\|\nabla\Theta_{p h}^{n+1}\|^2+\|\nabla\Theta_{p h}^n\|^2+\|\mathbf{u}_t\|_{L^\infty(0,T;\mathbf{H}^1(\Omega))}^2\left(\|A_h\mathbf{u}^{n+1}\|^2+\|A_h\mathbf{u}^{n}\|^2\right)\right).
	\end{aligned}
\end{equation}
According to the Lemma \ref{lemma_stokes_projection001} and \ref{lemma_stokes_projection002}, we can derive
\begin{equation}
	\tau^3\sum_{n=0}^{m}\|\delta_{\tau}\Phi_{p h}^{n+1}\|_{H^1}^2\leq C\tau^2\int_{0}^{T}\left(\|\mathbf{u}_t\|_{H^2}^2+\|p_t\|_{H^1}^2\right)dt,
\end{equation}
and
\begin{equation}
	\tau\sum_{n=0}^{m}\|\delta_{\tau}\Phi_{\mathbf{u} h}^{n+1}\|^2\leq Ch^{2r}\int_{0}^{T}\left(\|\mathbf{u}_t\|_{H^{r+1}}^2+\|p_t\|_{H^r}^2\right)dt.
\end{equation}
Finally, using the Lemma \ref{lemma_stokes_projection001}, \ref{Lemma_0301}, \ref{lemma_boundness_phi_mu}, \ref{lemma_truncation_errors}, 
 the inequalities \eqref{eq_psi_Rhpsi_Ls_norm_inequation}-\eqref{eq_Dtau_psi_Rhpsi_Hneg1_norm_inequation} and the discrete Gr\"{o}nwall's lemma \ref{lemma_discrete_Gronwall_inequation},
  we can ontain the proof of this lemma.
\end{proof}
Therefore, we can obtain the following theorem based on the above lemmas.
\begin{Theorem}\label{theorem_e_phi_L2}
	Under the assumption of regularity \eqref{eq_varibles_satisfied_regularities}, for all $m\geq 0$, there exists a positive constant that does not depend on $\tau$ and $h$ such that
	\begin{equation}
		\|e_{\phi}^{m+1}\|_{H^1}^2+\|e_{\mathbf{u}}^{m+1}\|^2+\tau^2\|\nabla e_{p}^{m+1}\|^2+\tau\sum_{n=0}^{m}\left(\|e_{\mu}^{n+1}\|_{H^1}^2+\|\nabla e_{\tilde{\mathbf{u}}}^{n+1}\|^2\right)\leq C\left(\tau^2+h^{2r}\right).
	\end{equation}
\end{Theorem}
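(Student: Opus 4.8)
The plan is to combine the two preceding lemmas, whose right-hand sides are coupled exactly so that each controls the quantity appearing on the other's left-hand side. Introduce the aggregated error
\[
\mathcal{G}^{n+1}=\|\Theta_{\phi h}^{n+1}\|_{H^1}^2+\|\Theta_{\mathbf{u} h}^{n+1}\|^2+\tau^2\|\nabla\Theta_{p h}^{n+1}\|^2,
\]
and add the estimate of Lemma \ref{lemma_ephi_error} to that of Lemma \ref{lemma_u_p_L2_norm_boundness}. On the left this produces $\mathcal{G}^{m+1}$ together with the dissipation contributions $\tfrac{\tau}{2}\sum\|e_\mu^{n+1}\|_{H^1}^2$ (from Lemma \ref{lemma_ephi_error}) and $\tau\sum\|\nabla\Theta_{\tilde{\mathbf{u}} h}^{n+1}\|^2$ (from Lemma \ref{lemma_u_p_L2_norm_boundness}); on the right it produces the leading data term $C(\tau^2+h^{2r})$, the cross term $\tfrac{\tau}{4}\sum\|e_\mu^{n+1}\|_{H^1}^2$, the weighted sum of $\|\Theta_{\mathbf{u} h}^{n+1}\|^2$ from Lemma \ref{lemma_ephi_error}, the weighted sum of $\|\Theta_{\phi h}^{n+1}\|_{H^1}^2$ from Lemma \ref{lemma_u_p_L2_norm_boundness}, and a cubic pressure term $C\tau^3\sum\|\nabla\Theta_{p h}^{n+1}\|^2$.

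First I would absorb the $\tfrac{\tau}{4}\sum\|e_\mu^{n+1}\|_{H^1}^2$ on the right into the $\tfrac{\tau}{2}\sum\|e_\mu^{n+1}\|_{H^1}^2$ on the left, retaining a positive multiple of $\tau\sum\|e_\mu^{n+1}\|_{H^1}^2$. The cubic term equals $C\tau^2\cdot\tau\sum\|\nabla\Theta_{p h}^{n+1}\|^2$, which for small $\tau$ has the same Gr\"{o}nwall form as $\tau\sum\mathcal{G}^{n+1}$, so it is kept on the right with the other weighted sums. The resulting inequality has the shape
\[
\mathcal{G}^{m+1}+\frac{\tau}{4}\sum_{n=0}^m\|e_\mu^{n+1}\|_{H^1}^2+\tau\sum_{n=0}^m\|\nabla\Theta_{\tilde{\mathbf{u}} h}^{n+1}\|^2\le C(\tau^2+h^{2r})+C\tau\sum_{n=0}^m d_n\,\mathcal{G}^{n+1},
\]
where $d_n$ collects $\|\phi^{n+1}\|_{H^2}^2$, $\|\mu_h^{n+1}\|_{H^1}^2$, $\|\nabla\mathbf{u}_h^n\|^2$, $\|\mathbf{u}_h^n\|^2$ and their products. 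The crucial verification is that $\tau\sum_{n=0}^m d_n\le C$ uniformly: the exact-solution factors are bounded pointwise by the regularity \eqref{eq_varibles_satisfied_regularities}, while the discrete factors are $\tau$-summable by the a priori bounds of Lemma \ref{Lemma_0301} and Lemma \ref{lemma_boundness_phi_mu}. Under a mild restriction $\tau\le\tau_0$ guaranteeing $\tau d_n<1$, the discrete Gr\"{o}nwall inequality (Lemma \ref{lemma_discrete_Gronwall_inequation}) then yields $\mathcal{G}^{m+1}+\tau\sum(\cdots)\le C(\tau^2+h^{2r})$, using also $\mathcal{G}^0=O(h^{2r})$ since $\Theta_{\phi h}^0=0$ and $\Theta_{\mathbf{u} h}^0,\Theta_{p h}^0$ are projection differences of the initial data.

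Finally I would pass from the projected errors to the full errors by the triangle inequality, writing $e_\phi=\Phi_{\phi h}+\Theta_{\phi h}$, $e_\mathbf{u}=\Phi_{\mathbf{u} h}+\Theta_{\mathbf{u} h}$, $e_{\tilde{\mathbf{u}}}=\Phi_{\mathbf{u} h}+\Theta_{\tilde{\mathbf{u}} h}$ and $e_p=\Phi_{p h}+\Theta_{p h}$, and bounding the interpolation parts $\Phi$ by the Ritz estimate \eqref{eq_psi_Rhpsi_Ls_norm_inequation} and the Stokes estimate of Lemma \ref{lemma_stokes_projection001}; each $\Phi$ contributes at most $Ch^{r}$, which is dominated by the $\tau+h^{r}$ bound already obtained for the $\Theta$ parts, so the stated rates follow. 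I expect the main obstacle to be the bookkeeping in the Gr\"{o}nwall step, namely confirming that every coefficient $d_n$ is genuinely $\tau$-summable, in particular the discrete gradient factor $\|\nabla\mathbf{u}_h^n\|^2$ and the factor $\|\mu_h^{n+1}\|_{H^1}^2$, which must be controlled through Lemma \ref{Lemma_0301} and Lemma \ref{lemma_boundness_phi_mu} (and the projection relation between $\mathbf{u}_h^{n+1}$ and $\tilde{\mathbf{u}}_h^{n+1}$) rather than through the regularity of the exact solution, while ensuring the $\tau^3$-weighted pressure-gradient sum is absorbed without degrading the optimal rate; the conversion from $\Theta$ to $e$ and the initial data term are routine once the Gr\"{o}nwall estimate is secured.
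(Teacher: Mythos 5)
Your proposal is correct and follows essentially the same route as the paper, which obtains this theorem directly by adding Lemma \ref{lemma_ephi_error} to Lemma \ref{lemma_u_p_L2_norm_boundness}, absorbing the $\frac{\tau}{4}\sum_{n}\|e_{\mu}^{n+1}\|_{H^1}^2$ cross term into the left-hand side, applying the discrete Gr\"{o}nwall inequality (Lemma \ref{lemma_discrete_Gronwall_inequation}) with coefficients made $\tau$-summable by Lemmas \ref{Lemma_0301} and \ref{lemma_boundness_phi_mu}, and passing from the $\Theta$-errors to the full errors via the triangle inequality together with the Ritz and Stokes projection estimates. The bookkeeping points you flag (the $\tau$-summability of $\|\nabla\mathbf{u}_h^n\|^2$ and $\|\mu_h^{n+1}\|_{H^1}^2$, and the relation $\mathbf{u}_h^{n+1}=\boldsymbol{I}_h\tilde{\mathbf{u}}_h^{n+1}$) are exactly the facts the paper's argument relies on.
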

Next, in order to prove the optimal $L^2$ error estimates for the numerical solutions of the fully discrete scheme 
\eqref{eq_fully_discrete_CHNS_scheme_phi}-\eqref{eq_fully_discrete_CHNS_scheme_tilde_incompressible_condition},
we need to prove the following lemma.
\begin{Lemma}\label{lemma_L2_e_phi_u_mu}
	Under the assumption of regularity \eqref{eq_varibles_satisfied_regularities}, for all $m\geq 0$, there exists a positive constant that does not depend on $\tau$ and $h$ such that
	\begin{equation}
		\|e_\phi^{m+1}\|^2+\sum_{n=0}^{m}\|e_{\mu}^{n+1}\|^2\leq C\left(\tau^2+h^{2(r+1)}\right),
	\end{equation}
and
\begin{equation}
	\|e_{\mathbf{u}}^{m+1}\|^2\leq C\left(\tau^2+h^{2(r+1)}\right).
\end{equation}
\end{Lemma}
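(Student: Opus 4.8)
The plan is to upgrade the one-power-of-$h$ suboptimal bounds of Theorem~\ref{theorem_e_phi_L2} to the optimal $L^2$ rates by a discrete negative-norm (duality) argument. The loss in Theorem~\ref{theorem_e_phi_L2} comes from the fact that the Cahn--Hilliard energy estimate only controls the gradient $\|\nabla\Theta_{\phi h}\|$, and the Ritz projection gradient error $\|\nabla\Phi_{\phi h}\|=O(h^r)$ is one order short. Since the projection splittings already give $\|\Phi_{\phi h}^{n+1}\|+\|\Phi_{\mathbf{u} h}^{n+1}\|\le Ch^{r+1}$ (optimal $L^2$), it suffices to prove $\|\Theta_{\phi h}^{m+1}\|\le C(\tau+h^{r+1})$, the companion bound for $\tau\sum\|e_\mu^{n+1}\|^2$, and $\|\Theta_{\mathbf{u} h}^{m+1}\|\le C(\tau+h^{r+1})$; these must be closed together because the phase and velocity error equations are coupled.

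For the phase field I would test the error equation~\eqref{eq_error_equations_phi} with $w_h=2\tau(-\Delta_h)^{-1}\Theta_{\phi h}^{n+1}$ and the chemical-potential error equation~\eqref{eq_error_equations_mu} with $\varphi_h=2\tau\Theta_{\phi h}^{n+1}$. Using Ritz orthogonality~\eqref{eq_Ritz_projection} and the definition~\eqref{eq_discrete_Laplacian_operator11} of $(-\Delta_h)^{-1}$, the diffusion term $M(\nabla e_\mu^{n+1},\nabla w_h)$ collapses to $2\tau M(\Theta_{\mu h}^{n+1},\Theta_{\phi h}^{n+1})$, which is precisely the quantity produced by the second test; adding the two relations eliminates this coupling and leaves the structure
\begin{equation*}
	\tfrac{1}{2}\delta_\tau\|\Theta_{\phi h}^{n+1}\|_{H^{-1}}^2+M\lambda\|\nabla\Theta_{\phi h}^{n+1}\|^2\le \mathcal{R}^{n+1},
\end{equation*}
where $\mathcal{R}^{n+1}$ gathers the remaining terms. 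The essential gain is that every contribution is now paired against the $H^{-1}$-level object $(-\Delta_h)^{-1}\Theta_{\phi h}^{n+1}$ or against $\Theta_{\phi h}^{n+1}$ in $L^2$, so the projection errors enter in negative norms and pick up the extra order via \eqref{eq_psi_Rhpsi_Hneg1_norm_inequation} and \eqref{eq_Dtau_psi_Rhpsi_Hneg1_norm_inequation}; the truncation terms are $O(\tau)$ by Lemma~\ref{lemma_truncation_errors}; the cubic difference, tested only against $\Theta_{\phi h}^{n+1}$ in $L^2$, is controlled by $\|(\phi^{n+1})^3-(\phi_h^{n+1})^3\|\le C\|e_\phi^{n+1}\|$ (cf.\ \eqref{eq_phi3_error} and Lemma~\ref{Lemma_0403}) and hence stays optimal; and the convection difference, split as $\nabla\phi^{n+1}\!\cdot e_{\mathbf{u}}^n+\nabla e_\phi^{n+1}\!\cdot\mathbf{u}_h^n$, is handled by integrating by parts to move the derivative off $e_\phi$ and using the uniform bound $\|\mathbf{u}_h^n\|_{L^\infty}\le C$ from \eqref{eq_Linfty_u_boundedness}, so that $e_\phi^{n+1}$ appears in $L^2$ rather than through $\nabla e_\phi^{n+1}$. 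Summing in $n$, invoking the interpolation inequality~\eqref{eq_operators_estimates_neg_3} to recover $\|\Theta_{\phi h}^{n+1}\|^2\le\varepsilon\|\nabla\Theta_{\phi h}^{n+1}\|^2+C\varepsilon^{-1}\|\Theta_{\phi h}^{n+1}\|_{H^{-1}}^2$ (with $\varepsilon$ chosen to absorb the gradient into the dissipation), and applying the discrete Gr\"onwall inequality (Lemma~\ref{lemma_discrete_Gronwall_inequation}) yields $\|e_\phi^{m+1}\|\le C(\tau+h^{r+1})$; a further test of \eqref{eq_error_equations_mu} with $\Theta_{\mu h}^{n+1}$, fed with this improved $\Theta_{\phi h}$ control, gives the optimal $\tau\sum\|e_\mu^{n+1}\|^2$.

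For the velocity I would rerun the argument of Lemma~\ref{lemma_u_p_L2_norm_boundness}, testing \eqref{eq_error_equations_ns} with $2\tau\Theta_{\tilde{\mathbf{u}}h}^{n+1}$ and combining with the projection step, but now insert the sharpened phase-field estimate. The only place the earlier analysis lost an order is the coupling $2\tau(\mu^{n+1}\nabla\phi^{n+1}-\mu_h^{n+1}\nabla\phi_h^{n+1},\Theta_{\tilde{\mathbf{u}}h}^{n+1})$, where bounding $\|e_\mu^{n+1}\|_{L^4}\le C\|e_\mu^{n+1}\|_{H^1}=O(h^r)$ was forced. I would instead split this as $e_\mu^{n+1}\nabla\phi^{n+1}+\mu_h^{n+1}\nabla e_\phi^{n+1}$ and keep both error factors in their optimal $L^2$ norms: the first via $\|\nabla\phi^{n+1}\|_{L^\infty}\|e_\mu^{n+1}\|$ together with Poincar\'e on $\Theta_{\tilde{\mathbf{u}}h}^{n+1}$, and the second via integration by parts (paying only $\nabla\mu_h^{n+1}$, bounded in $\ell^2$ through Lemma~\ref{lemma_boundness_phi_mu}, and $\nabla\cdot\Theta_{\tilde{\mathbf{u}}h}^{n+1}$, absorbed into the dissipation), so that $e_\phi^{n+1}$ enters in $L^2$. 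With the optimal Stokes-projection bounds (Lemma~\ref{lemma_stokes_projection001}) and the optimal phase bounds already in hand, every contribution is $O(\tau+h^{r+1})$, and a joint discrete Gr\"onwall over the combined phase--velocity estimate closes $\|e_{\mathbf{u}}^{m+1}\|\le C(\tau+h^{r+1})$.

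I expect the main obstacle to be precisely this gain of one full power of $h$ in the phase-field estimate: because the scheme's dissipation only delivers $\|\nabla\Theta_{\phi h}\|$, the whole argument hinges on driving the estimate down to the $H^{-1}$ level through $(-\Delta_h)^{-1}$ while keeping the nonlinear cubic term and the convective term compatible with that negative-norm pairing---in particular controlling $\|\delta_\tau\Theta_{\phi h}^{n+1}\|_{H^{-1}}$ (as in \eqref{eq_Delta_Theta_phih_n001}) without ever reintroducing a gradient of a projection error. The secondary difficulty is the coupled structure: the convective term couples $\|\Theta_{\phi h}\|_{H^{-1}}$ to $\|e_{\mathbf{u}}\|$ and the phase--velocity coupling couples $\|\Theta_{\tilde{\mathbf{u}}h}\|$ back to $\|e_\phi\|$ and $\|e_\mu\|$, so the two optimal bounds cannot be established in isolation and must be summed and treated in a single Gr\"onwall step.
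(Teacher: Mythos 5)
Your overall strategy (gain a power of $h$ by a discrete negative-norm duality, treat phase and velocity jointly in one Gr\"onwall step) is the right philosophy, but the specific duality you chose proves the wrong pair of norms, and this gap cannot be repaired by interpolation. Testing \eqref{eq_error_equations_phi} with $2\tau(-\Delta_h)^{-1}\Theta_{\phi h}^{n+1}$ and \eqref{eq_error_equations_mu} with $2\tau\Theta_{\phi h}^{n+1}$ indeed cancels the cross term $(\Theta_{\mu h}^{n+1},\Theta_{\phi h}^{n+1})$ and yields
\begin{equation*}
	\tfrac{1}{2}\delta_\tau\|\Theta_{\phi h}^{n+1}\|_{H^{-1}}^2+c\,\|\nabla\Theta_{\phi h}^{n+1}\|^2\le \mathcal{R}^{n+1},
\end{equation*}
i.e.\ optimal control of $\Theta_{\phi h}$ in $\ell^\infty(H^{-1})$ and of its gradient in $\ell^2(H^1)$ only. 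The lemma, however, asserts $\ell^\infty(L^2)$ control of $e_\phi$ and $\ell^2(L^2)$ control of $e_\mu$. The inequality \eqref{eq_operators_estimates_neg_3} gives $\|\Theta_{\phi h}^{m+1}\|^2\le\|\Theta_{\phi h}^{m+1}\|_{H^{-1}}\,\|\nabla\Theta_{\phi h}^{m+1}\|$, and the gradient factor is at best $O(\tau+h^{r})$ pointwise in time (Theorem \ref{theorem_e_phi_L2}; note $h^r$ is already the \emph{optimal} gradient rate for $P_r$ elements), so you land at $\|\Theta_{\phi h}^{m+1}\|\le C(\tau+h^{r+1/2})$ --- half an order short, no matter how $\varepsilon$ is chosen; "absorbing the gradient into the dissipation'' only closes the $H^{-1}$ Gr\"onwall loop, it does not upgrade the output norm. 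The chemical-potential claim fails the same way: testing \eqref{eq_error_equations_mu} with $\Theta_{\mu h}^{n+1}$ reintroduces $(\nabla\Theta_{\phi h}^{n+1},\nabla\Theta_{\mu h}^{n+1})$, and since $\tau\sum_n\|\nabla\Theta_{\mu h}^{n+1}\|^2$ is only $O(\tau^2+h^{2r})$ (Lemma \ref{lemma_ephi_error}), the product is $O(\tau^2+h^{2r+1})$. This in turn breaks your velocity step, because absorbing the coupling term $2\tau\left(\mu^{n+1}\nabla\phi^{n+1}-\mu_h^{n+1}\nabla\phi_h^{n+1},\Theta_{\tilde{\mathbf{u}}h}^{n+1}\right)$ requires either an optimal bound on $\tau\sum_n\|e_\mu^{n+1}\|^2$ or a $\tau\|e_\mu^{n+1}\|^2$ dissipation term on the left --- neither of which your phase identity supplies.

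The paper uses a different, crossed duality precisely to manufacture the needed quantities: it tests \eqref{eq_error_equations_phi} with $2\tau\Delta_h^{-1}\Theta_{\mu h}^{n+1}$ (the lift of the \emph{chemical potential} error) and \eqref{eq_error_equations_mu} with $\delta_\tau\Delta_h^{-1}\Theta_{\phi h}^{n+1}$. Then the elliptic term of \eqref{eq_error_equations_mu} produces the $L^2$ telescoping $\|\Theta_{\phi h}^{n+1}\|^2-\|\Theta_{\phi h}^{n}\|^2$, the diffusion term of \eqref{eq_error_equations_phi} produces the $L^2$-level dissipation $\tau\left(\|\Theta_{\mu h}^{n+1}\|^2+\|e_\mu^{n+1}\|^2-\|\Phi_{\mu h}^{n+1}\|^2\right)$, and the cross terms $(\delta_\tau\Theta_{\phi h}^{n+1},\Delta_h^{-1}\Theta_{\mu h}^{n+1})$ cancel --- see \eqref{eq_combining_additional00001}: these are exactly the two norms the lemma claims. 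The price, and the real work in the paper's proof, is that the cubic and convective differences are then paired against $\delta_\tau\Delta_h^{-1}\Theta_{\phi h}^{n+1}$ and $\Delta_h^{-1}\Theta_{\mu h}^{n+1}$, which is handled through the $H^{-2}$-type bound on $\delta_\tau\Theta_{\phi h}^{n+1}$ derived from the recast error equation \eqref{eq_delta_tau_Theta_phin1}. Moreover, the $\tau\|e_\mu^{n+1}\|^2$ dissipation generated this way is what absorbs the $\frac{\tau}{4}\|e_\mu^{n+1}\|^2$ left over from the phase--velocity coupling term $Tb_{11}$, which is the structural reason the phase and velocity identities must be summed before Gr\"onwall --- your instinct about the joint closure is correct, but it only works with this choice of test functions.
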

\begin{proof}
	Testing $w_h=2\tau\Delta_h^{-1}\Theta_{\mu h}^{n+1}$,
	$\varphi_h=\delta_{\tau}\Delta_h^{-1}\Theta_{\phi h}^{n+1}$
	and $\mathbf{v}_h=2\tau\Theta_{\tilde{\mathbf{u}} h}^{n+1}$ in 
	the equations \eqref{eq_error_equations_phi}-\eqref{eq_error_equations_ns},
	and using \eqref{eq_discrete_Laplacian_operator}-\eqref{eq_discrete_Laplacian_operator11} and
	\eqref{eq_recast_error_equation_projection_step_squaring}, we have
	\begin{equation}
		\label{eq_combining_additional00001}
		\begin{aligned}
			&\|\Theta_{\phi h}^{n+1}\|^2-\|\Theta_{\phi h}^{n}\|^2+\|\Theta_{\phi h}^{n+1}-\Theta_{\phi h}^{n}\|^2
			+\tau\left(\|\Theta_{\mu h}^{n+1}\|^2+\|e_{\mu}^{n+1}\|^2\right)\\
			=&~\tau\|\Phi_{\mu h}^{n+1}\|^2+2\tau\left(\Phi_{\phi h}^{n+1},\delta_{\tau}\Theta_{\phi h}^{n+1}\right)
			+2\tau\left(R_1^{n+1},\Delta_h^{-1}\Theta_{\mu h}^{n+1}\right)\\
			&+2\tau\left((\phi_h^{n+1})^3-(\phi^{n+1})^3-e_{\phi}^n,\delta_{\tau}\Delta_h^{-1}\Theta_{\phi h}^{n+1}\right)\\
			=&~\tau\|\Phi_{\mu h}^{n+1}\|^2+\sum_{i=1}^{3}Tb_i,
		\end{aligned}
	\end{equation}
and 
\begin{equation}
	\begin{aligned}
		&\|\Theta_{{\mathbf{u}} h}^{n+1}\|^2-\|\Theta_{\mathbf{u} h}^n\|^2+\|\Theta_{\tilde{\mathbf{u}} h}^{n+1}-\Theta_{\mathbf{u} h}^n\|^2+\tau^2\left(\|\nabla\Theta_{p h}^{n+1}\|^2-\|\nabla\Theta_{p h}^n\|^2\right)+2\tau\|\nabla\Theta_{\tilde{\mathbf{u}} h}^{n+1}\|^2\\
		=&~	2\tau\left(\delta_{\tau}\Phi_{\mathbf{u} h}^{n+1},\Theta_{\tilde{\mathbf{u}} h}^{n+1}\right)+2\tau\left(R_2^{n+1},\Theta_{\tilde{\mathbf{u}} h}^{n+1}\right)
		+2\tau^2\left(\nabla\Theta_{p h}^{n+1},R_3^{n+1}-\tau\delta_{\tau}\nabla\Phi_{p h}^{n+1}\right)\\
		&+2\tau\left(\Theta_{\tilde{\mathbf{u}} h}^{n+1},\nabla\left(\Phi_{p h}^{n+1}-\Phi_{p h}^{n}\right)\right)
		+2\tau\left(\Theta_{\tilde{\mathbf{u}} h}^{n+1},R_3^{n+1}-\tau\delta_{\tau}\nabla\Phi_{p h}^{n+1}\right)
		+\tau^2\|R_3^{n+1}-\tau\delta_{\tau}\nabla\Phi_{p h}^{n+1}\|^2\\
		&+2\tau\left(\mathbf{u}^{n+1}\cdot\nabla\phi^{n+1}-\mathbf{u}_h^n\cdot\nabla\phi_h^{n+1},\Delta_h^{-1}\Theta_{\mu h}^{n+1}\right)\\
		&+2\tau\left(\mu^{n+1}\nabla\phi^{n+1}-\mu_h^{n+1}\nabla\phi_h^{n+1},\Theta_{\tilde{\mathbf{u}} h}^{n+1}\right)
		-2\tau\left(\mathbf{u}^{n+1}\cdot\nabla\mathbf{u}^{n+1}-\mathbf{u}_h^n\cdot\nabla\tilde{\mathbf{u}}_h^{n+1},\Theta_{\tilde{\mathbf{u}} h}^{n+1}\right)\\
		=&~\sum_{i=4}^{12}Tb_i.
	\end{aligned}
\end{equation}
We now need to estimate the right-hand side terms $Tb_1,\cdots,Tb_{12}$ of \eqref{eq_combining_additional00001}.
\begin{equation}
	\label{eq_boundness_Tb1_2}
	\begin{aligned}
		\bigg|Tb_1+Tb_2\bigg|=&~\bigg|2\tau\left(\Phi_{\phi h}^{n+1},\delta_{\tau}\Theta_{\phi h}^{n+1}\right)+2\tau\left(R_1^{n+1},\Delta_h^{-1}\Theta_{\mu h}^{n+1}\right)\bigg|\\
		\leq &~C\tau\left(\|\Phi_{\phi h}^{n+1}\|^2+\tau\|\delta_{\tau}\Theta_{\phi h}^{n+1}\|^2+\frac{1}{2}\|R_1^{n+1}\|^2+\frac{1}{2}\|\Theta_{\mu h}^{n+1}\|^2\right).
	\end{aligned}
\end{equation}
According to the inequalities \eqref{eq_delta_tau_Theta_phin1}, \eqref{eq_psi_Rhpsi_Hneg1_norm_inequation} and the fact that 
$\|\cdot\|_{H^{-2}}\leq \|\cdot\|_{H^{-1}}$, we obtain
\begin{equation}
\begin{aligned}
	\|\delta_{\tau}\Theta_{\phi h}^{n+1}\|_{H^{-2}}\leq&~\|\Delta_h e_{\mu}^{n+1}\|_{H^{-2}}+\|\delta_{\tau}\Phi_{\phi h}^{n+1}\|_{H^{-1}}
	+\|\mathbf{u}^{n+1}\cdot\nabla\phi^{n+1}-\mathbf{u}_h^n\cdot\nabla\phi_h^{n+1}\|_{H^{-1}}\\
	\leq&~\tau\|e_{\mu}^{n+1}\|+\frac{C\tau}{2}\|R_1^{n+1}\|+C\tau\left(\|\mathbf{u}_t\|_{L^{\infty}(0,T;\mathbf{H}^1(\Omega))}\|\nabla\phi^{n+1}\|
	\right)\\
	&+C\tau\|\nabla\phi^{n+1}\|\left(\|\Phi_{\mathbf{u} h}^{n+1}\|+\|\Theta_{\mathbf{u} h}^{n+1}\|\right)
	+C\tau\|\nabla\mathbf{u}_h^{n+1}\|\left(\|\Phi_{\phi h}^{n+1}\|+\|\Theta_{\phi h}^{n+1}\|\right).
\end{aligned}
\end{equation}
Using the Lemma \ref{Lemma_0403}, the inequalities \eqref{eq_phi3_error} and
\eqref{eq_estimate_error_T3}, the term $Tb_9$ is estimated by
\begin{equation}
	\label{eq_boundness_Tb9}
	\begin{aligned}
		\bigg|Tb_3\bigg|=&~\bigg|2\tau\left((\phi_h^{n+1})^3-(\phi^{n+1})^3-e_{\phi}^n,\delta_{\tau}\Delta_h^{-1}\Theta_{\phi h}^{n+1}\right)\bigg|\\
		\leq&~ 2\tau\bigg|\left(\phi_h^{n+1})^3-(\phi^{n+1})^3,\delta_{\tau}\Delta_h^{-1}\Theta_{\phi h}^{n+1}\right)\bigg|
		+2\tau\bigg|\left(e_{\phi}^n,\delta_{\tau}\Delta_h^{-1}\Theta_{\phi h}^{n+1}\right)\bigg|\\
		\leq&~C\left(\|\phi_h^{n+1})^3-(\phi^{n+1})^3\|+\|e_{\phi}^n\|\right)\|\delta_{\tau}\Theta_{\phi h}^{n+1}\|_{H^{-2}}\\
		\leq&~\frac{\tau}{2}\|e_{\mu}^{n+1}\|^2+\frac{C\tau}{2}\|R_1^{n+1}\|^2+C\tau^3\|\mathbf{u}_t\|_{L^\infty(0,T;\mathbf{H}^1(\Omega))}^2\|\nabla\phi^{n+1}\|^2\\
		&+C\tau\left(\|\nabla\phi^{n+1}\|^2+\|\nabla\mathbf{u}_h^n\|^2
		+\|\Theta_{\phi h}^{n}\|^2+\|\Phi_{\phi h}^{n}\|^2\right)
		\left(\|\Theta_{\mathbf{u} h}^{n+1}\|^2+\|\Phi_{\mathbf{u} h}^{n+1}\|^2\right.\\
		&\left.
		+\|\Theta_{\phi h}^{n+1}\|^2+\|\Phi_{\phi h}^{n+1}\|^2\right)
		+C h^{2(r+1)}.
	\end{aligned}
\end{equation}
Using a similar proof to that of the Lemma \ref{lemma_u_p_L2_norm_boundness}, it can easily be proved that
\begin{equation}
	\label{eq_boundness_sum_Tb3_8}
	\begin{aligned}
		\bigg|\sum_{i=4}^{9}Tb_i\bigg|\leq&~ C\tau\left(\frac{1}{3}\|\nabla\Theta_{\tilde{\mathbf{u}} h}^{n+1}\|^2+\|\delta_{\tau}\Phi_{\mathbf{u} h}^{n+1}\|^2
		+\|R_2^{n+1}\|^2\right)\\
		&+C\tau\left(\tau^2\|\nabla\Theta_{p h}^{n+1}\|^2+\|R_3^{n+1}\|^2+\tau^3\|\delta_{\tau}\nabla\Phi_{p h}^{n+1}\|^2\right)\\
		&+C\tau^3\left(\|\delta_{\tau}\Phi_{p h}^{n+1}\|_{H^1}^2
			+\|\delta_{\tau}\nabla\Phi_{p h}^{n+1}\|^2\right).
	\end{aligned}
\end{equation}
We next estimate the three terms $Tb_{10}$, $Tb_{11}$ and $Tb_{12}$ as follows
\begin{equation}
	\begin{aligned}
		\bigg|Tb_{10}\bigg|=&~\bigg|2\tau\left(\mathbf{u}^{n+1}\cdot\nabla\phi^{n+1}-\mathbf{u}_h^n\cdot\nabla\phi_h^{n+1},\Delta_h^{-1}\Theta_{\mu h}^{n+1}\right)\bigg|\\
		\leq &~2\tau\left(e_{\mathbf{u}}^{n+1}\cdot\nabla\phi^{n+1}+\mathbf{u}_h^n\cdot\nabla e_{\phi}^{n+1}+\left(\mathbf{u}_h^{n+1}-\mathbf{u}_h^n\right)\cdot\nabla\phi^{n+1},\Delta_h^{-1}\Theta_{\mu h}^{n+1}\right)\\
		\leq &~\frac{C\tau}{4}\|\Theta_{\mu h}^{n+1}\|^2+C\tau\left(\|\phi^{n+1}\|_{H^1}^2+\|\nabla\mathbf{u}_h^n\|^2+\|\mathbf{u}_t\|_{L^{\infty}(0,T;\mathbf{H}^1(\Omega))}^2\|\phi^{n+1}\|_{H^1}^2\right),
	\end{aligned}
\end{equation}
and
\begin{equation}
	\begin{aligned}
		\bigg|Tb_{11}\bigg|=&~\bigg|2\tau\left(\mu^{n+1}\nabla\phi^{n+1}-\mu_h^{n+1}\nabla\phi_h^{n+1},\Theta_{\tilde{\mathbf{u}} h}^{n+1}\right)\bigg|\\
		=&~2\tau\bigg|\left(e_{\mu}^{n+1}\nabla\phi^{n+1}+\mu_h^{n+1}\nabla e_{\phi}^{n+1},\Theta_{\tilde{\mathbf{u}} h}^{n+1}\right)\bigg|\\
		\leq&~2\tau\|e_{\mu}^{n+1}\|_{L^4}\|\phi^{n+1}\|_{L^4}\|\nabla\Theta_{\tilde{\mathbf{u}} h}^{n+1}\|
			+2\tau\|\mu_h^{n+1}\|_{L^4}\|e_{\phi}^{n+1}\|_{L^4}\|\nabla\Theta_{\tilde{\mathbf{u}} h}^{n+1}\|\\
		\leq&~\frac{\tau}{3}\|\nabla\Theta_{\tilde{\mathbf{u}} h}^{n+1}\|^2+\frac{\tau}{4}\|e_{\mu}^{n+1}\|^2+C\tau^3\left(\|\mu_h^{n+1}\|_{H^1}^2+\|\phi^{n+1}\|_{H^1}^2\right)\left(\|\Phi_{\phi h}^{n+1}\|^2+\|\Theta_{\phi h}^{n+1}\|^2\right),
	\end{aligned}
\end{equation}
and
\begin{equation}
	\begin{aligned}
		\bigg|Tb_{12}\bigg|=&~\bigg|-2\tau\left(\mathbf{u}^{n+1}\cdot\nabla\mathbf{u}^{n+1}-\mathbf{u}_h^n\cdot\nabla\tilde{\mathbf{u}}_h^{n+1},\Theta_{\tilde{\mathbf{u}} h}^{n+1}\right)\bigg|\\
		=&~2\tau\left(e_{\mathbf{u}}^{n+1}\cdot\nabla \mathbf{u}^{n+1}+\mathbf{u}_h^n\cdot\nabla e_{\mathbf{u}}^{n+1}+\mathbf{u}_h^n\cdot\nabla e_{\tilde{\mathbf{u}}}^{n+1}+\left(\mathbf{u}_h^{n+1}-\mathbf{u}_h^n\right)\cdot\nabla\mathbf{u}^{n+1},\Theta_{\tilde{\mathbf{u}} h}^{n+1}\right)\\
		\leq&~\frac{\tau}{3}\|\nabla\Theta_{\tilde{\mathbf{u}} h}^{n+1}\|^2+C\tau\left(\|\nabla\mathbf{u}^{n+1}\|^2+\|\nabla\mathbf{u}_h^n\|^2\right)\left(\|\Theta_{{\mathbf{u}} h}^{n+1}\|^2+\|\Phi_{\mathbf{u} h}^{n+1}\|^2\right)\\
				&+C\tau\|\nabla\mathbf{u}_h^n\|^2\left(\|\Theta_{\tilde{\mathbf{u}} h}^{n+1}\|^2+\|\Phi_{\mathbf{u} h}^{n+1}\|^2\right).
	\end{aligned}
\end{equation}
Combining the above inequalities $Tb_1,\cdots,Tb_{12}$ and summing up from $n=0$ to $m$, we have
\begin{equation}
	\begin{aligned}
		&\|\Theta_{\phi h}^{m+1}\|^2+\tau\sum_{n=0}^{m}\left(\frac{1}{2}\|\Theta_{\mu h}^{n+1}\|^2+\frac{1}{2}\|e_{\mu}^{n+1}\|^2\right)\\
		\leq&~\|\Theta_{\phi h}^0\|^2+\tau\sum_{n=0}^{m}\|\Phi_{\mu h}^{n+1}\|^2
				+C\tau\sum_{n=0}^{m}\left(\|\Phi_{\phi h}^{n+1}\|^2+\|R_1^{n+1}\|^2
				+\tau^2\|\mathbf{u}_t\|_{L^\infty(0,T;L^2(\Omega))}^2\|\nabla\phi^{n+1}\|^2\right)\\
			&+C\tau\sum_{n=0}^{m}\left(\|\nabla\phi^{n+1}\|^2+\|\nabla\mathbf{u}_h^n\|^2
			+\|\Theta_{\phi h}^{n}\|^2+\|\Phi_{\phi h}^{n}\|^2\right)
			\left(\|\Theta_{\mathbf{u} h}^{n+1}\|^2+\|\Phi_{\mathbf{u} h}^{n+1}\|^2\right.\\
			&\left.
			+\|\Theta_{\phi h}^{n+1}\|^2+\|\Phi_{\phi h}^{n+1}\|^2\right)+C h^{2(r+1)},\\
	\end{aligned}
\end{equation}
and 
\begin{equation}
	\begin{aligned}
		&\|\Theta_{{\mathbf{u}} h}^{m+1}\|^2+\tau^2\|\Theta_{p h}^{m+1}\|
		+\tau\sum_{n=0}^{m}\|\nabla\Theta_{\tilde{\mathbf{u}} h}^{n+1}\|^2\\
		\leq&~\|\Theta_{{\mathbf{u}} h}^0\|^2+\tau^2\|\nabla\Theta_{p h}^0\|^2
		+C\tau\sum_{n=0}^{m}\left(\|\delta_{\tau}\Phi_{\mathbf{u} h}^{n+1}\|^2+\|\Theta_{\mu h}^{n+1}\|^2+\|R_2^{n+1}\|^2+\|R_3^{n+1}\|^2\right)\\
		&+C\tau^3\sum_{n=0}^{m}\left(\|\mu_h^{n+1}\|_{H^1}^2+\|\phi^{n+1}\|_{H^1}^2\right)\left(\|\Phi_{\phi h}^{n+1}\|^2+\|\Theta_{\phi h}^{n+1}\|^2\right)\\
		&+C\tau^3\sum_{n=0}^{m}\left(\|\delta_{\tau}\Phi_{p h}^{n+1}\|^2
		+\|\mathbf{u}_t\|_{L^\infty(0,T;\mathbf{H}^1(\Omega))}^2\|\nabla\phi^{n+1}\|^2\right)\\
		&+C\tau\sum_{n=0}^{m}\left(\left(\|\nabla\mathbf{u}^{n+1}\|^2+\|\nabla\mathbf{u}_h^n\|^2\right)\left(\|\Theta_{{\mathbf{u}} h}^{n+1}\|^2+\|\Phi_{\mathbf{u} h}^{n+1}\|^2\right)+\|\nabla\mathbf{u}_h^n\|^2\left(\|\Theta_{\tilde{\mathbf{u}} h}^{n+1}\|^2+\|\Phi_{\mathbf{u} h}^{n+1}\|^2\right)\right).
	\end{aligned}
\end{equation}
According to the  triangle inequality, the inequality \eqref{eq_psi_Rhpsi_Hneg1_norm_inequation}, the Lemma \ref{lemma_stokes_projection001}, \ref{Lemma_0301} and \ref{lemma_boundness_phi_mu},
this completes the proof of Lemma \ref{lemma_L2_e_phi_u_mu}.
\end{proof}
\begin{Remark}\label{remark_uinfty_boundedness_explanation}
	Finally, we aim to confirm that, as a consequence of $\tau\leq \frac{\alpha}{C\|\mathbf{u}_h^{n}\|_{L^{\infty}}^2}$, the $a~ priori$ assumption \eqref{eq_Linfty_u_boundedness} remains valid for all sufficiently small values of $h$. To achieve this, we will show that, for sufficiently small $h$, the following requirement is satisfied. We proceed to prove \eqref{eq_Linfty_u_boundedness} using mathematical induction. Specifically, for $h \leq h_2$, \eqref{eq_Linfty_u_boundedness} holds when $n=0$, given that $\mathbf{u}_h^0=\boldsymbol{P}_h\mathbf{u}^0$. Assuming that \eqref{eq_Linfty_u_boundedness} holds for $n \leq m$, we will demonstrate that this assumption also holds for $n = m + 1$. Indeed, if \eqref{eq_Linfty_u_boundedness} holds for $n \leq m$, then \eqref{eq_assumption_conditions} holds for $n = m$. By using the inverse inequality, for $h\leq h_2$, we have
	\begin{equation}
		\begin{aligned}	
			\left\|\mathbf{u}_h^{m+1}\right\|_{L^{\infty}} \leq &~ \left\|\mathbf{u}_h^{m+1}-\boldsymbol{P}_h \mathbf{u}^{m+1}\right\|_{L^{\infty}}+\left\|\boldsymbol{P}_h \mathbf{u}^{m+1}\right\|_{L^{\infty}} \\
			\leq &~ C_1 h^{-d / 2}\left\|\mathbf{u}_h^{m+1}-\boldsymbol{P}_h \mathbf{u}^{m+1}\right\|+\left\|\boldsymbol{P}_h \mathbf{u}^{m+1}\right\|_{L^{\infty}} \\
			\leq & ~C_1 h^{-d / 2}\left\|\mathbf{u}^{m+1}-\mathbf{u}_h^{m+1}\right\|+C h^{-d / 2}\left\|\mathbf{u}^{m+1}-\boldsymbol{P}_h\mathbf{u}^{m+1}\right\| \\
			& \quad+C\|\mathbf{u}\|_{L^{\infty}\left(0, T; \mathbf{L}^{\infty}(\Omega)\right)} \\
			\leq & ~C\|\mathbf{u}\|_{L^{\infty}\left(0, T; \mathbf{L}^{\infty}(\Omega)\right)} .
		\end{aligned}
	\end{equation}
	Therefore, \eqref{eq_Linfty_u_boundedness} also holds at $n=m+1$. The boundedness explanation for $\|\mathbf{u}_h^n\|_{L^\infty}$ is complete.
	
\end{Remark}
\begin{Lemma}\label{lemma_eu_Hne1_norm}
	Under the assumption of regularity \eqref{eq_varibles_satisfied_regularities}, for all $m\geq 0$, there exists a positive constant that does not depend on $\tau$ and $h$ such that
	\begin{equation}
		\sum_{n=0}^{m}\|e_{\mathbf{u}}^{n+1}-e_{\mathbf{u}}^n\|_{H^{-1}}^2\leq C\left(\tau^2+h^{2r}\right).
	\end{equation}
\end{Lemma}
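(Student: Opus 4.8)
The plan is to estimate $\|e_{\mathbf{u}}^{n+1}-e_{\mathbf{u}}^n\|_{H^{-1}}$ by duality against a combined momentum/projection error identity. First I would add $\tau$ times the projection-step error equation \eqref{eq_error_equations_u_p_cor}, tested against $\mathbf{v}_h$, to $\tau$ times the momentum error equation \eqref{eq_error_equations_ns}; the intermediate pressure gradients telescope and one obtains, for every $\mathbf{v}_h\in\mathbf{X}_h^{r}$,
\[
(e_{\mathbf{u}}^{n+1}-e_{\mathbf{u}}^n,\mathbf{v}_h)=-\tau\nu(\nabla e_{\tilde{\mathbf{u}}}^{n+1},\nabla\mathbf{v}_h)+\tau(e_p^{n+1},\nabla\cdot\mathbf{v}_h)-\tau\,\mathcal{N}(\mathbf{v}_h)+\tau(R_2^{n+1}+R_3^{n+1},\mathbf{v}_h),
\]
where $\mathcal{N}(\mathbf{v}_h)$ collects the convective and phase-coupling differences $\boldsymbol{B}(\mathbf{u}^{n+1},\mathbf{u}^{n+1},\mathbf{v}_h)-\boldsymbol{B}(\mathbf{u}_h^n,\tilde{\mathbf{u}}_h^{n+1},\mathbf{v}_h)$ and $b(\phi_h^{n+1},\mathbf{v}_h,\mu_h^{n+1})-b(\phi^{n+1},\mathbf{v}_h,\mu^{n+1})$. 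The decisive structural feature is that \emph{every} term on the right carries an explicit factor $\tau$.

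Next, for arbitrary $\mathbf{v}\in\mathbf{H}_0^1(\Omega)$ I would write $\mathbf{v}=(\mathbf{v}-\boldsymbol{I}_h\mathbf{v})+\boldsymbol{I}_h\mathbf{v}$. Since $\Theta_{\mathbf{u} h}^{n+1}-\Theta_{\mathbf{u} h}^n\in\mathbf{X}_h^{r}$, the $L^2$-orthogonality \eqref{eq_L2_projection_operators} annihilates it against $\mathbf{v}-\boldsymbol{I}_h\mathbf{v}$, leaving only the projection increment $\Phi_{\mathbf{u} h}^{n+1}-\Phi_{\mathbf{u} h}^n=\tau\delta_\tau\Phi_{\mathbf{u} h}^{n+1}$, controlled by $\|\mathbf{v}-\boldsymbol{I}_h\mathbf{v}\|\le Ch\|\nabla\mathbf{v}\|$ and Lemma \ref{lemma_stokes_projection001}. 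For the discrete piece $\boldsymbol{I}_h\mathbf{v}$ I introduce the discrete Helmholtz splitting $\boldsymbol{I}_h\mathbf{v}=\mathbf{w}_h+\mathbf{g}_h$, where $\mathbf{w}_h$ lies in the discretely divergence-free subspace $\mathbf{V}_h=\{\mathbf{z}_h\in\mathbf{X}_h^{r}:(\nabla\cdot\mathbf{z}_h,q_h)=0\ \forall q_h\in\mathring{S}_h^{r-1}\}$ and $\mathbf{g}_h$ is its $L^2$-orthogonal complement, with $\|\nabla\mathbf{w}_h\|\le C\|\nabla\mathbf{v}\|$ and $\|\mathbf{g}_h\|\le C\|\nabla\mathbf{v}\|$.

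The key observation is that $\Theta_{\mathbf{u} h}^{n+1}$ is itself discretely divergence-free: combining the incompressibility error equation \eqref{eq_error_equations_incompressible} with the Stokes-projection property \eqref{eq_stokes_quasi_proojection_equation0002} and $\nabla\cdot\mathbf{u}^{n+1}=0$ gives $(\nabla\cdot\Theta_{\mathbf{u} h}^{n+1},q_h)=0$, so $\Theta_{\mathbf{u} h}^{n+1}\perp\mathbf{g}_h$. Hence $(e_{\mathbf{u}}^{n+1}-e_{\mathbf{u}}^n,\mathbf{g}_h)=(\Phi_{\mathbf{u} h}^{n+1}-\Phi_{\mathbf{u} h}^n,\mathbf{g}_h)$ is again governed by $\tau\delta_\tau\Phi_{\mathbf{u} h}^{n+1}$. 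For the divergence-free part $\mathbf{w}_h$ I apply the combined identity: the pressure term collapses to $\tau(e_p^{n+1},\nabla\cdot\mathbf{w}_h)=\tau(\Phi_{p h}^{n+1},\nabla\cdot\mathbf{w}_h)=O(\tau h^{r}\|\nabla\mathbf{v}\|)$, because $p_h^{n+1},P_hp^{n+1}\in\mathring{S}_h^{r-1}$ are orthogonal to $\nabla\cdot\mathbf{w}_h$, while the viscous, nonlinear and truncation contributions are bounded exactly as in Lemma \ref{lemma_u_p_L2_norm_boundness} by $\|\nabla\mathbf{v}\|$ times $\|\nabla e_{\tilde{\mathbf{u}}}^{n+1}\|$, $\|e_\mu^{n+1}\|_{H^1}$, $\|e_\phi^{n+1}\|_{H^1}$, $\|e_{\mathbf{u}}^{n+1}\|$ and $\|R_2^{n+1}\|+\|R_3^{n+1}\|$. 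Dividing by $\|\nabla\mathbf{v}\|$ and taking the supremum, every contribution to $\|e_{\mathbf{u}}^{n+1}-e_{\mathbf{u}}^n\|_{H^{-1}}$ carries a factor $\tau$; squaring and summing converts one $\tau$ into the outer weight, so that Theorem \ref{theorem_e_phi_L2}, Lemma \ref{lemma_L2_e_phi_u_mu} and Lemma \ref{lemma_truncation_errors} give $\sum_{n=0}^{m}\|e_{\mathbf{u}}^{n+1}-e_{\mathbf{u}}^n\|_{H^{-1}}^2\le C\tau(\tau^2+h^{2r})\le C(\tau^2+h^{2r})$.

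The main obstacle is precisely the pressure term $\tau(e_p^{n+1},\nabla\cdot\mathbf{v}_h)$: the discrete pressure error $\Theta_{p h}^{n+1}$ has no $L^2$ bound available at this stage—indeed the present lemma is what is later used, together with the inf-sup condition \eqref{eq_inf_sup_condition}, to \emph{establish} that bound—so $\Theta_{p h}^{n+1}$ must be made to vanish rather than estimated. The discrete Helmholtz decomposition is what accomplishes this, and it succeeds only because $\Theta_{\mathbf{u} h}^{n+1}$ is genuinely discretely divergence-free, so the gradient component $\mathbf{g}_h$ of the test function sees only the $O(h^{r+1})$ velocity projection error $\Phi_{\mathbf{u} h}$ and never $\Theta_{\mathbf{u} h}$. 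I expect the only delicate points to be the $H^1$-stability of the discrete Helmholtz/Leray projection on the quasi-uniform mesh and the verification that the spare power of $\tau$ survives the splitting of the convective terms $\mathcal{N}$ exactly as in Lemma \ref{lemma_u_p_L2_norm_boundness}; the remaining steps are routine applications of the bounds already proved.
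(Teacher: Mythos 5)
Your proposal is sound and reaches the stated bound, but it takes a genuinely different route from the paper. Both arguments start from the same combined identity obtained by adding \eqref{eq_error_equations_ns} and \eqref{eq_error_equations_u_p_cor} (the paper's \eqref{eq_adding_ns_incompressible}), and both ultimately close the estimate with Lemma \ref{lemma_truncation_errors} and Theorem \ref{theorem_e_phi_L2}. The difference is the mechanism for producing the $H^{-1}$ norm. The paper tests the combined equation with $\mathbf{v}_h=\tau A_h^{-1}(e_{\mathbf{u}}^{n+1}-e_{\mathbf{u}}^n)$, so that the left side becomes $\|e_{\mathbf{u}}^{n+1}-e_{\mathbf{u}}^n\|_{H^{-1}}^2$ by \eqref{eq_discrete_stokes_operators001}, the pressure term is annihilated automatically by the divergence-free range of $A_h^{-1}$ together with \eqref{eq_error_equations_incompressible}, and --- crucially --- the viscous term is not estimated at all: using $\mathbf{u}_h^{n+1}=\boldsymbol{I}_h\tilde{\mathbf{u}}_h^{n+1}$ and \eqref{eq_discrete_Laplacian_operator} it is converted into the telescoping quantity $-\tfrac{\tau}{2}\left(\|e_{\mathbf{u}}^{n+1}\|^2-\|e_{\mathbf{u}}^{n}\|^2+\|e_{\mathbf{u}}^{n+1}-e_{\mathbf{u}}^{n}\|^2\right)$, which is moved to the left and even yields a bound on $\tau\|e_{\mathbf{u}}^{m+1}\|^2$ for free. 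You instead work with the true dual norm, pairing against arbitrary $\mathbf{v}\in\mathbf{H}_0^1(\Omega)$, splitting off $\mathbf{v}-\boldsymbol{I}_h\mathbf{v}$ by $L^2$-orthogonality, and performing a discrete Helmholtz decomposition of $\boldsymbol{I}_h\mathbf{v}$; your observation that $\Theta_{\mathbf{u} h}^{n+1}$ is discretely divergence-free (via \eqref{eq_error_equations_incompressible} and \eqref{eq_stokes_quasi_proojection_equation0002}) is correct and is what lets the gradient component see only $\Phi_{\mathbf{u} h}$, while $\Theta_{p h}^{n+1}$ drops out against the discretely divergence-free part and only the $O(h^{r})$ term $\Phi_{p h}^{n+1}$ survives. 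You then bound the viscous term crudely by $\tau\|\nabla e_{\tilde{\mathbf{u}}}^{n+1}\|\|\nabla\mathbf{v}\|$, which works only because Theorem \ref{theorem_e_phi_L2} already controls $\tau\sum_n\|\nabla e_{\tilde{\mathbf{u}}}^{n+1}\|^2$; this is a legitimate trade: you avoid the paper's operator-identity trick but consume the $H^1$ error bound where the paper gets its viscous contribution for free. The one ingredient your route needs that the paper's does not is the $H^1$-stability of the $L^2$-orthogonal (discrete Leray) decomposition, i.e.\ $\|\nabla\mathbf{w}_h\|\leq C\|\nabla\mathbf{v}\|$; you flag this correctly --- it is a known but nontrivial property of inf-sup stable pairs on quasi-uniform meshes and would need a citation or proof, whereas the paper simply imports the corresponding properties of $A_h$ from its references. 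With that point supplied, your argument is complete.
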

\begin{proof}
	Adding the \eqref{eq_error_equations_ns}-\eqref{eq_error_equations_u_p_cor}, we obtain
	\begin{equation}
		\label{eq_adding_ns_incompressible}
		\begin{aligned}
			\left(\delta_{\tau}e_{\mathbf{u}}^{n+1},\mathbf{v}_h\right)+\left(\nabla e_{\tilde{\mathbf{u}}}^{n+1},\nabla\mathbf{v}_h\right)
			-\left(e_p^{n+1},\nabla\cdot\mathbf{v}_h\right)-\left(R_2^{n+1},\mathbf{v}_h\right)-\left(R_3^{n+1},\mathbf{v}_h\right)&\\
			+\left(\mathbf{u}^{n+1}\cdot\nabla\mathbf{u}^{n+1}-\mathbf{u}_h^{n}\cdot\nabla\tilde{\mathbf{u}}_h^{n+1},\mathbf{v}_h\right)
			-\left(\mu^{n+1}\nabla\phi^{n+1}-\mu_h^{n+1}\phi_h^{n+1},\mathbf{v}_h\right)&=0.
		\end{aligned}
	\end{equation}
Testing $\mathbf{v}_h=\tau A_h^{-1}(e_{\mathbf{u}}^{n+1}-e_{\mathbf{u}}^n)$ and using the 
\eqref{eq_discrete_stokes_operators001}-\eqref{eq_discrete_stokes_operators002}, \eqref{eq_error_equations_incompressible}, \eqref{eq_recast_error_equation_projection_step},
we have
\begin{equation}
	\label{eq_eu_H_ne1_norms}
	\begin{aligned}
		\|e_{\mathbf{u}}^{n+1}-e_{\mathbf{u}}^n\|_{H^{-1}}^2=&~\tau\left(\Delta_h e_{\tilde{\mathbf{u}}}^{n+1},A_h^{-1}\left(e_{\mathbf{u}}^{n+1}-e_{\mathbf{u}}^n\right)\right)
		+\tau\left(\mu^{n+1}\nabla\phi^{n+1}-\mu_h^{n+1}\phi_h^{n+1},A_h^{-1}(e_{\mathbf{u}}^{n+1}-e_{\mathbf{u}}^n)\right)\\
		&-	\tau\left(\mathbf{u}^{n+1}\cdot\nabla\mathbf{u}^{n+1}-\mathbf{u}_h^{n}\cdot\nabla\tilde{\mathbf{u}}_h^{n+1},A_h^{-1}(e_{\mathbf{u}}^{n+1}-e_{\mathbf{u}}^n)\right)\\
		&+\tau\left(R_2^{n+1},A_h^{-1}(e_{\mathbf{u}}^{n+1}-e_{\mathbf{u}}^n)\right)+\tau\left(R_3^{n+1},A_h^{-1}(e_{\mathbf{u}}^{n+1}-e_{\mathbf{u}}^n)\right).
	\end{aligned}
\end{equation}
According to \eqref{eq_L2_projection_operators} and \eqref{eq_fully_discrete_CHNS_scheme_tilde_u_u1_p_p1}, 
we can obtain $\mathbf{u}_h^{n+1}=\boldsymbol{I}_h\tilde{\mathbf{u}}_h^{n+1}$. 
Using the \eqref{eq_discrete_Laplacian_operator} and the above results, we have 
\begin{equation}
	\label{eq_projections_reslut}
	\begin{aligned}
		\tau\left(\Delta_h e_{\tilde{\mathbf{u}}}^{n+1},A_h^{-1}\left(e_{\mathbf{u}}^{n+1}-e_{\mathbf{u}}^n\right)\right)
		=&~-\left(e_{\tilde{\mathbf{u}}}^{n+1},-\tau\Delta_hA_h^{-1}\left(e_{\mathbf{u}}^{n+1}-e_{\mathbf{u}}^n\right)\right)\\
		=&~-\tau\left(e_{{\mathbf{u}}}^{n+1},e_{\mathbf{u}}^{n+1}-e_{\mathbf{u}}^n\right)\\
		=&~-\frac{\tau}{2}\left(\|e_{\mathbf{u}}^{n+1}\|^2-\|e_{\mathbf{u}}^{n}\|^2+\|e_{\mathbf{u}}^{n+1}-e_{\mathbf{u}}^{n}\|^2\right).
	\end{aligned}
\end{equation}
By substituting \eqref{eq_projections_reslut} into \eqref{eq_eu_H_ne1_norms}, we obtain
\begin{equation}
	\label{eq_substituting}
	\begin{aligned}
		&\|e_{\mathbf{u}}^{n+1}-e_{\mathbf{u}}^n\|_{H^{-1}}^2+\frac{\tau}{2}\left(\|e_{\mathbf{u}}^{n+1}\|^2-\|e_{\mathbf{u}}^{n}\|^2+\|e_{\mathbf{u}}^{n+1}-e_{\mathbf{u}}^{n}\|^2\right)\\
		=&~\tau\left(\mu^{n+1}\nabla\phi^{n+1}-\mu_h^{n+1}\phi_h^{n+1},A_h^{-1}(e_{\mathbf{u}}^{n+1}-e_{\mathbf{u}}^n)\right)\\
		&-	\tau\left(\mathbf{u}^{n+1}\cdot\nabla\mathbf{u}^{n+1}-\mathbf{u}_h^{n}\cdot\nabla\tilde{\mathbf{u}}_h^{n+1},A_h^{-1}(e_{\mathbf{u}}^{n+1}-e_{\mathbf{u}}^n)\right)\\
		&+\tau\left(R_2^{n+1},A_h^{-1}(e_{\mathbf{u}}^{n+1}-e_{\mathbf{u}}^n)\right)+\tau\left(R_3^{n+1},A_h^{-1}(e_{\mathbf{u}}^{n+1}-e_{\mathbf{u}}^n)\right).
	\end{aligned}
\end{equation}
We then estimate the right-hand side terms of \eqref{eq_substituting} as follows
\begin{equation}
	\label{eq_substituting_error_estimate001}
	\begin{aligned}
		&~\bigg|\tau\left(\mu^{n+1}\nabla\phi^{n+1}-\mu_h^{n+1}\nabla\phi_h^{n+1},A_h^{-1}(e_{\mathbf{u}}^{n+1}-e_{\mathbf{u}}^n)\right)\bigg|\\
		=&~\bigg|\tau\left(e_{\mu}^{n+1}\nabla\phi^{n+1}+\mu_h^{n+1}\nabla e_{\phi}^{n+1},A_h^{-1}(e_{\mathbf{u}}^{n+1}-e_{\mathbf{u}}^n)\right)\bigg|\\
		\leq&~\frac{1}{6}\|e_{\mathbf{u}}^{n+1}-e_{\mathbf{u}}^n\|_{H^{-1}}^2
			+C\tau^2\left(\|\phi^{n+1}\|_{H^1}^2\|e_{\mu}^{n+1}\|_{H^1}^2+\|\mu_h^{n+1}\|_{H^1}^2\|e_{\phi}^{n+1}\|_{H^1}^2\right),
	\end{aligned}
\end{equation}
and 
\begin{equation}
	\label{eq_substituting_error_estimate002}
	\begin{aligned}
		&\bigg|-	\tau\left(\mathbf{u}^{n+1}\cdot\nabla\mathbf{u}^{n+1}-\mathbf{u}_h^{n}\cdot\nabla\tilde{\mathbf{u}}_h^{n+1},A_h^{-1}(e_{\mathbf{u}}^{n+1}-e_{\mathbf{u}}^n)\right)\bigg|\\
		=&~\bigg| \tau\left(e_{\mathbf{u}}^{n}\cdot\nabla \mathbf{u}^{n+1}+\mathbf{u}_h^n\cdot\nabla e_{\mathbf{u}}^{n+1}+\mathbf{u}_h^n\cdot\nabla e_{\tilde{\mathbf{u}}}^{n+1}+\left(\mathbf{u}^{n+1}-\mathbf{u}^n\right)\cdot\nabla\mathbf{u}^{n+1},A_h^{-1}(e_{\mathbf{u}}^{n+1}-e_{\mathbf{u}}^n)\right)\bigg|\\
		\leq&~\frac{1}{6}\|e_{\mathbf{u}}^{n+1}-e_{\mathbf{u}}^n\|_{H^{-1}}^2+C\tau\left(
		\|e_{\mathbf{u}}^n\|^2\|\nabla\mathbf{u}^{n+1}\|^2+\|\nabla\mathbf{u}_h^{n}\|^2\|e_{\mathbf{u}}^{n+1}\|^2
		\right)+C\tau^4\|\mathbf{u}_t\|_{L^\infty(0,T;H^1(\Omega))}^2\|\nabla\mathbf{u}^{n+1}\|^2,
	\end{aligned}
\end{equation}
and
\begin{equation}
	\label{eq_substituting_error_estimate003}
	\begin{aligned}
		&\bigg|\tau\left(R_2^{n+1},A_h^{-1}(e_{\mathbf{u}}^{n+1}-e_{\mathbf{u}}^n)\right)+\tau\left(R_3^{n+1},A_h^{-1}(e_{\mathbf{u}}^{n+1}-e_{\mathbf{u}}^n)\right)\bigg|\\
		\leq&~\frac{1}{6}\|e_{\mathbf{u}}^{n+1}-e_{\mathbf{u}}^n\|_{H^{-1}}^2+C\tau^2\left(\|R_2^{n+1}\|+\|R_3^{n+1}\|^2\right).
	\end{aligned}
\end{equation}
Combining \eqref{eq_substituting_error_estimate001}-\eqref{eq_substituting_error_estimate003} with \eqref{eq_substituting}
and summing over $n,~(0\leq n\leq m)$, we have
\begin{equation}
	\begin{aligned}
		&~\frac{1}{2}\sum_{n=0}^{m}\|e_{\mathbf{u}}^{n+1}-e_{\mathbf{u}}^n\|_{H^{-1}}^2+\frac{\tau}{2}\|e_{\mathbf{u}}^{m+1}\|^2\\
		\leq&~\frac{\tau}{2}\|e_{\mathbf{u}}^0\|^2+C\tau^2\sum_{n=0}^{m}\left( \|\phi^{n+1}\|_{H^1}^2\|e_{\mu}^{n+1}\|_{H^1}^2+\|\mu_h^{n+1}\|_{H^1}^2\|e_{\phi}^{n+1}\|_{H^1}^2+\|R_2^{n+1}\|+\|R_3^{n+1}\|^2\right)\\
		&+C\tau\sum_{n=0}^{m}\left(
		\|e_{\mathbf{u}}^n\|^2\|\nabla\mathbf{u}^{n+1}\|^2+\|\nabla\mathbf{u}_h^{n}\|^2\|e_{\mathbf{u}}^{n+1}\|^2
		\right)+C\tau^4\sum_{n=0}^{m}\|\mathbf{u}_t\|_{L^\infty(0,T;\mathbf{H}^1(\Omega))}^2\|\nabla\mathbf{u}^{n+1}\|^2.
	\end{aligned}
\end{equation} 
The proof of this lemma is completed. 
\end{proof}
Next, we prove the optimal $L^2$ error estimate for the pressure. 
Using the inf-sup condition \eqref{eq_inf_sup_condition} and 
the equation \eqref{eq_adding_ns_incompressible}, we obtain
\begin{equation}
	\label{eq_error_ep}
	\begin{aligned}
		C\|e_p^{n+1}\|\leq&~ \frac{\left(e_p^{n+1},\nabla\cdot\mathbf{v}_h\right)}{\|\nabla\mathbf{v}_h\|}\\
		\leq&~[\left(\delta_{\tau}e_{\mathbf{u}}^{n+1},\mathbf{v}_h\right)+\left(\nabla e_{\tilde{\mathbf{u}}}^{n+1},\nabla\mathbf{v}_h\right)
		-\left(R_2^{n+1},\mathbf{v}_h\right)-\left(R_3^{n+1},\mathbf{v}_h\right)\\
		&+\left(\mathbf{u}^{n+1}\cdot\nabla\mathbf{u}^{n+1}-\mathbf{u}_h^{n}\cdot\nabla\tilde{\mathbf{u}}_h^{n+1},\mathbf{v}_h\right)
		-\left(\mu^{n+1}\nabla\phi^{n+1}-\mu_h^{n+1}\phi_h^{n+1},\mathbf{v}_h\right)]/\|\nabla\mathbf{v}_h\|\\
		\leq&~\frac{1}{\tau}\|e_{\mathbf{u}}^{n+1}-e_{\mathbf{u}}^{n}\|_{H^{-1}}+\|\nabla e_{\tilde{\mathbf{u}}}^{n+1}\|+
			\|R_2^{n+1}\|+\|R_3^{n+1}\|\\
			&+\|\nabla\mathbf{u}^{n+1}\|\left(\|e_{\mathbf{u}}^{n}\|
			+\|\mathbf{u}_t\|_{L^\infty(0,T;\mathbf{H}^1(\Omega))}\right)
			+\|e_{\mu}^{n+1}\|_{H^1}\|\nabla\phi^{n+1}\|+\|\mu_h^{n+1}\|_{H^1}\|\nabla e_{\phi}^{n+1}\|.
	\end{aligned}
\end{equation}
Squaring both ends of the \eqref{eq_error_ep} and multiplying the result by $\tau$, summing over $n~(0\leq n\leq m)$, and
using the Lemma \ref{Lemma_0301}, \ref{lemma_boundness_phi_mu}, \ref{lemma_truncation_errors}, \ref{lemma_eu_Hne1_norm}
and the Theorem \ref{theorem_e_phi_L2}, we can obtain the result of this lemma as follows
\begin{equation}
	\tau\sum_{n=0}^{m}\|e_p^{n+1}\|^2\leq C\left(\tau^2+h^{2r}\right).
\end{equation}
Up to this point, by utilizing the above lemmas and theorems, we have successfully arrived at the  result of Theorem \ref{theorem_error_estimates_u_phi_mu_r}.

\section{Numerical experiments}\label{section_numerical_experiments}
	In this section, we conduct three numerical experiments for the following purposes: 1) to validate the convergence rates as stated in Theorem \ref{theorem_error_estimates_u_phi_mu_r}; 2) to simulate the coarsening dynamics using a random initial phase function; and 3) to demonstrate numerically the unconditional stability of our scheme, which is only conditionally stable in theory. In all tests, we employ the finite element spaces $S^2_h \times S^2_h \times \mathbf{X}^2_h \times S^1_h$ for the phase field function, chemical potential, velocity, and pressure, respectively; in other words, we use the $P_2$ function space for both $\phi^{n+1}_h$ and $\mu^{n+1}_h$, and the $\mathbf{P}_2\times P_1$ space for $(\mathbf{u}^{n+1}_h, p^{n+1}_h)$.
	
	\subsection{Convergence tests}\label{subsection_convergence_tests}
	The first test is performed on $\Omega = [0,1]\times[0,1]$ and time $T = 0.01$, and the physical parameters are selected as $M = 0.1$, $\lambda = 0.04$, $\epsilon = 0.04$ and $\nu = 0.1$. The time-step $\tau$ is set up as $\tau = 0.1h^3$ to balance the convergence rates between time and space. The manufactured solutions are chosen from \cite{2022_ChenYaoyao_CHNS_2022_AMC} and takes the form
	\begin{equation}
		\left\{
		\begin{aligned}
			\phi(t,x,y) &= 2 + \sin(t)\cos(\pi x)\cos(\pi y),  \\
			\mathbf{u}(t,x,y) &= \left[\pi\sin(\pi x)^2\sin(2\pi y)\sin(t), -\pi\sin(\pi y)^2\sin(2\pi x)\sin(t)\right]^{\text{T}},  \\
			p(t,x,y) &= \cos(\pi x)\sin(\pi y)\sin(t),
		\end{aligned}\right.
	\end{equation}
	and the exact chemical potential $\mu(t,x,y)$ is obtained by its definition, i.e., 
	\begin{equation}
		\begin{aligned}
			\mu(t,x,y) :=& -\lambda \Delta \phi(t,x,y) + \lambda F^{\prime}\big(\phi(t,x,y)\big) \\
			=&  -\lambda \Delta \phi(t,x,y) + \frac{\lambda}{\epsilon^2} \left(\phi(t,x,y)^3 - \phi(t,x,y)\right) \\
			=&~ 2\lambda \pi^2 \cos(\pi x)\cos(\pi y) \sin(t)  \\
			&-\frac{\lambda}{\epsilon^2}\left[\cos(\pi x)\cos(\pi y) \sin(t) - \big(\cos(\pi x)\cos(\pi y) \sin(t) + 2\big)^3 + 2 \right].
		\end{aligned}
	\end{equation}
	
	For simplicity of notations, we adopt the following notations:
	\begin{equation}
		\begin{aligned}
			\|\cdot\|_{\ell^{\infty}(L^2)} := \max\limits_{0\leq n \leq N-1}\|\cdot\|_{L^2}, \quad 
			|\cdot|_{\ell^{\infty}} := \max\limits_{0\leq n \leq N-1}|\cdot|, \quad \|\cdot\|_{\ell^{2}(L^2)} := \sqrt{\tau \sum_{n=0}^{N-1}\|\cdot\|_{L^2}}.
		\end{aligned}
	\end{equation}
	Therefore, according to Theorem \ref{theorem_error_estimates_u_phi_mu_r}, the following theoretically optimal orders should be observed:
	\begin{equation}
		\begin{aligned}
			\left\|\phi - \phi_h\right\|_{\ell^{\infty}(L^2)} & \approx  \mathcal{O}(h^3), \qquad \left\|\mu - \mu_h\right\|_{\ell^2(L^2)}  \approx  \mathcal{O}(h^3), \\
			\left\|\boldsymbol{u} - \boldsymbol{u}_h\right\|_{\ell^{\infty}(L^2)} & \approx   \mathcal{O}(h^3), \qquad \left\|p - p_h\right\|_{\ell^2(L^2)}  \approx  \mathcal{O}(h^2), \\
			\left\|\nabla(\boldsymbol{u} - \boldsymbol{u}_h)\right\|_{\ell^{2}(L^2)} & \approx  \mathcal{O}(h^2).
		\end{aligned}
	\end{equation}
	\autoref{L2-error-convRates-FirstOrder} and \autoref{H1-error-convRates-FirstOrder} present the errors and convergence rates, where $L^2$ errors and convergence orders, and $H^1$ errors and convergence orders for velocity  of CHNS  are presented by spatial ones after setting $\tau = \mathcal{O}(h^3)$. 
	From those tables, we can conclude that the numerical results are consistent with the above \emph{a prior} rates, and hence the scheme proposed here can indeed obtain the optimal convergence rates.  
	\begin{table}[h]
		\centering
		\fontsize{10}{10}
		\begin{threeparttable}
			\caption{$L^2$ errors and convergence orders.}\label{L2-error-convRates-FirstOrder}
			\begin{tabular}{c|c|c|c|c|c|c}
				\toprule
				\multirow{2.5}{*}{$h$}  & \multicolumn{2}{c}{$\left\|\phi - \phi_h\right\|_{\ell^{\infty}(L^2)}$} & \multicolumn{2}{c}{$\left\|\mu - \mu_h\right\|_{\ell^2(L^2)}$}  & \multicolumn{2}{c}{$\left\|p - p_h\right\|_{\ell^2(L^2)}$} \cr
				\cmidrule(lr){2-3} \cmidrule(lr){4-5} \cmidrule(lr){6-7}  
				& error & rate & error & rate & error & rate \cr
				\midrule
				1/4   	& 8.5456e$-$05 &  -       		 & 6.5126e$-$06  & -       		     & 8.4150e$-$03 & -      \cr
				1/8   	& 1.0599e$-$05 &   3.0112  	 & 8.3148e$-$07  & 2.9695  & 1.7367e$-$03 & 3.2502 \cr
				1/16  	 & 1.2872e$-$06 &  3.0417     & 1.0217e$-$07  & 3.0247  & 1.7543e$-$04 & 3.1413 \cr
				1/32  	& 1.5815e$-$07 &  3.0249  & 1.2640e$-$08  & 3.0149  & 2.1352e$-$05 & 3.0409 \cr
				1/64  	& 1.9585e$-$08 &  3.0135  & 1.5707e$-$08  & 3.0085   & 2.6588e$-$06& 3.0074 \cr
				\bottomrule
			\end{tabular}
		\end{threeparttable}
	\end{table}
	\begin{table}[h]
		\centering
		\fontsize{10}{10}
		\begin{threeparttable}
			\caption{$L^2$- and $H^1$-norm errors and convergence orders for velocity and time convergence orders for CHNS (presented by spatial orders).}
			\label{H1-error-convRates-FirstOrder}
			\begin{tabular}{c|c|c|c|c}
				\toprule
				\multirow{2.5}{*}{$h$} & \multicolumn{2}{c}{$\left\|\mathbf{u} - \mathbf{u}_h\right\|_{\ell^{\infty}(L^2)}$} & \multicolumn{2}{c}{$\left\|\nabla(\mathbf{u} - \mathbf{u}_h)\right\|_{\ell^{2}(L^2)}$} \cr
				\cmidrule(lr){2-3}  \cmidrule(lr){4-5} 
				& error & rate  & error & rate \cr
				\midrule
				1/4   \qquad&\quad 3.4875e$-$04  & - &\quad 1.6371e$-$02 $\quad$&\quad  -       			   \cr
				1/8   \qquad& \quad7.0458e$-$05  & 2.3079 &\quad 5.1257e$-$03 $\quad$&\quad  1.6751     \cr
				1/16  \qquad& \quad8.6735e$-$06  & 3.0217 &\quad 1.2321e$-$03 $\quad$&\quad  2.0565    \cr
				1/32  \qquad& \quad1.0212e$-$06  & 3.0861 &\quad 2.9154e$-$04 $\quad$&\quad  2.0794     \cr
				1/64  \qquad& \quad1.2489e$-$07  & 3.0315&\quad 7.1441e$-$05 $\quad$&\quad  2.0289   \cr
				\bottomrule
			\end{tabular}
		\end{threeparttable}
	\end{table}
	
	\subsection{Coarsening dynamics}\label{subsection_coarsening_dynamics}
	In this example, we employ the Cahn–Hilliard–Navier–Stokes equations \eqref{eq_chns_equations} to simulate the coarsening dynamics, with a random phase function. We set the domain as $\Omega = [0, 1] \times [0, 1]$, and choose the parameters as $M = 0.01$, $\lambda = 0.02$, $\epsilon = 0.01$, and $\nu = 1$, along with a random initial condition for the phase function within the range $[-0.1, 0.1]$. The spatial discretization is set at $h = 1/64$, and the time step is $\tau = 0.001$. We conduct this test up to $T = 10$ and record the phase function snapshots at $t = 0,0.01,0.05,0.1,0.5,1,3,5,10s$, which are displayed in \autoref{Figure-CoarDyna}.
	\begin{figure}[h!]
		\centering
		\includegraphics[width=1\linewidth]{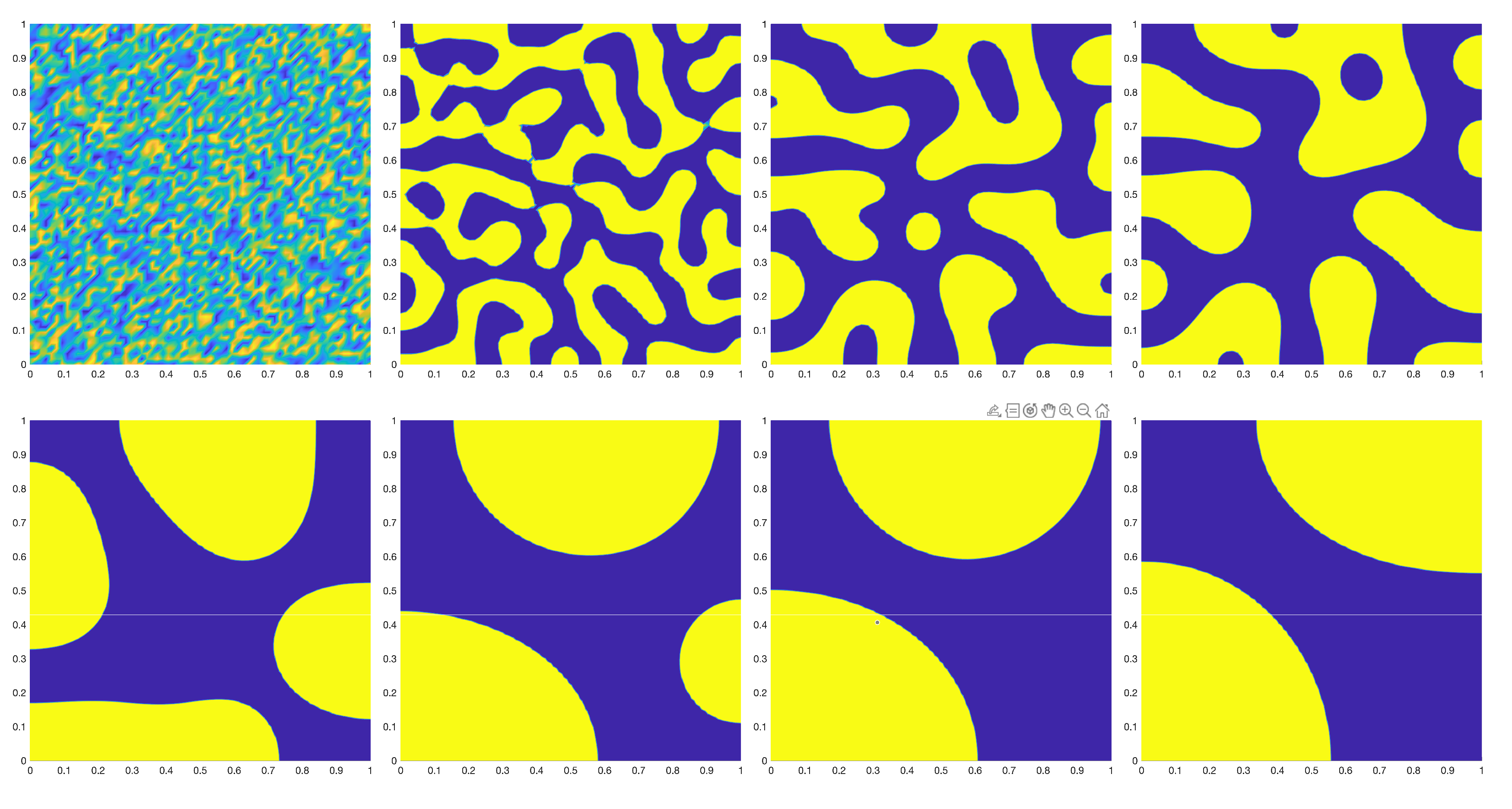}
		\caption{Snapshots of phase function at difference times from left to right row by row with $t=0,0.01,0.05,0.1,0.5,1,3,5,10s$, respectively.}\label{Figure-CoarDyna}
	\end{figure}
	\subsection{Shape Relaxation with Rotational Boundary Condition}
	In this section, we present an example of shape relaxation within the domain $\Omega = (0, 1) \times (0, 1)$, where the boundary condition is set to $\mathbf{u} = (y - 0.5, -x + 0.5)$ on $\partial\Omega$. We evaluate the performance of the scheme described by \eqref{eq_fully_discrete_CHNS_scheme_phi}-\eqref{eq_fully_discrete_CHNS_scheme_tilde_incompressible_condition} using critical phase field initial conditions. Specifically, we define $\phi^0 = 1$ within a polygonal subdomain featuring reentrant corners and $\phi^0 = -1$ in the rest of $\Omega$. The initial velocity is given by $\mathbf{u}^0 = (y - 0.5, -x + 0.5)$. This problem has been numerically studied in \cite{2008_Kay_David_and_Welford_Richard_Finite_element_approximation_of_a_Cahn_Hilliard_Navier_Stokes_system}. For our simulations, we adopt the following parameters: $M = 0.001$, $\lambda = 0.1$, $\epsilon = 0.01$, $\nu = 1$, and $T = 0.5$. We solve the problem using scheme \eqref{eq_fully_discrete_CHNS_scheme_phi}-\eqref{eq_fully_discrete_CHNS_scheme_tilde_incompressible_condition} with a $P_2$ element for $\phi_h^{n+1}$ and $\mu_h^{n+1}$, and a $P_2 \times P_1$ element for $\mathbf{u}_h^{n+1}$ and $p_h^{n+1}$. We run the simulation up to $T = 0.5$ and record the phase function snapshots at $t = 0, 0.01, 0.05, 0.08, 0.1, 0.2, 0.3, 0.5$, which are depicted in \autoref{Figure-RotationalBoundary}. Our scheme is unconditionally energy stable in this example, as shown in \autoref{Figure_CHNS_Decoupled_OptimalL2_Gao_3_RotationalBoundary_EnergyDesc}.
	\begin{figure}[h!]
		\centering
		\includegraphics[width=1\linewidth]{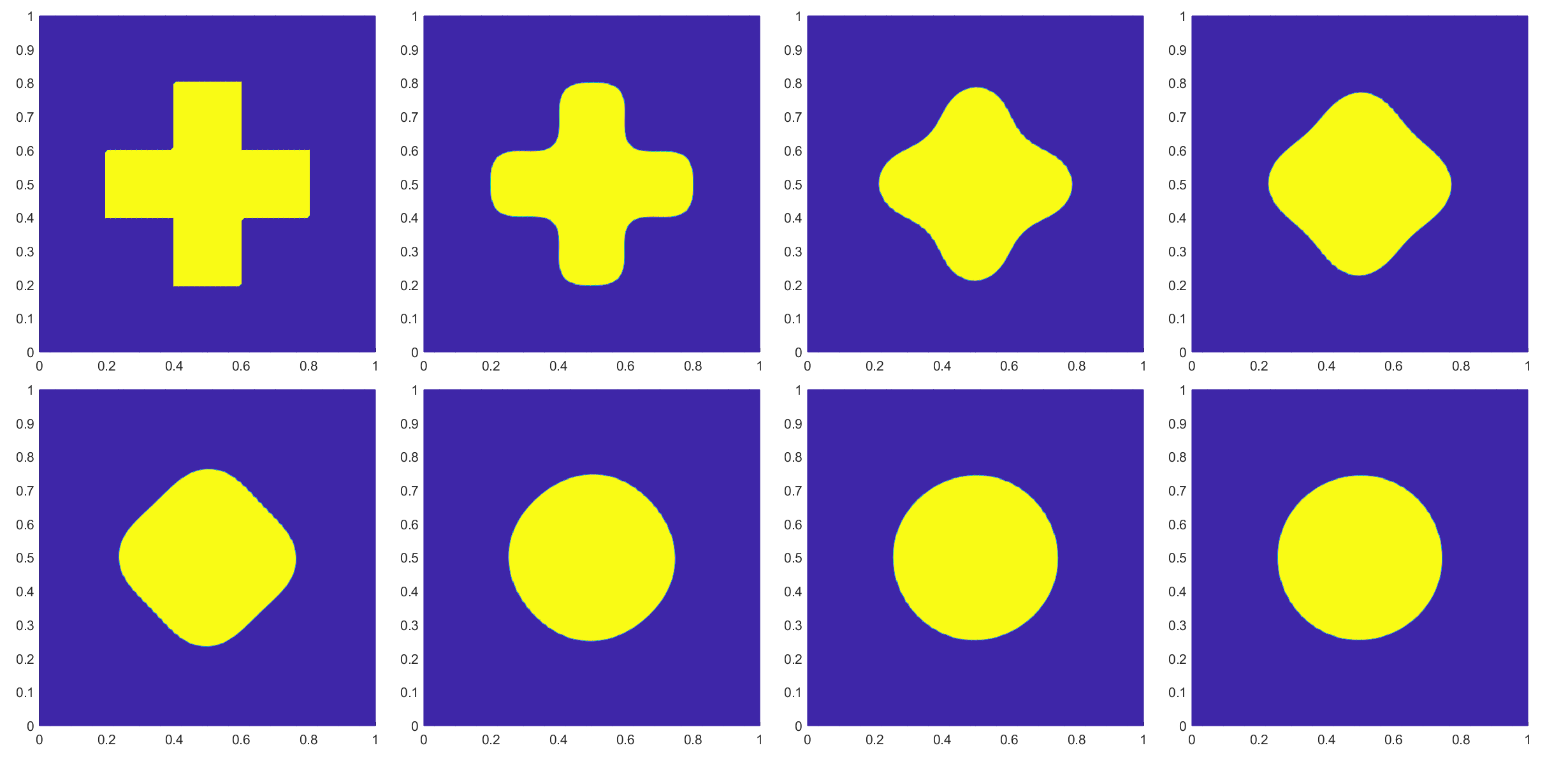}
		\caption{Snapshots of phase function at difference times from left to right row by row with $t=0, 0.01, 0.05, 0.08, 0.1, 0.2, 0.3, 0.5$, respectively.}
		\label{Figure-RotationalBoundary}
	\end{figure}
	\subsection{Comparison of time efficiency}
	In the subsection, we compare it with the following scheme  \cite{2023_CaiWentao_Optimal_L2_error_estimates_of_unconditionally_stable_finite_element_schemes_for_the_Cahn_Hilliard_Navier_Stokes_system} in the computation of the total time,
	\begin{equation}
		\begin{aligned}
			\left(\delta_{\tau}\phi_h^{n+1},w_h\right)+\left(\nabla \mu_h^{n+1},\nabla w_h\right)+b\left(\phi_h^n,\mathbf{u}_h^{n+1},w_h\right)=~&0,\\
			\left(\nabla\phi_h^{n+1},\nabla\varphi_h\right)-\left(\mu_h^{n+1},\varphi_h\right)+\left((\phi_h^{n+1})^3-\phi_h^n,\varphi_h\right)=~&0,\\
			\left(\delta_{\tau}\mathbf{u}_h^{n+1},\mathbf{v}_h\right)+\left(\nabla\mathbf{u}_h^{n+1},\nabla\mathbf{v}_h\right)+\boldsymbol{B}\left(\mathbf{u}_h^n,\mathbf{u}_h^{n+1},\mathbf{v}_h\right)-\left(p_h^{n+1},\nabla\cdot\mathbf{v}_h\right)-b\left(\phi_h^n,\mathbf{v}_h,\mu_h^{n+1}\right)=~&0,\\
			\left(\nabla\cdot\mathbf{u}_h^{n+1},q_h\right)=~&0.
		\end{aligned}
	\end{equation}
	By using the above scheme and our proposed scheme to the convergence tests of \autoref{subsection_convergence_tests}, we obtain the time results in \autoref{comparison_of_time}. The data in \autoref{comparison_of_time} shows that our algorithm takes 8.22 hour and the algorithm in \cite{2023_CaiWentao_Optimal_L2_error_estimates_of_unconditionally_stable_finite_element_schemes_for_the_Cahn_Hilliard_Navier_Stokes_system} takes 23.18 hour, which is about 3 times faster in time than the scheme of \cite{2023_CaiWentao_Optimal_L2_error_estimates_of_unconditionally_stable_finite_element_schemes_for_the_Cahn_Hilliard_Navier_Stokes_system}. Thus our scheme is more time efficient.
	\begin{table}
		\centering
		\caption{Comparison of time consumed in different schemes.}
		\label{comparison_of_time}
		\renewcommand\arraystretch{2}
		\begin{tabular}{c|c}
			\hline
			\qquad\qquad Scheme \qquad \qquad\qquad &   \qquad \qquad CPU Time (hour) \qquad\qquad\qquad \\
			\hline
			Cai et al. \cite{2023_CaiWentao_Optimal_L2_error_estimates_of_unconditionally_stable_finite_element_schemes_for_the_Cahn_Hilliard_Navier_Stokes_system}  &   23.18  \\
			\hline
			Our scheme  &   8.22  \\
			\hline
		\end{tabular}
	\end{table}
	\subsection{Lid driven cavity boundary condition}
	In the experiment, we take 
	$\Omega = [0,1 ]^2$, $\epsilon = 0.02$, $M = 0.025$, $\nu = 0.1$, $\lambda = 0.001$, boundary condition on the top is $g = [1,0]$, initial condition for velocity is $\mathbf{u}_0 = [1,0]$, $\tau = 0.001, h = 1/64$. We solve the problem using scheme \eqref{eq_fully_discrete_CHNS_scheme_phi}-\eqref{eq_fully_discrete_CHNS_scheme_tilde_incompressible_condition} with the $P_2\times P_2\times \mathbf{P}_2\times P_1$ for $\phi_h^{n+1}$, $\mu_h^{n+1}$, $\mathbf{u}_h^{n+1}$ and $p_h^{n+1}$, snapshots time $t = 0, 5, 10, 20, 30, 50$.
	The evolution of the concentration for this problem is shown through \autoref{Figure_CHNS_Decoupled_OptimalL2_CondStab_Gao3_LidDrivenBounCond} (see \cite{2002_Boyer_A_theoretical_and_numerical_model_for_the_study_of_incompressible_mixture_flows} for
	a similar problem).
	\begin{figure}[h!]
		\centering
		\includegraphics[width=0.7\linewidth]{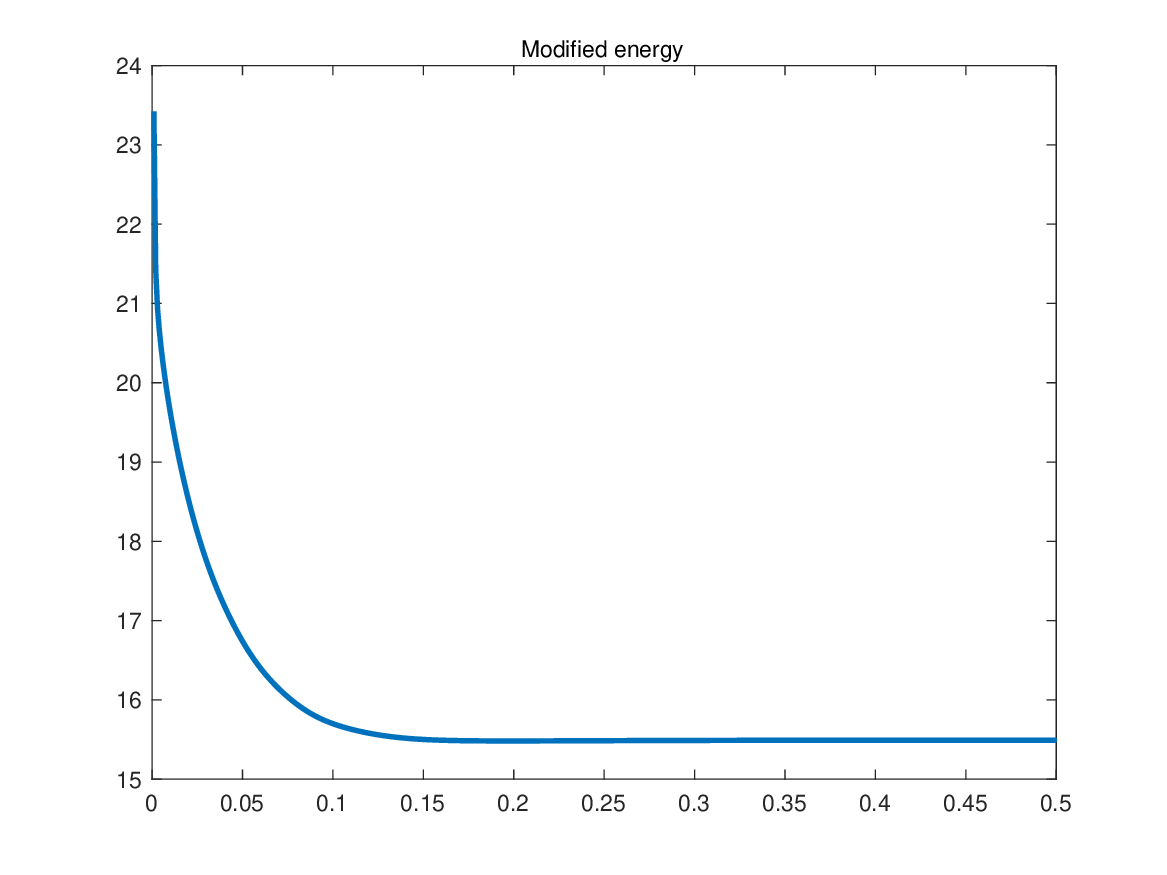}
		\caption{Energy decay of our scheme in this example.}
		\label{Figure_CHNS_Decoupled_OptimalL2_Gao_3_RotationalBoundary_EnergyDesc}
	\end{figure}
	\begin{figure}[h!]
		\centering
		\includegraphics[width=1\linewidth]{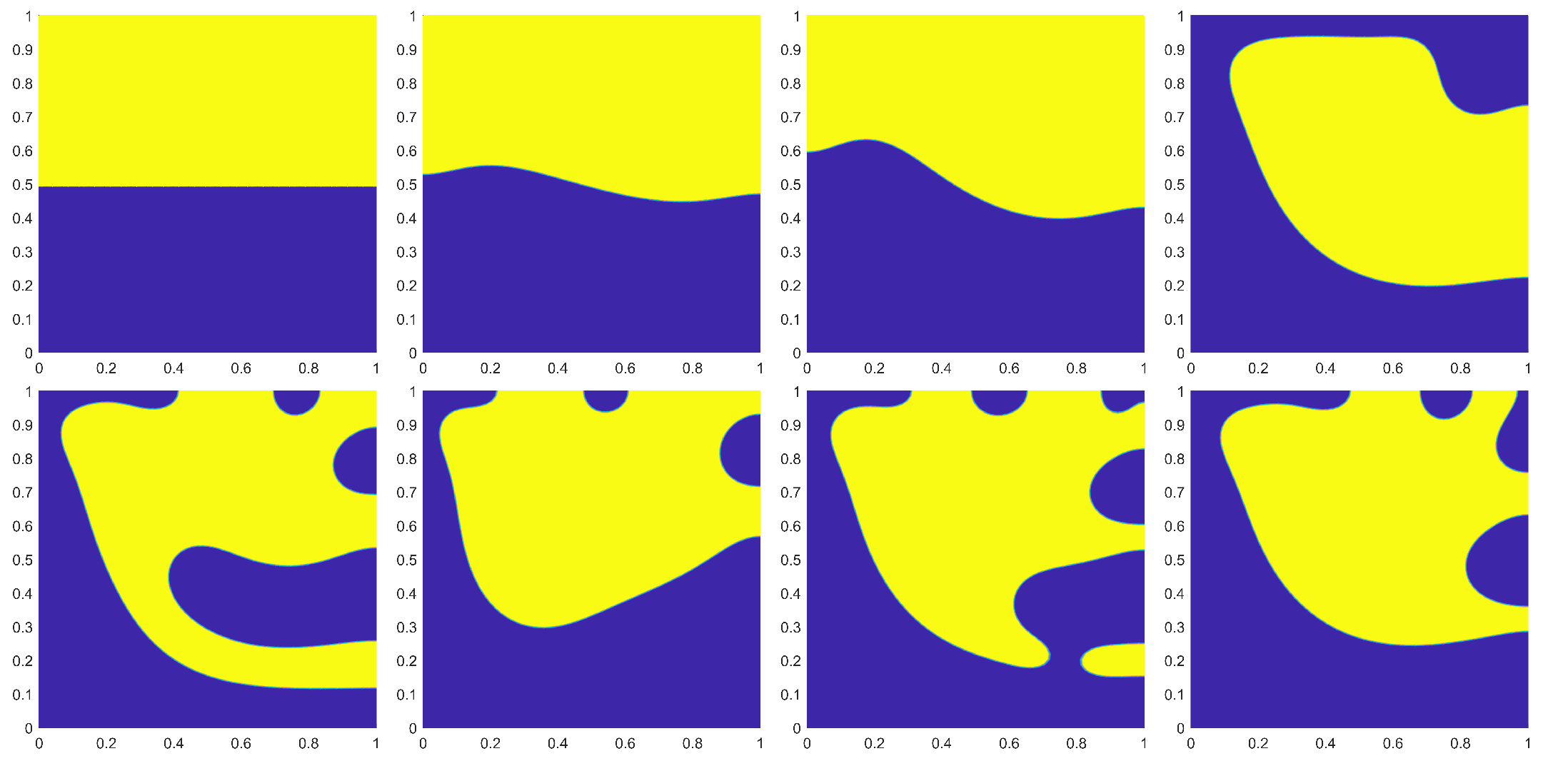}
		\caption{Evolution of the concentration with different time-step $\tau$.}\label{Figure_CHNS_Decoupled_OptimalL2_CondStab_Gao3_LidDrivenBounCond}
	\end{figure}
	
	\subsection{Verification of energy stability}
	In this final test, we aim to verify the energy stability of the new scheme. We use the same settings as those in the coarsening dynamics test (see subsection \ref{subsection_coarsening_dynamics}), but with varying time steps $\tau$. As shown in \autoref{Figure-CoarDyna-Energy-UncondStab}, our scheme demonstrates unconditional stability with respect to energy dissipation. 
	\begin{figure}[h!]
		\centering
		\includegraphics[width=0.7\linewidth]{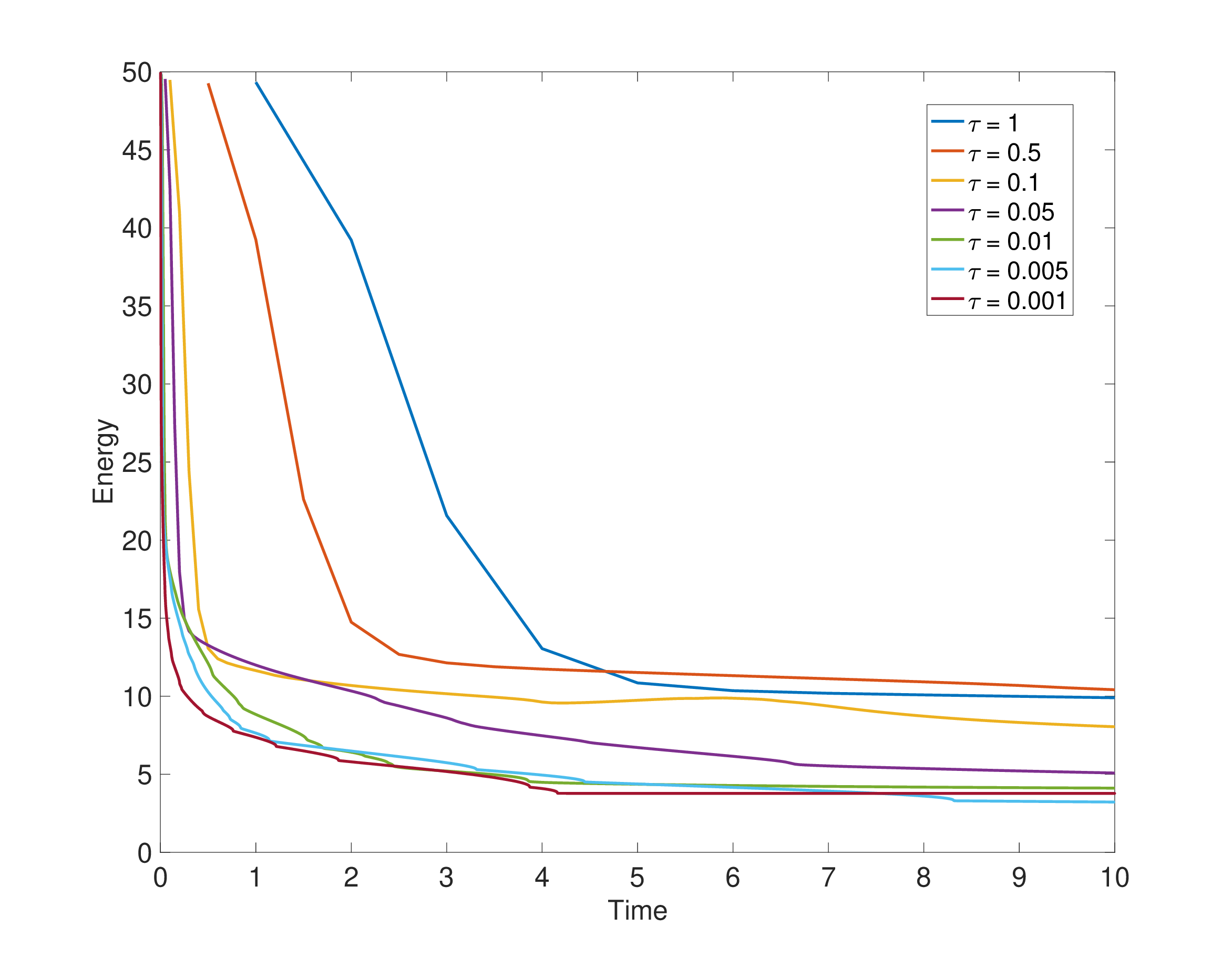}
		\caption{Evolution of the energy with different time-step $\tau$.}\label{Figure-CoarDyna-Energy-UncondStab}
	\end{figure}
	\section{Conclusions}\label{section_conclusion}
	In this work, we propose a decoupling method for the Cahn–Hilliard–Navier–Stokes (CHNS) model, a highly coupled nonlinear system. The method, based on a convex-splitting approach, is first-order, fully discrete, and energy stable, making it efficient and easy to implement. Moreover, we have conducted a rigorous error analysis for the fully discrete scheme and derived optimal error estimates for all relevant functions in the $L^2$ norm. The presented numerical experiments are designed to verify the theoretical results of our method. It has been experimentally proven that our scheme is unconditionally stable, a property that has not been theoretically proven and requires further research.
	\section*{Acknowledgments}
	This research is supported by the National Natural Science Foundation of China (No.11971337).
	\section*{Appendix}
	\subsection{Proof of Lemma \ref{lemma_solutions_boundedness}}
	\label{subsection_appendix_proof_lemma0302}
	\begin{proof}
		Firstly, we define a map $\mathcal{G}: S_h^r\times S_h^r\rightarrow S_h^r\times S_h^r$ by 
		$\mathcal{G}(\phi_h^{n+1},\mu_h^{n+1})=\left(\hat{\phi}_h^{n+1},\hat{\mu}_h^{n+1}\right)$,
		$\left(\hat{\phi}_h^{n+1},\hat{\mu}_h^{n+1}\right)\in S_h^r\times S_h^r$ satisfies
		\begin{subequations}
			\label{eq_CH_discrete_point}
			\begin{align}
				\left(\frac{\hat{\phi}_h^{n+1}-\phi_h^n}{\tau},w_h\right)-\left(\phi_h^{n+1}\mathbf{u}_h^{n},\nabla w_h\right)
				+\left(\nabla \hat{\mu}_h^{n+1},\nabla w_h\right)&=0,\\
				\left(\hat{\mu}_h^{n+1},\varphi_h\right)-\left(\nabla\hat{\phi}_h^{n+1},\nabla\varphi_h\right)-\left((\phi_h^{n+1})^3-\phi_h^n,\varphi_h\right)&=0,
			\end{align}
		\end{subequations}
		where $\left(\phi_h^{n+1},\mu_h^{n+1}\right)\in S_h^r\times S_h^r$ and $\left(w_h,\varphi_h\right)\in S_h^r\times S_h^r$. We will demonstrate that the mapping $\mathcal{G}$ fulfills the conditions of Lemma \ref{lemma_fixed_point}, and consequently, it possesses a fixed point that serves as a solution to the scheme \eqref{eq_fully_discrete_CHNS_scheme_phi}-\eqref{eq_fully_discrete_CHNS_scheme_mu}.
		
		Given $\mathbf{u}_h^n\in X_h^{r+1}$ and $\phi_h^n,\phi_h^{n+1}\in S_h^r$, the CH equation \eqref{eq_CH_discrete_point} can be rewritten as the following problem: find $\left(\hat{\phi}_h^{n+1},\hat{\mu}_h^{n+1}\right)\in S_h^r\times S_h^r$ satisfly
		\begin{equation}
			\begin{aligned}
				a(\hat{\mu}_h^{n+1},\varphi_h)+b(\hat{\phi}_h^{n+1},\varphi_h)=&~\langle f,\varphi_h\rangle,\\
				b(\hat{\mu}_h^{n+1},w_h)-c(\hat{\phi}_h^{n+1},w_h)=&~\langle g,w_h\rangle,
			\end{aligned}
		\end{equation}
		for any $\left(w_h,\varphi_h\right)\in S_h^r\times S_h^r$, where
		$$
		\begin{aligned}
			&a(\hat{\mu}_h^{n+1},\varphi_h)=\left(\hat{\mu}_h^{n+1},\varphi_h\right), ~
			b(\hat{\phi}_h^{n+1},\varphi_h)=-\left(\nabla\hat{\phi}_h^{n+1},\nabla \varphi_h\right),\\
			&c(\hat{\phi}_h^{n+1},w_h)=\frac{1}{\tau}\left(\hat{\phi}_h^{n+1},w_h\right),~
			\langle f,\varphi_h\rangle=\left((\phi_h^{n+1})^3-\phi_h^n,\varphi_h\right),\\
			&\langle g,w_h\rangle=-\frac{1}{\tau}\left(\phi_h^n,w_h\right)-\left(\phi_h^{n+1}\mathbf{u}_h^n,\nabla w_h\right).
		\end{aligned}
		$$
		We denote the $S_{0h}^r=\{\varphi_h\in S_h^r: b(\varphi_h,w_h)=0,\forall w_h\in S_h^r\}$. For any $\varphi_h\in S_{0h}^r$, it holds 
		$ \|\nabla \varphi_h\|=0$ and $a(\cdot,\cdot)$ is coercive on $S_{0h}^r$. Moreover, we can easily show that 
		$a(\cdot,\cdot)$ and $b(\cdot,\cdot)$ are continuous, and $c(\cdot,\cdot)$ is continuous, symmetric and positive semi-definite. Thus, according to the Section II.1.2 of \cite{1991_Brezzi_Fortin_Mixed_and_hybrid_finite_element_methods}, given  $\mathbf{u}_h^n\in \mathbf{X}_h^{r+1}$ and $\phi_h^n,\phi_h^{n+1}\in S_h^r$, the problem \eqref{eq_CH_discrete_point} is well-posed.
		Furthermore, because the spaces $S_h^r\times S_h^r$ are finite dimensional sapces, it holds that $\mathcal{G}$ is a compact map.
		
		Next, we give the boundedness of $\left(\hat{\phi}_h^{n+1},\hat{\mu}_h^{n+1}\right)$ in $S_h^r\times S_h^r$ as follows:
		\begin{equation}\label{eq_H1_phi_mu_boundedness}
			\|\hat{\phi}_h^{n+1}\|_{H^1}+\|\hat{\mu}_h^{n+1}\|_{H^1}\leq C,
		\end{equation}
		for some $C$ that is a positive constant independent of $\alpha$ and  $\left(\hat{\phi}_h^{n+1},\hat{\mu}_h^{n+1}\right)$.
		Letting $\mathcal{G}(\hat{\phi}_h^{n+1},\hat{\mu}_h^{n+1})=\frac{1}{\alpha}(\hat{\phi}_h^{n+1},\hat{\mu}_h^{n+1})$,
		we have
		\begin{subequations}
			\label{eq_ch_hat_alpha0011}
			\begin{flalign}
				\left(\frac{\hat{\phi}_h^{n+1}-\alpha\phi_h^n}{\tau},w_h\right)-\alpha\left(\hat{\phi}_h^{n+1}\mathbf{u}_h^n,\nabla w_h\right)+\left(\nabla \hat{\mu}_h^{n+1},\nabla w_h\right)&~=0,\\
				\left(\hat{\mu}_h^{n+1},\varphi_h\right)-\left(\nabla\hat{\phi}_h^{n+1},\nabla\varphi_h\right)-\alpha\left((\hat{\phi}_h^{n+1})^3-\phi_h^n,\varphi_h\right)&~=0,
			\end{flalign}
		\end{subequations}
		for any $\left(w_h,\varphi_h\right)\in S_h^r\times S_h^r$. Taking $\left(w_h,\varphi_h\right)=2\left(\tau \hat{\mu}_h^{n+1},-\hat{\phi}_h^{n+1}+\alpha\phi_h^n\right)$ into \eqref{eq_ch_hat_alpha0011}, and summing the obtained equalities, we get
		\begin{equation}
			\label{eq_hat_phi_nable_mu00001}
			\begin{aligned}
				&2\tau\|\nabla\hat{\mu}_h^{n+1}\|^2+\left(\|\nabla\hat{\phi}_h^{n+1}\|^2-\alpha^2\|\nabla\phi_h^n\|^2+\|\nabla(\hat{\phi}_h^{n+1}-\alpha\phi_h^n)\|^2\right)\\
				&\frac{\alpha}{2}\left(\|(\hat{\phi}_h^n)^2-1\|^2-\|(\alpha\phi_h^n)^2-1\|^2+\|(\hat{\phi}_h^{n+1})^2-(\alpha\phi_h^n)^2\|\right)\\
				&\alpha\left(\|\hat{\phi}_h^{n+1}(\hat{\phi}_h^{n+1}-\alpha\phi_h^n)\|^2+\|\hat{\phi}_h^{n+1}-\alpha\phi_h^n\|^2\right)\\
				\leq&~ 2\tau\left(\hat{\phi}_h^{n+1}\mathbf{u}_h^n,\nabla\hat{\mu}_h^{n+1}\right)+2\alpha(1-\alpha)\left(\phi_h^n,\hat{\phi}_h^{n+1}-\alpha\phi_h^n\right)\\
				\leq &~C\tau\|\hat{\phi}_h^{n+1}\|\|\mathbf{u}_h^n\|_{L^\infty}\|\nabla\hat{\mu}_h^{n+1}\|+\frac{\alpha}{4}\|\hat{\phi}_h^{n+1}-\alpha\phi_h^n\|^2+4\alpha(1-\alpha)^2\|\phi_h^n\|^2\\
				\leq &~C\tau\|\mathbf{u}_h^n\|_{L^\infty}^2\|\hat{\phi}_h^{n+1}-\alpha\phi_h^n+\alpha\phi_h^n\|^2+\tau\|\nabla\hat{\mu}_h^{n+1}\|^2\\
				&~+\frac{\alpha}{4}\|\hat{\phi}_h^{n+1}-\alpha\phi_h^n\|^2+4\alpha(1-\alpha)^2\|\phi_h^n\|^2\\
				\leq &~\frac{\alpha}{4}\|\hat{\phi}_h^{n+1}-\alpha\phi_h^n\|^2+\frac{\alpha^2}{2}\|\phi_h^n\|^2+\tau\|\nabla\hat{\mu}_h^{n+1}\|^2\\
				&~+\frac{\alpha}{4}\|\hat{\phi}_h^{n+1}-\alpha\phi_h^n\|^2+4\alpha(1-\alpha)^2\|\phi_h^n\|^2.
			\end{aligned}
		\end{equation}
		where we used the following assumption
		\begin{equation}
			\label{eq_assumption_conditions}
			\tau\leq \frac{\alpha}{C\|\mathbf{u}_h^{n}\|_{L^{\infty}}^2}.
		\end{equation}	
		\begin{Remark}
			According to the assumption $\tau\leq \frac{\alpha}{C\|\mathbf{u}_h^n\|_{L^{\infty}}^2}$, the above inequality \eqref{eq_hat_phi_nable_mu00001} holds. Therefore, we can derive the boundedness of the nth-time-layer velocity as follows:
			\begin{equation}
				\label{eq_Linfty_u_boundedness}
				\|\mathbf{u}_h^n\|_{L^{\infty}}\leq C\|\mathbf{u}\|_{L^{\infty}(0,T;\mathbf{L}^{\infty}(\Omega))}\leq C,
			\end{equation}
			where $C$ is a positive constant independent of $\alpha$, $\tau$, $h$, and $n$.
			 In this paper, we will analyze the reasonableness of this condition \eqref{eq_assumption_conditions}.
		\end{Remark}
		By from \eqref{eq_hat_phi_nable_mu00001} and $\alpha\in [0,1]$, we have
		\begin{equation}
			\begin{aligned}
				&~\tau\|\nabla\hat{\mu}_h^{n+1}\|^2+\|\nabla\hat{\phi}_h^{n+1}\|^2+\|\nabla(\hat{\phi}_h^{n+1}-\alpha\phi_h^n)\|^2+\frac{\alpha}{2}\|(\hat{\phi}_h^{n+1})^2-1\|^2\\
				+&~\alpha\left(\frac{1}{2}\|(\hat{\phi}_h^{n+1})^2-(\alpha\phi_h^n)^2\|^2+\|\hat{\phi}_h^{n+1}(\hat{\phi}_h^{n+1}-\alpha\phi_h^n)\|^2+\frac{1}{2}\|\hat{\phi}_h^{n+1}-\alpha\phi_h^n\|^2\right)\\
				\leq &~2\|\phi_h^n\|^2+\|\nabla\phi_h^n\|^2+\|(\phi_h^n)^2-1\|^2\leq C,
			\end{aligned}
		\end{equation}
		where $C$ is a positive contant independent of $\left(\hat{\phi}_h^{n+1},\hat{\mu}_h^{n+1}\right)$ and $\alpha$.
		
		Setting $\left(w_h,\varphi_h\right)=\left(2\tau\hat{\phi}_h^{n+1},2\tau\hat{\mu}_h^{n+1}\right)$ in \eqref{eq_ch_hat_alpha0011} and adding the obtained equalities, we get
		\begin{equation}
			\label{eq_ch_hat_phi_mu_0001}
			\begin{aligned}
				&\|\hat{\phi}_h^{n+1}\|^2-\alpha^2\|\phi_h^n\|^2+\|\hat{\phi}_h^{n+1}-\alpha\phi_h^n\|^2+2\tau\|\nabla\hat{\mu}_h^{n+1}\|^2\\
				=&~2\tau\alpha\left(\hat{\phi}_h^{n+1}\mathbf{u}_h^n,\nabla\hat{\phi}_h^{n+1}\right)+2\alpha\tau\left((\hat{\phi}_h^{n+1})^3-\phi_h^n,\hat{\mu}_h^{n+1}\right).
			\end{aligned}
		\end{equation}
		Thus, according to the following inequality 
		$$
		(\hat{\phi}_h^{n+1})^4=\left((\hat{\phi}_h^{n+1})^2-1\right)^2+2\left(\hat{\phi}_h^{n+1}-\alpha\phi_h^n\right)^2+4\alpha\left(\hat{\phi}_h^{n+1}-\alpha\phi_h^n\right)\phi_h^n-1+2\alpha^2(\phi_h^n)^2,
		$$
		we have
		\begin{equation}
			\label{eq_ch_hat_phi_mu_boundedness_0001right001}
			\begin{aligned}
				2\tau\alpha\left(\hat{\phi}_h^{n+1}\mathbf{u}_h^n,\nabla\hat{\phi}_h^{n+1}\right)
				\leq 2\tau\alpha\|\hat{\phi}_h^{n+1}\|_{L^4}\|\mathbf{u}_h^n\|_{L^4}\|\nabla\hat{\phi}_h^{n+1}\|,
			\end{aligned}
		\end{equation}
		and
		\begin{equation}
			\label{eq_ch_hat_phi_mu_boundedness_0001right002}
			\begin{aligned}
				&2\alpha\tau\left((\hat{\phi}_h^{n+1})^3-\phi_h^n,\hat{\mu}_h^{n+1}\right)
				\leq 2\tau\alpha\left(\|\hat{\phi}_h^{n+1}\|_{L^4}^3\|\hat{\mu}_h^{n+1}\|_{L^4}+\|\phi_h^n\|\|\hat{\mu}_h^{n+1}\|\right)\\
				\leq &~\tau\left(\|\hat{\mu}_h^{n+1}\|^2+\|\nabla\hat{\mu}_h^{n+1}\|^2\right)+C_0\alpha\tau\left(\|(\hat{\phi}_h^{n+1})^2-1\|^2+\|\hat{\phi}_h^{n+1}-\alpha\phi_h^n\|^2\right)^{\frac{3}{2}}\\
				&~+C_0\tau\alpha\left(\alpha^3\|\phi_h^n\|^3+\|\phi_h^n\|^2+C\right).
			\end{aligned}
		\end{equation}
		Combining the \eqref{eq_ch_hat_phi_mu_boundedness_0001right001},\eqref{eq_ch_hat_phi_mu_boundedness_0001right002} with \eqref{eq_ch_hat_phi_mu_0001}, we deduce the \eqref{eq_H1_phi_mu_boundedness}.
	\end{proof}	
	\subsection{Proof of Lemma \ref{lemma_unique_solvability}}
	\label{subsection_appendix_proof_lemma0303}
	\begin{proof}
		To prove the existence of a unique solution for the fully discrete scheme \eqref{eq_fully_discrete_CHNS_scheme_phi}-\eqref{eq_fully_discrete_CHNS_scheme_tilde_incompressible_condition}, we suppose the $\left(\phi_{h1}^{n+1},\mu_{h1}^{n+1}\right)$ and $\left(\phi_{h2}^{n+1},\mu_{h2}^{n+1}\right)$ are two solutions of the  fully discrete scheme \eqref{eq_fully_discrete_CHNS_scheme_phi}-\eqref{eq_fully_discrete_CHNS_scheme_mu},
		and denote
		$$
		\bar{\phi}_h^{n+1}=\phi_{h1}^{n+1}-\phi_{h2}^{n+1},\bar{\mu}_h^{n+1}=\mu_{h1}^{n+1}-\mu_{h2}^{n+1}.
		$$
		By according to \eqref{eq_fully_discrete_CHNS_scheme_phi}-\eqref{eq_fully_discrete_CHNS_scheme_mu}, we obtain
		\begin{subequations}
			\label{eq_bar_phi_mu00001}
			\begin{flalign}
				\label{eq_bar_phi_mu00001_phi}
				\left(\frac{\bar{\phi}_h^{n+1}}{\tau},w_h\right)-\left(\bar{\phi}_h^{n+1}\mathbf{u}_h^n,\nabla w_h\right)+\left(\nabla\bar{\mu}_h^{n+1},\nabla w_h\right)&~=0,\\
				\label{eq_bar_phi_mu00001_mu}
				\left(\bar{\mu}_h^{n+1},\varphi_h\right)-\left(\nabla\bar{\phi}_h^{n+1},\nabla\varphi_h\right)-\left(\bar{\phi}_h^{n+1}\left((\phi_{h1}^{n+1})^2+\phi_{h1}^{n+1}\phi_{h2}^{n+1}+(\phi_{h2}^{n+1})^2\right),\varphi_h\right)&~=0.
			\end{flalign}
		\end{subequations}
		Letting $\left(w_h,\varphi_h\right)=\left(\tau\bar{\mu}_h^{n+1},\bar{\phi}_h^{n+1}\right)$
		in \eqref{eq_bar_phi_mu00001} and combining the obtained results, we have
		\begin{equation}
			\label{eq_takeinnerproduct_muphibar_combine0001}
			\begin{aligned}
				\tau\|\nabla\bar{\mu}_h^{n+1}\|^2+&~\|\nabla\bar{\phi}_h^{n+1}\|^2+\left((\phi_{h1}^{n+1})^2+\phi_{h1}^{n+1}\phi_{h2}^{n+1}+(\phi_{h2}^{n+1})^2,(\bar{\phi}_h^{n+1})^2\right)\\
				=&~\tau\left(\bar{\phi}_h^{n+1}\mathbf{u}_h^n,\nabla \bar{\mu}_h^{n+1}\right).
			\end{aligned}
		\end{equation}
		It's easy to see that
		\begin{equation}
			\begin{aligned}
				&\left((\phi_{h1}^{n+1})^2+\phi_{h1}^{n+1}\phi_{h2}^{n+1}+(\phi_{h2}^{n+1})^2,(\bar{\phi}_h^{n+1})^2\right)\\
				&\qquad=\left(\frac{(\phi_{h1}^{n+1})^2+(\phi_{h2}^{n+1})^2}{2}+\frac{(\phi_{h1}^{n+1}+\phi_{h2}^{n+1})^2}{2},(\bar{\phi}_h^{n+1})^2\right)\geq 0.
			\end{aligned}
		\end{equation}
		According to $\tau\leq \frac{C}{\|\mathbf{u}_h^n\|_{L^{\infty}}^2}$, we obtain
		\begin{equation}
			\label{eq_boundedness_barPhi_u_mu0001}
			\begin{aligned}
				\tau\left(\bar{\phi}_h^{n+1}\mathbf{u}_h^n,\nabla \bar{\mu}_h^{n+1}\right)\leq&~ \|\bar{\phi}_h^{n+1}\|\|\mathbf{u}_h^n\|_{L^{\infty}}\|\nabla\bar{\mu}_h^{n+1}\|\\
				\leq&~C\tau\|\mathbf{u}_h^n\|_{L^{\infty}}^2\|\bar{\phi}_h^{n+1}\|^2+\frac{\tau}{2}\|\nabla\bar{\mu}_h^{n+1}\|^2\\
				\leq&~\frac{1}{2}\|\nabla\bar{\phi}_h^{n+1}\|^2+\frac{\tau}{2}\|\nabla\bar{\mu}_h^{n+1}\|^2.
			\end{aligned}
		\end{equation}
		It follows from \eqref{eq_takeinnerproduct_muphibar_combine0001} that
		\begin{equation}
			\|\nabla\bar{\mu}_h^{n+1}\|^2+\|\nabla\bar{\phi}_h^{n+1}\|^2=0.
		\end{equation}
		Therefore, the discrete scheme \eqref{eq_fully_discrete_CHNS_scheme_phi}-\eqref{eq_fully_discrete_CHNS_scheme_boundary_conditions} has a unique solution. 
		The uniqueness of the solutions $\left(\mathbf{u}_h^{n+1},p_h^{n+1}\right)$ for the discrete scheme \eqref{eq_fully_discrete_CHNS_scheme_tilde_u}-\eqref{eq_fully_discrete_CHNS_scheme_tilde_incompressible_condition} is a standard result. Hence, the existence and uniqueness of the solutions for the entire fully discrete scheme \eqref{eq_fully_discrete_CHNS_scheme_phi}-\eqref{eq_fully_discrete_CHNS_scheme_tilde_incompressible_condition} are proved.
	\end{proof}
	\bibliographystyle{unsrt}
	\bibliography{./../document/bibfile}
\end{document}